\def\F{\mathbb{F}}
\def\C{\mathrm{C}}
\def\L{\mathcal{L}}
\def\K{\mathcal{K}}
\def\A{\mathcal{A}}
\def\S{\mathcal{S}}
\def\P{\mathcal{P}}
\def\B{\mathcal{B}}
\def\L{\mathcal{L}}
\def\V{\mathrm{V}}
\DeclareMathOperator{\supp}{supp}
\DeclareMathOperator{\PG}{PG}
\DeclareMathOperator{\PGL}{PGL}
\renewcommand{\c}{{\bf c}}
\renewcommand{\v}{{\bf v}}
\newtheorem{theorem}{Theorem}[section]
\newtheorem{lemma}[theorem]{Lemma}
\newtheorem{result}[theorem]{Result}
\newtheorem{proposition}[theorem]{Proposition}
\newtheorem{corollary}[theorem]{Corollary}
\newtheorem{notation}[theorem]{Notation}
\newtheorem{example}[theorem]{Example}
\theoremstyle{definition}
\newtheorem{definition}[theorem]{Definition}
\newtheorem{remark}[theorem]{Remark}
\newtheoremstyle{dotless}{}{}{\itshape}{}{\bfseries}{}{ }{}
\theoremstyle{dotless}
\newtheorem*{mtheorem}{Main Theorem}
\newcommand{\comments}[1]{}
\author{Maarten De Boeck\thanks{This author is supported by the Croatian Science Foundation under the project 5713.} \and Geertrui Van de Voorde\thanks{This author is supported by the Marsden Fund Council administered by the Royal Society of New Zealand (MFP-UOC1805).}}
\title{Embedded antipodal planes and the minimum weight of the dual code of points and lines in projective planes of order $p^2$}
\date{}
\begin{document}
\maketitle
\begin{abstract} The minimum weight of the code generated by the incidence matrix of points versus lines in a projective plane has been known for over 50 years. Surprisingly, finding the minimum weight of the dual code of projective planes of non-prime order is still an open problem, even in the Desarguesian case. 

In this paper, we focus on the case of projective planes of order $p^2$, where $p$ is prime, and we link the existence of small weight code words in the dual code to the existence of embedded subplanes and {\em antipodal planes}. In the Desarguesian case, we can exclude such code words by showing a more general result that no antipodal plane of order at least $3$ can be embedded in a Desarguesian projective plane.

Furthermore, we use combinatorial arguments to rule out the existence of code words in the dual code of points and lines of an arbitrary projective plane of order $p^2$, $p$ prime, of weight at most $2p^2-2p+4$ using more than two symbols. In particular, this leads to the result that the dual code of the Desarguesian projective plane $\PG(2,p^2)$, $p\geq 5$, has minimum weight at least $2p^2-2p+5$.
\end{abstract}

\noindent
{\bf Keywords: projective plane, antipodal plane,(dual) code of projective plane, minimum weight}\\
{\bf MSC2020: 51E22, 51A45,94B05}

\section{Introduction} 

Let $\V(n,p)$ denote the $n$-dimensional vector space over the finite field $\F_p$, where $p$ is prime. A {\em $p$-ary linear code $\C$} is a $k$-dimensional subspace $\C$ of $\V(n,p)$. The code $\C$ is said to have {\em distance} $d$ if $d$ is the smallest (Hamming) distance between two vectors (called \emph{code words}) of $\C$. For any code $\C$, the orthogonal subspace (with respect to the standard dot product of $\V(n,p)$) is also a $p$-ary linear code, called the \emph{dual code} and denoted by $\C^\bot$. The study of the parameters of the code of projective planes dates back to the 1960's.

Throughout this paper, we denote the Desarguesian projective plane of order $q$, $q=p^h$, $p$ prime, by $\PG(2,q)$. 
Let $\Pi$ be a (not necessarily Desarguesian) projective plane of order $q$, $q=p^h$, where $p$ is prime. An incidence matrix $A = (a_{ij})$ of points and lines of $\Pi$ is a matrix whose rows and columns are indexed by the lines and points of $\Pi$, respectively, and with entry
\[
	a_{ij} =
	\begin{cases}
		1 & \text{if point $j$ belongs to line $i$,}\\
		0 & \text{otherwise.}
	\end{cases} 
\]
The {\em $p$-ary code} of $\Pi$ is the $\mathbb{F}_p$-span of the rows of the chosen incidence matrix $A$ with a fixed ordering of the points; the code is a subspace of $\V(q^2+q+1,p)$. Throughout this paper, we always consider the $p$-ary code of $\Pi$, for the prime $p$ such that $q=p^h$, and we denote this code by $\C(\Pi)$, or in the case that $\Pi=\PG(2,q)$, by $\C(2,q)$. The {\em support} of a code word $\c$, denoted by $supp(\c)$, is the set of all non-zero positions of the vector $\c$. Since every position corresponds to a point of $\Pi$, we consider $supp(\c)$ as a set of points. The {\em weight} of $\c$, denoted by $w(\c)$, is the size of $supp(\c)$. It is easy to see that the minimum (non-zero) weight and the minimum (non-zero) distance coincide since $\C(\Pi)$ is linear. We let $(\c_1,\c_2)$ denote the dot product in $\mathbb{F}_p$ of two code words $\c_1, \c _2$ of $\C(\Pi)$. Furthermore, we identify a set of points $\S$ of $\Pi$ with its incidence vector. The dual code of $\C(\Pi)$, denoted by $\C(\Pi)^\bot$ (or $\C(2,q)^\bot$ if $\Pi=\PG(2,q)$), is the set of all vectors orthogonal to all code words of $\C(\Pi)$. So,
$$\C(\Pi)^\bot=\{ \v\in \V(q^2+q+1,p) \mid (\v,\c)= 0,\ \forall \c\in \C(\Pi)\}.$$
It is easy to see that $\c\in\C(\Pi)^{\bot}$ if and only if $(\c,\ell) = 0$ for all lines $\ell$ of $\Pi$, using the conventions introduced above.

For most projective planes, finding the dimension of $\C(\Pi)$ is still an open problem. It has been proven that the dimension of $\C(2,p^{h})$ is equal to $\binom{p+1}{2}^{h}+1$ (see e.g. \cite[Theorem 5.7.1]{AK}), and the Hamada-Sachar conjecture states that for any projective plane $\Pi$ of order $p^{h}$, the code $\C(\Pi)$ has dimension at least $\binom{p+1}{2}^{h}+1$. This conjecture has been verified for $h=1$ (see \cite{sachar}) and for some small values of $q$ in case of translation planes (see \cite{AK} and \cite[Section 3]{keymac}).

The minimum weight of $\C(\Pi)$, as well as the nature of the minimum weight code words, can be easily deduced in a geometric way:
\begin{result}(see e.g. \cite[Theorem 6.3.1]{AK}) 
	Let $\Pi$ be a projective plane of order $n$. The minimum weight of $\C(\Pi)$ is $n+1$ and the minimum weight code words are the scalar multiples of the incidence vectors of the lines of $\Pi$. 
\end{result}
The situation for $\C(\Pi)^\bot$ is more complicated: its minimum weight remains unknown in general, even for the Desarguesian case. We have the following result for the Desarguesian case; considering the difference of two lines shows that the upper bound is valid for any projective plane.

\begin{result} (see e.g. \cite[Theorem 6.4.2]{AK})\label{res:ak} The minimum weight $d^\perp$ of $\C(2,q)^\perp$ satisfies
$$q+p\leq d^\perp\leq 2q,$$ with equality in the lower bound for $q$ even.
\end{result}

It follows that the minimum weight of $\C(2,p)^\bot$, $p$ prime, is $2p$, and the minimum weight of $\C(2,2^h)^\bot$ is $2^h+2$. Apart from these two cases, only the cases $q=9$ \cite{p3} (where $d^\perp=15$) and $q=25$ \cite[Corollary 1.2]{p5} (where $d^\perp=45$) have been fully settled. For $q=49$, it is only known that $88\leq d^\perp\leq 91$ \cite{p7}.

For arbitrary projective planes, we have the following result.
\begin{result}\label{bag}\cite[Theorem 2, adapted here for $\C(\Pi)$]{bagchi} Let $\Pi$ be a projective plane of order $q=p^h$, $p$ prime. The minimum weight $d^\perp$ of $\C(\Pi)^\perp$ is at least
$$2\left(q+1-\frac{q}{p}\right)$$
and in case of equality, the incidence system induced by $\Pi$ on the support of a code word $\c$ of minimum weight is of the form $D_1*D_2$ where $D_1$ and $D_2$ are disjoint $2-(q+1-\frac{q}{p},p,1)$-designs embedded in $\Pi$. (A $2-(q+1-\frac{q}{p},p,1)$-design is a point-line incidence structure with $q+1-\frac{q}{p}$ points such that any two distinct points are incident with exactly one line and every line is incident with $p$ points. Furthermore, $D_1*D_2$ denotes the {\em join} of $D_1$ and $D_2$, which has as point set $D_1\cup D_2$, and as line set (i) the lines of $D_1$ and $D_2$ and (ii) the lines $\{P,Q\}$ where $P\in D_1$ and $Q\in D_2$). Furthermore, only two symbols, $\lambda$ and $-\lambda$, are used in $\c$ and the point sets of $D_i$, $i=1,2$ each consist of the set of positions on which $\c$ has a fixed symbol, either $\lambda$ or $-\lambda$.
\end{result}
In this paper, we will focus on the case $q=p^2$, $p$ prime. Recall from Result \ref{res:ak} that the minimum weight of $C(2,2^2)$ is $6$, so we consider the case $p\geq 3$. Result \ref{bag} shows that when $\Pi$ has order $p^2$, the minimum weight $d^\perp$ of $\C(\Pi)^\perp$ is at least $2p^2-2p+2$ and in case of equality, the support of a code word of minimum weight defines two embedded $2-(p^2-p+1,p,1)$-designs.

A $2-(p^2-p+1,p,1)$-design with $p\geq 3$ is precisely a projective plane of order $p-1$. In the case that $\Pi=\PG(2,p^2)$, we know that the only proper subplanes of $\PG(2,p^2)$ have order $p$. It follows that the lower bound in Result \ref{bag} is not sharp in this case:

\begin{corollary}\label{cordesmin}
	The minimum weight $d^\perp$ of $\C(2,p^2)^\perp$, $p\geq 3$, is at least $2p^2-2p+3$.
\end{corollary}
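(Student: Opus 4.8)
The plan is to show that the lower bound $2p^2-2p+2$ provided by Result~\ref{bag} cannot actually be attained in the Desarguesian plane, so that the genuine minimum weight must exceed it by at least one. I would therefore argue by contradiction: assume that $\C(2,p^2)^\perp$ contains a code word $\c$ of weight exactly $2p^2-2p+2$, and derive from it a configuration that is known not to exist in $\PG(2,p^2)$.

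First I would invoke the equality clause of Result~\ref{bag}. Substituting $q=p^2$ gives $q+1-\frac{q}{p}=p^2-p+1$, so the incidence structure induced on $\supp(\c)$ is a join $D_1*D_2$ of two disjoint embedded $2$-$(p^2-p+1,p,1)$-designs, where $D_1$ and $D_2$ are the positions carrying the symbols $\lambda$ and $-\lambda$ respectively. As recorded in the text preceding the statement, a $2$-$(p^2-p+1,p,1)$-design with $p\geq 3$ is precisely a projective plane of order $p-1$, so each $D_i$ is an abstract projective plane of order $p-1$ sitting inside $\PG(2,p^2)$.

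The crucial step is to upgrade ``embedded design'' to ``genuine subplane''. For this I would use that any two points of $D_1$ lie on a unique line of $\PG(2,p^2)$ and on a unique line of the design $D_1$; since two distinct points determine a unique line of $\PG(2,p^2)$, the design-line through them is carried by that same line of $\PG(2,p^2)$, which consequently contains exactly $p$ points of $D_1$. Hence the lines of $D_1$ are exactly the lines of $\PG(2,p^2)$ meeting $D_1$ in $p$ points, and $D_1$ is a true subplane of $\PG(2,p^2)$ of order $p-1$; the same applies to $D_2$.

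It then remains to quote the classification of subplanes of the Desarguesian plane: the only proper subplanes of $\PG(2,p^2)$ are the Baer subplanes, of order $p$. Since $p\geq 3$ forces $p-1\neq p$, a subplane of order $p-1$ cannot exist, contradicting the previous step. Thus no code word of weight $2p^2-2p+2$ exists, and combined with Result~\ref{bag} this yields $d^\perp\geq 2p^2-2p+3$. The only non-routine point is the identification of the embedded design with an actual subplane; everything else reduces to substituting $q=p^2$ into Result~\ref{bag} and citing the known structure of subplanes of $\PG(2,p^2)$, so I expect the argument to be short.
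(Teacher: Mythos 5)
Your proposal is correct and takes essentially the same route as the paper: the paper also derives the corollary from the equality clause of Result \ref{bag}, noting that a $2$-$(p^2-p+1,p,1)$-design with $p\geq 3$ is a projective plane of order $p-1$ which would be embedded in $\PG(2,p^2)$, and rules this out because the only proper subplanes of $\PG(2,p^2)$ have order $p$. Your explicit step upgrading the induced design to a genuine subplane (design lines carried by the unique plane lines through pairs of points) is left implicit in the paper, but the argument is otherwise identical.
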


In this paper, we will show the following.

\begin{mtheorem}\label{mt:maintheorem}
	Let $\Pi$ be an arbitrary projective plane of order $p^{2}$, $p\geq 7$, $p$ prime, and let $d^\perp$ be the minimum weight of $\C(\Pi)^\perp$. Then $d^\perp=2p^2-2p+2$, or $d^\perp=2p^2-2p+4$ or $d\geq 2p^2-2p+5.$ Furthermore:
	 \begin{itemize}
	 \item 	 if $d^\perp=2p^2-2p+2$ then every minimal weight code word is of the form $\lambda(\v_1-\v_2)$ with $\lambda\in\F_{p}^{*}$ and $\v_1,\v_2$ the incidence vectors of two disjoint projective planes of order $p-1$ embedded in $\Pi$;
	 \item if $d^\perp=2p^2-2p+4$ then every minimal weight code word is of the form $\lambda(\v_1-\v_2)$ with $\lambda\in\F_{p}^{*}$ and $\v_1,\v_2$ the incidence vectors of two disjoint antipodal planes of order $p-1$ embedded in $\Pi$.
	 \end{itemize}
	  Finally, $d^\perp\leq 2p^2-p$ if $\Pi$ contains a Baer subplane.
\end{mtheorem}
Note that the first bullet point follows from Result \ref{bag}. In the Desarguesian case $\C(2,p^2)^\bot$, $p\geq 3$, we have already mentioned that we can exclude the possibility of embedded projective planes of order $p-1$. In this paper (see Theorem \ref{noembeddedantipodal}), we will also show that there are no antipodal planes of order $p-1$ embedded in $\PG(2,p^2)$. This leads to the following corollary of our main theorem.
\begin{corollary}\label{cor:main}
	The minimum weight $d^\perp$ of $\C(2,p^2)^\perp$, $p\geq7$, satisfies
	\begin{align}\label{main}2p^2-2p+5\leq d^\perp\leq 2p^2-p.\end{align}
\end{corollary}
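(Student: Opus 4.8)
The plan is to obtain Corollary~\ref{cor:main} by specialising the Main Theorem to the Desarguesian plane $\Pi=\PG(2,p^2)$ and then discarding the two smallest values in its trichotomy using facts peculiar to the Desarguesian setting. Since $\PG(2,p^2)$ is an instance of an arbitrary projective plane of order $p^2$ with $p\geq 7$, the Main Theorem applies and gives that $d^\perp$ equals $2p^2-2p+2$, equals $2p^2-2p+4$, or is at least $2p^2-2p+5$, together with the structural description of the minimum weight code words in the first two cases. The proof therefore splits into producing the upper bound and excluding the two small values.

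For the upper bound I would invoke the final assertion of the Main Theorem, namely that $d^\perp\leq 2p^2-p$ whenever $\Pi$ contains a Baer subplane. The plane $\PG(2,p^2)$ always contains such a subplane, the subfield subplane $\PG(2,p)$ coming from the subfield $\F_p$ of $\F_{p^2}$, so the hypothesis holds automatically and the right-hand inequality of \eqref{main} follows at once.

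For the lower bound I would eliminate the two smallest values. If $d^\perp=2p^2-2p+2$, the Main Theorem forces a minimum weight code word to be supported on two disjoint embedded projective planes of order $p-1$; but every proper subplane of $\PG(2,p^2)$ has order $p$, and a plane of order $p-1$ is not of this form, so this case cannot occur, which is precisely Corollary~\ref{cordesmin}. If $d^\perp=2p^2-2p+4$, the Main Theorem forces the support to consist of two disjoint embedded antipodal planes of order $p-1$; these are excluded by Theorem~\ref{noembeddedantipodal}, which asserts that no antipodal plane of order at least $3$ can be embedded in a Desarguesian projective plane (here $p-1\geq 6$, so its hypothesis is met). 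With both small values ruled out, the trichotomy leaves $d^\perp\geq 2p^2-2p+5$, the left-hand inequality of \eqref{main}.

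The only substantial ingredient in this argument is Theorem~\ref{noembeddedantipodal}; once the Main Theorem is available, the rest is assembly. I therefore expect the genuine obstacle to lie not in the corollary but in proving the non-embeddability of antipodal planes of order $p-1$ in $\PG(2,p^2)$, a statement that must exploit the algebraic rigidity of the Desarguesian plane, since for general planes of order $p^2$ the value $d^\perp=2p^2-2p+4$ is left open by the Main Theorem. Finally, I note that the hypothesis $p\geq 7$ is simply inherited from the Main Theorem and is not affected by the specialisation.
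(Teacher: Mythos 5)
Your proposal is correct and follows essentially the same route as the paper, whose proof of Corollary~\ref{cor:main} is exactly this assembly: the trichotomy of the Main Theorem, Corollary~\ref{cordesmin} to exclude $d^\perp=2p^2-2p+2$, Theorem~\ref{noembeddedantipodal} to exclude $d^\perp=2p^2-2p+4$, and the Baer subplane $\PG(2,p)\subset\PG(2,p^2)$ for the upper bound. One minor slip: Theorem~\ref{noembeddedantipodal} is stated for antipodal planes of order $s\geq 4$, not $s\geq 3$ (order-$3$ antipodal planes do embed in $\PG(2,2^h)$, $h$ even, by Proposition~\ref{case3}), but since $p-1\geq 6$ this does not affect your argument.
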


The bound \eqref{main} is valid for $p=7$. As mentioned above, in \cite{p7}, the authors investigated the dual codes of arbitrary planes of order $49$. In particular, their result shows that in the Desarguesian case, $88\leq d^\perp\leq 91$; our lower bound equals $89$, so our result is a slight improvement of their result for $\C(2,7^2)^\bot$. Note that the minimum weight of $\C(2,5^2)^\bot$ is $45$, so the bound in Equation \eqref{main} is valid for $p=5$ too. This was shown in \cite{p5}, but not all arguments in our paper work for $p=5$.

\section{Antipodal planes}
\subsection{Preliminaries}

A {\em partial linear space} is an incidence structure $(\P,\L)$ with point set $\P$ and line set $\L$, where every line is a subset of $\P$ of size at least two, such that two distinct points are incident with at most one line. Antipodal planes are a class of partial linear spaces which are similar to projective planes. We will use the following definition of an antipodal plane.

\begin{definition}
	An {\em antipodal plane} of order $s\geq 2$ is a partial linear space $(\P,\L)$ such that
	\begin{itemize}
\item $|\P|=|\L|=s^2+s+2$;
\item every line contains $s+1$ points;
\item every point lies on $s+1$ lines.
\end{itemize}
\end{definition}

It easily follows that for each point $P\in \P$, there is a unique point, called the {\em antipodal point} and denoted by $P^\perp$, which does not lie on a line of $\L$ through $P$. Note that $P^{\perp\perp}=P$. Likewise, for every line $\ell$ of $\L$ there is a unique line,  called the {\em antipodal line}, which does not meet $\ell$ in a point. We denote this line by $\ell^\perp$. As for points, $\ell^{\perp\perp}=\ell$.

\begin{lemma}\label{perpcoll} Let $\Pi$ be an antipodal plane $(\P,\L)$ of order $s$. Let $P\in \P$ and $\ell\in\L$. If $P\in \ell$, then $P^\perp\in \ell^\perp$, so $\ell^\perp$ consists of the $s+1$ antipodal points of the points of $\ell$.
\end{lemma}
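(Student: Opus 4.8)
The plan is to first establish the incidence claim $P^\perp\in\ell^\perp$ and then obtain the set-theoretic description of $\ell^\perp$ by a cardinality count. Two facts from the paragraph preceding the lemma will do the work: each point $R$ has a unique antipodal point $R^\perp$, and since $R^{\perp\perp}=R$ the point $R^\perp$ is collinear with every point of $\P$ except $R$ itself; dually, $\ell^\perp$ is the unique line of $\L$ disjoint from $\ell$. I will also use the elementary consequence of $(\P,\L)$ being a partial linear space that two distinct lines share at most one point (otherwise two common points would lie on two distinct lines). Fix $P\in\ell$. First note $P^\perp\notin\ell$: if $P^\perp\in\ell$ then $P$ and $P^\perp$ would be collinear, contradicting the definition of the antipodal point.

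For the core step I would count the lines through $P^\perp$. Each of the $s$ points $Q\in\ell\setminus\{P\}$ is collinear with $P^\perp$ (as $Q\neq P$), so there is a unique line $m_Q$ joining $P^\perp$ to $Q$. These $s$ lines are pairwise distinct: if $m_{Q_1}=m_{Q_2}$ for $Q_1\neq Q_2$, this common line would contain the two points $Q_1,Q_2$ of $\ell$ and hence equal $\ell$, which is impossible since $P^\perp\notin\ell$. Each $m_Q$ meets $\ell$ in the point $Q$. Since $P^\perp$ lies on exactly $s+1$ lines, there is precisely one further line $m$ through $P^\perp$, distinct from all the $m_Q$.

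It then remains to show $m=\ell^\perp$, which forces $P^\perp\in\ell^\perp$. I would argue that $m$ is disjoint from $\ell$: if $m$ met $\ell$ in a point $R$, then $R\neq P$ (since $P$ and $P^\perp\in m$ are not collinear), so $R\in\ell\setminus\{P\}$ and $m$ would be the join $m_R$, contradicting that $m$ is distinct from every $m_Q$. As $\ell^\perp$ is the unique line disjoint from $\ell$, we conclude $m=\ell^\perp$, and hence $P^\perp\in\ell^\perp$. Finally, letting $P$ range over the $s+1$ points of $\ell$, the involution $Q\mapsto Q^\perp$ is injective, so $\{Q^\perp: Q\in\ell\}$ consists of $s+1$ distinct points, all lying in $\ell^\perp$; since $\ell^\perp$ has exactly $s+1$ points, these are precisely the points of $\ell^\perp$. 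The argument is essentially elementary double counting and there is no serious obstacle; the only point requiring care is the final intersection bookkeeping, in particular verifying that the leftover line $m$ misses $\ell$ entirely (not merely the points of $\ell\setminus\{P\}$), which is where the non-collinearity of $P$ and $P^\perp$ is invoked a second time.
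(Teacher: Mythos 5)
Your proof is correct, but it runs the counting argument from the opposite end compared with the paper. The paper counts in the pencil of $P$: since $\ell$ is one of the $s+1$ lines through $P$ and $\ell\cap\ell^\perp=\emptyset$, at most $s$ lines through $P$ can meet $\ell^\perp$, so at least one of the $s+1$ points of $\ell^\perp$ is not collinear with $P$; by uniqueness of the antipodal point, that point is $P^\perp$. You instead count in the pencil of $P^\perp$: the $s$ joins $m_Q$ for $Q\in\ell\setminus\{P\}$ are pairwise distinct, leaving exactly one residual line $m$ through $P^\perp$, which you show is disjoint from $\ell$ and hence equals $\ell^\perp$ by uniqueness of the antipodal line. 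The two arguments are mirror images of one another, but they lean on different uniqueness facts: the paper needs only the uniqueness of $P^\perp$ and dispatches the incidence claim in three lines, whereas you additionally invoke $P^{\perp\perp}=P$ (to guarantee every $Q\neq P$ is collinear with $P^\perp$) and the uniqueness of $\ell^\perp$, paying for it with the extra bookkeeping you correctly flag -- checking that $m$ misses all of $\ell$, including $P$ itself, which is where non-collinearity of $P$ and $P^\perp$ enters a second time. Your final step (injectivity of $Q\mapsto Q^\perp$ yields $s+1$ distinct antipodal points inside the $(s+1)$-point line $\ell^\perp$) coincides with the paper's closing count. Both proofs are complete; the paper's is marginally more economical, while yours has the small virtue of explicitly exhibiting $\ell^\perp$ as the unique line of the pencil of $P^\perp$ not accounted for by joins to $\ell$.
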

\begin{proof} There are $s+1$ points on $\ell^\perp$. Since $P\in \ell$ and $\ell$ does not have a point in common with $\ell^\perp$, there are at most $s$ lines through $P$ which intersect $\ell^\perp$. That means that there is at least one point on $\ell^\perp$ not collinear to $P$. Since the unique point not collinear to $P$ is $P^\perp$, we conclude that $P^\perp\in \ell^\perp$. Since for every point of $\ell$, its antipodal point lies on $\ell^\perp$, we conclude that $\ell^\perp$ consists precisely of the antipodal points of the points on $\ell$.
\end{proof}

Different terminology and slightly different definitions occur in the literature, see e.g. Schleiermacher \cite[Theorem 2.2,2.3]{schleier} for equivalent definitions and the notion of {\em regularity} of antipodal planes (which follows from finiteness and the definitions in that paper). Dembowski \cite[Section 7.4]{dembowski} described antipodal planes as {\em elliptic semi-planes} with parallel classes of size $2$, and linked them to biplanes (i.e. certain $2$-designs with $\lambda=2$). Moreover, antipodal planes are a particular type of what Payne \cite{payne} called {\em nearly generalised $3$-gons}.

It is not known for which orders there exists an antipodal plane. For $s=2,3$ there are unique examples which we will describe below. It is conjectured in \cite[Section 7.4 (13)]{dembowski} and \cite{payne} that no antipodal plane of order different from $2$ or $3$ exists. In \cite{payne}, the author derives some conditions on the order of an antipodal plane, but there are still infinitely many orders satisfying those conditions. In this section, we will show that no antipodal plane of order $s>3$ can be embedded in a Desarguesian projective plane.

\begin{example}\label{exorder2} Let $q=3^h$ or $3|q-1$, let $\omega$ be an element in $\F_q$ with $\omega^2-\omega+1=0$ -- which exists for all such $q$ -- and consider the projective plane $\PG(2,q)$. The {\em M\"obius-Kantor configuration} consists of the following $8$ points: 
$(1,0,0)$, $(0,1,0)$, $(0,0,1)$, $(1,1,0)$, $(0,1,\omega)$, $(1,1,1)$, $(\omega,1,1)$, and $(1,0,1-\omega)$.
It is easy to check that there are precisely $8$ lines of $\PG(2,q)$ containing $3$ points of these 8 points, and that these 8 points and 8 lines form an antipodal plane of order $2$, embedded in $\PG(2,q)$. We will show in Proposition \ref{case2} that it is impossible to embed this antipodal plane in a Desarguesian projective plane of an order not satisfying the conditions on $q$ expressed at the beginning of this Example.
\end{example}

\begin{example}\label{nieuw} (see also \cite[Section 7.4 (13)]{dembowski}) Consider $\Pi=\PG(2,4)$ and a subplane $\pi$ of order 2 in $\Pi$. Let $\P$ be the set of points of $\Pi\setminus \pi$ and let $\L$ be the set of lines of $\Pi$ that are not extended lines of the subplane $\pi$. Then $|\P|=|\L|=14$, every point of $\S$ lies on 4 lines and every line of $\L$ contains 4 points, so $(\P,\L)$ is an antipodal plane of order $3$. Since we can embed $\PG(2,4)$ in $\PG(2,2^h)$ for every even $h$, we see that we can embed an antipodal plane of order $3$ in a Desarguesian projective plane of order $2^h$, $h$ even. We will show in Proposition \ref{case3} that it is impossible to embed this antipodal plane in a Desarguesian projective plane of a different order.

\end{example}

\begin{remark}\label{opm}
	The examples of orders $2$ and $3$ are unique up to isomorphism (see \cite[Corollary 5.5]{schleier}). The example of order 2 can also be constructed as follows. Define an $8\times 8$-matrix where the first row has a 1 at positions 1, 2 and 4 and 0 elsewhere, and the other seven rows are obtained from applying a cyclic shift of length one to the previous row. This matrix defines an incidence matrix of an incidence structure $(\P,\L)$ with 8 points $\P$ corresponding to columns and 8 rows corresponding to lines $\L$. The order of the points in Example \ref{exorder2} is chosen to correspond to this incidence matrix.
	
	\par The example of order 3 can also be constructed as follows. Define a $14\times 14$-matrix where the first row has a $1$ at positions $1,2,5,7$ and 0 elsewhere, and the other $13$ rows are obtained from applying a cyclic shift of length one to the previous row. This matrix defines an incidence matrix of an incidence structure $(\P,\L)$ with $14$ points $\P$ corresponding to columns and $14$ rows corresponding to lines $\L$.  It is clear that every line contains $4$ points, and it can be checked that every point lies on $4$ lines;  since it is a partial linear space, we find that $(\P,\L)$ is an antipodal plane of order 3.
\end{remark}

\subsection{Antipodal planes embedded in Desarguesian projective planes}

Let $A_1,A_2,A_3$ be the points with coordinates $(1,0,0),(0,1,0),(0,0,1)$ in $\PG(2,q)$. The lines through these points are of the form $X_3=t_1X_2,X_1=t_2X_3,X_2=t_3X_1$ respectively, and we call $t_i$ the {\em slope} of the corresponding line and let $(A_iB)$ denote the slope of the line $A_iB$.
Recall the following classical theorems, attributed to Menelaos and Ceva respectively. The proofs are an easy exercise.
\begin{lemma} \label{menelaos}(Menelaos) Let $\ell$ be a line not through $A_1,A_2$ nor $A_3$. If $\ell$ meets $A_2A_3$ in $B_1$, $A_1A_3$ in $B_2$, $A_1A_2$ in $B_3$, then $$(A_1B_1)(A_2B_2)(A_3B_3)=-1.$$ 
\end{lemma}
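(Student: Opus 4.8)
The plan is to prove the identity by a direct computation in homogeneous coordinates, exploiting the fact that each of the three slopes is a simple ratio of the coefficients of $\ell$, so that the triple product telescopes.

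First I would write $\ell$ as $aX_1+bX_2+cX_3=0$. The hypothesis that $\ell$ passes through none of $A_1,A_2,A_3$ translates exactly into $a,b,c$ all being nonzero, since $\ell$ contains $A_i=(\delta_{i1},\delta_{i2},\delta_{i3})$ precisely when its $i$-th coefficient vanishes; this nonvanishing is what guarantees that all three slopes below are well defined. The sides of the coordinate triangle are $A_2A_3\colon X_1=0$, $A_1A_3\colon X_2=0$ and $A_1A_2\colon X_3=0$, and intersecting $\ell$ with each of these in turn gives
$$B_1=(0,-c,b),\qquad B_2=(-c,0,a),\qquad B_3=(-b,a,0).$$

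Next I would read off the three slopes from the normal forms recalled just above the lemma. A line through $A_1$ has the form $X_3=t_1X_2$, so substituting $B_1$ yields $(A_1B_1)=-b/c$; a line through $A_2$ has the form $X_1=t_2X_3$, so $B_2$ yields $(A_2B_2)=-c/a$; and a line through $A_3$ has the form $X_2=t_3X_1$, so $B_3$ yields $(A_3B_3)=-a/b$. Multiplying these,
$$(A_1B_1)(A_2B_2)(A_3B_3)=\left(-\frac{b}{c}\right)\left(-\frac{c}{a}\right)\left(-\frac{a}{b}\right)=-1,$$
since the numerators and denominators cancel in pairs while three minus signs survive.

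There is no genuine obstacle here: the only points that require care are bookkeeping ones. I would double-check that I have matched each slope $t_i$ to the correct pair of coordinates dictated by the three normal forms — the cyclic pattern $X_3=t_1X_2$, $X_1=t_2X_3$, $X_2=t_3X_1$ is exactly what forces the pairwise cancellation — and I would confirm once more that the nonvanishing of $a,b,c$ coincides with the stated hypothesis, so that no slope is $0$ or undefined. The cancellation is dictated by the cyclic symmetry of the coordinate triangle, which is the real content of the identity.
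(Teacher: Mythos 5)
Your proof is correct: with $\ell\colon aX_1+bX_2+cX_3=0$, the nonvanishing of $a,b,c$ is exactly the hypothesis that $\ell$ avoids the vertices, the intersection points $B_1=(0,-c,b)$, $B_2=(-c,0,a)$, $B_3=(-b,a,0)$ and the slopes $(A_1B_1)=-b/c$, $(A_2B_2)=-c/a$, $(A_3B_3)=-a/b$ are all computed correctly against the normal forms $X_3=t_1X_2$, $X_1=t_2X_3$, $X_2=t_3X_1$, and the product telescopes to $-1$. The paper gives no proof of this lemma (it is dismissed as ``an easy exercise''), and your direct homogeneous-coordinate computation is precisely the standard argument it has in mind, including the correct observation that $a,b,c\neq 0$ keeps every slope defined and nonzero.
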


\begin{lemma}\label{ceva} (Ceva) For every point $X$ not on the sides of the triangle $A_1A_2A_3$,  we have $$(A_1X)(A_2X)(A_3X)=1.$$ 
\end{lemma}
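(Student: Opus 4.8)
The plan is to compute each of the three slopes directly in terms of the coordinates of $X$ and to observe that their product telescopes. First I would write $X=(x_1,x_2,x_3)$ and record that the three sides of the triangle $A_1A_2A_3$ are exactly the coordinate lines: $A_2A_3$ is $X_1=0$, $A_1A_3$ is $X_2=0$, and $A_1A_2$ is $X_3=0$. Hence the hypothesis that $X$ lies on none of the sides is equivalent to $x_1,x_2,x_3$ all being nonzero, which guarantees that each slope computed below is a well-defined element of $\F_q^*$.

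Next I would read off each slope from the normal forms recorded just before the lemma. The line $A_1X$ has an equation of the form $X_3=t_1X_2$; substituting the coordinates of $X$ yields $x_3=t_1x_2$, so $(A_1X)=x_3/x_2$. In the same way, the line $A_2X$ has equation $X_1=t_2X_3$, giving $(A_2X)=x_1/x_3$, and the line $A_3X$ has equation $X_2=t_3X_1$, giving $(A_3X)=x_2/x_1$.

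Multiplying the three expressions then gives
$$(A_1X)(A_2X)(A_3X)=\frac{x_3}{x_2}\cdot\frac{x_1}{x_3}\cdot\frac{x_2}{x_1}=1,$$
since every numerator cancels against a denominator. This is the whole argument, and there is no genuine obstacle: the only point requiring care is that the three denominators are nonzero, which is precisely what the hypothesis on $X$ provides. It is instructive to contrast this with Lemma \ref{menelaos}, where the three points $B_i$ are instead cut out on the sides by a single line $\ell:\alpha X_1+\beta X_2+\gamma X_3=0$; there, solving for each intersection point introduces one sign, so the analogous product of slopes telescopes to $-1$ rather than to $1$.
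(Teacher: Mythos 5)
Your computation is correct: the three slopes are $x_3/x_2$, $x_1/x_3$, $x_2/x_1$, their product telescopes to $1$, and the hypothesis that $X$ avoids the sides is exactly what makes each slope a well-defined element of $\F_q^*$. The paper leaves this lemma (and Lemma \ref{menelaos}) as ``an easy exercise,'' and your argument is precisely the intended one, with your closing remark correctly explaining where the extra sign in Menelaos comes from.
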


We say that the incidence structure $(\P,\L)$ is {\em embedded} in a projective plane $\Pi=(\P',\L')$ if there are injective maps $\phi:\P\mapsto \P'$ and $\psi:\L\mapsto \L'$ which preserve incidence and non-incidence. This means that we require that if there is no line in $\L$ containing $P$ and $Q$ of $\P$, then there is no line of $\psi(\L)$ containing $\phi(P)$ and $\phi(Q)$. Hence, if we embed an antipodal plane $(\P,\L)$ in a projective plane $(\P',\L')$, the line of $\L'$ containing the (images of) the points $P$ and $P^\perp$ is different from the lines obtained from the embedding of $(\P,\L)$.

In the following proofs, we omit the embedding maps $\phi$ and $\psi$ in the notation and simply consider the points of the embedded antipodal plane as points of the projective plane $\PG(2,q)$ in which it is embedded.

\begin{proposition}\label{case2}
	An antipodal plane $(\P,\L)$ of order $2$ is embeddable in a projective plane $\PG(2,q)$ if and only if $q=3^h$ or $3|q-1$.
\end{proposition}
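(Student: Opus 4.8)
The plan is to reduce the proposition to the single field-theoretic fact that $x^2-x+1$ has a root in $\F_q$ exactly when $q=3^h$ or $3\mid q-1$. A root of $x^2-x+1$ is either $-1$ (a double root, occurring precisely in characteristic $3$, i.e. $q=3^h$) or a primitive sixth root of unity, which lies in $\F_q$ iff $6\mid q-1$; since such $q$ is odd, and since in even characteristic $x^2-x+1=x^2+x+1$ has as a root a primitive cube root of unity, this happens iff $3\mid q-1$. Granting this, the forward implication is immediate from Example \ref{exorder2}: as soon as $\omega$ with $\omega^2-\omega+1=0$ exists, the eight listed points are distinct and carry the incidence pattern of the antipodal plane of order $2$, so it embeds.

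For the converse, I would assume $(\P,\L)$ is embedded in $\PG(2,q)$ and exploit that, by Remark \ref{opm}, the antipodal plane of order $2$ is unique up to isomorphism. Hence I may take its incidences to be those of the M\"obius--Kantor configuration, with points $P_0,\dots,P_7$ and lines $\ell_j=\{P_j,P_{j+1},P_{j+3}\}$ (indices mod $8$). The idea is to coordinatize the configuration and extract an algebraic constraint. Using the action of $\PGL(3,q)$ I would place a triangle of the configuration -- three points $A_1,A_2,A_3$, each pair of which lies on a common configuration line -- at $(1,0,0)$, $(0,1,0)$, $(0,0,1)$ and normalize a fourth point in general position to $(1,1,1)$; the chosen triangle is genuinely nondegenerate because the embedding preserves point--line non-incidence, so its three vertices are not collinear in $\PG(2,q)$. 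Each of the four remaining configuration points is then forced to be the intersection of two configuration lines whose other points are already located, so its coordinates are determined rationally, leaving at most one free field element, which plays the role of $\omega$.

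With coordinates in place I would track the slopes along the sides of the triangle $A_1A_2A_3$, applying Menelaos (Lemma \ref{menelaos}) to the configuration lines that cross all three sides and Ceva (Lemma \ref{ceva}) to the configuration points, so as to obtain a short chain of slope relations. The final, as-yet-unused incidence of the configuration -- the requirement that the last triple of points be collinear in $\PG(2,q)$ -- then closes the chain and, after clearing denominators, becomes the vanishing of a $3\times 3$ determinant that factors as a nonzero scalar times $\omega^2-\omega+1$. Since the embedding is genuine, this determinant is zero, forcing $\omega^2-\omega+1=0$ in $\F_q$; by the opening remark this yields $q=3^h$ or $3\mid q-1$, completing the converse.

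The main obstacle I anticipate is the coordinate bookkeeping in the converse: reconstructing all eight coordinates purely from the abstract incidence structure, checking at each step that the points produced are distinct and that the intermediate slopes are defined (no spurious coincidences, including in characteristics $2$ and $3$), and then verifying that the closing determinant really does factor with $x^2-x+1$ as its only nontrivial factor rather than some other quadratic. Organizing the relations through Menelaos and Ceva is precisely what should keep this computation from branching into many cases.
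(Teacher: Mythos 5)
Your overall strategy is sound and, at its computational heart, essentially identical to the paper's: fix a frame via the transitivity of $\PGL(3,q)$, observe that the remaining configuration points are rationally determined up to one free parameter $\omega$ (in the paper's coordinates $P_5=(0,1,\omega)$ is the free point; $P_4$, $P_7$, $P_8$ are then forced as intersections), and express the single unused collinearity as a $3\times3$ determinant -- with $P_2=(0,1,0)$, $P_7=(\omega-1,\omega,\omega)$, $P_8=(1,0,1-\omega)$ that determinant is exactly $\omega^2-\omega+1$, just as you predicted. The forward direction via Example \ref{exorder2} and your field-theoretic analysis of when $x^2-x+1$ has a root are both correct. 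You differ from the paper in one legitimate way: you import the uniqueness of the order-$2$ antipodal plane from Schleiermacher (Remark \ref{opm}) to fix the M\"obius--Kantor incidences, whereas the first paragraph of the paper's proof rederives that incidence structure directly from the antipodality axioms. The remark following Proposition \ref{case3} explicitly notes that your shortcut is available; the cost is an external citation, and the benefit of the paper's route is that it reproves unicity as a by-product.

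The one piece of your plan that would fail is the proposed Menelaos/Ceva bookkeeping. In the M\"obius--Kantor configuration, taking the triangle $P_1P_2P_3$ with sides $\{P_1,P_2,P_4\}$, $\{P_2,P_3,P_5\}$, $\{P_1,P_3,P_8\}$, the only two configuration points off the sides are $P_6=P_2^\perp$ and $P_7=P_3^\perp$, so each has one cevian that is not a configuration line and Lemma \ref{ceva} would involve an a priori unknown slope; dually, the only configuration lines avoiding all three vertices are $\{P_4,P_5,P_7\}$ and $\{P_5,P_6,P_8\}$, and each of these meets one side of the triangle in a non-configuration point, so Lemma \ref{menelaos} likewise introduces unknown slopes. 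More tellingly, for $s=2$ every line of $\L$ passes through one of the six points $A_1,A_2,A_3,A_1^\perp,A_2^\perp,A_3^\perp$, which is precisely why the paper runs the Menelaos/Ceva machinery only for $s\geq 4$ (Theorem \ref{noembeddedantipodal}) and treats orders $2$ and $3$ by direct coordinatization. Fortunately this layer is redundant in your proposal: once your second step has pinned down all eight coordinates rationally in $\omega$, the closing collinearity is a direct determinant computation, exactly as in the paper, so dropping the Menelaos/Ceva organization leaves a complete and correct proof.
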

\begin{proof}
	Suppose that there is an antipodal plane $(\P,\L)$ embedded in $\PG(2,q)$. Consider a line $\ell$ of $\L$ and let $P_1,P_2,P_4$ denote the points of $\P$ on $\ell$. Let $P_1^\perp=P_5,P_2^\perp=P_6,P_4^\perp=P_8$ be their antipodal points; these are contained in $\ell^\perp$ and hence, collinear. Let $P_3$ denote the third point of $\P$, different from $P_2$ and $P_5$ on $P_2P_5$ and let $P_7=P_3^\perp$. Then $P_6=P_2^\perp,P_7=P_3^\perp,P_1=P_5^\perp$ are collinear. This implies that $P_1P_3$ cannot meet $\ell^\perp$ in the point $P_6$, and since $P_5=P_1^\perp$, it follows that $P_1P_3$ meets $\ell^\perp$ in $P_8$. Since $P_1,P_3,P_8$ are collinear, $P_5=P_1^\perp,P_7=P_3^\perp, P_4=P_8^\perp$ are collinear. Furthermore, there is a line of $\L$ through $P_2$ and $P_8$, which then necessarily contains the point $P_7$; and finally, this implies that $P_6=P_2^\perp$, $P_3=P_7^\perp$ and $P_4=P_8^\perp$ are collinear. (With this ordering of the points, we see that the incidence matrix of points and lines of $(\P,\L)$ is indeed given by the cyclic $8\times 8$ $(0,1)$-matrix whose first row has non-zero entries in positions $1,2,4$ as in Remark \ref{opm}.)
	
	Now suppose that $(\P,\L)$ is embedded in $\PG(2,q)$. Since $\PGL(3,q)$ acts transitive on the frames of $\PG(2,q)$, we can take $P_1=(1,0,0)$, $P_2=(0,1,0)$, $P_3=(0,0,1)$ and $P_6=(1,1,1)$. This implies that $P_4=(1,1,0)$ and $P_5=(0,1,\omega)$ for some $\omega\in \F_q$. Since $P_5,P_6,P_8$ are collinear, and $P_8$ lies on $P_1P_3$, we find $P_8=(1,0,1-\omega)$. The lines $P_4P_5$ and $P_1P_6$ meet in $P_7$, which implies that $P_7=(\omega-1,\omega,\omega)$. Note that the choice of these coordinates implies that seven of the triples of collinear points in $\P$ are indeed collinear in $\PG(2,q)$. Finally, the points of the last line, that is $P_2,P_7,P_8$, are collinear if and only if $\omega^2-\omega+1=0$. We see that $\omega$ exists if and only if $q=3^h$ (in which case $\omega=-1$) or $3|q-1$.
\end{proof}

\begin{lemma}\label{driehoek}
	Suppose that $(\P,\L)$ is an antipodal plane of order $s\geq 3$ embedded in $\PG(2,q)$, then there are $3$ points, $A_1,A_2,A_3\in \P$ forming a triangle whose sides are lines of $\L$ and such that $A_1^\perp,A_2^\perp,A_3^\perp$ are not  on the sides of the triangle.
\end{lemma}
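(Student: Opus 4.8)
The plan is to fix a line $\ell\in\L$ of the antipodal plane together with two of its points $A_1,A_2$, and then to produce the third vertex $A_3$ by a counting argument, showing that among all points completing $\{A_1,A_2\}$ to a triangle with sides in $\L$ only a controlled number can fail to keep the antipodes off the opposite sides. Two preliminary facts drive everything. First, a degree count: since every $P\in\P$ lies on $s+1$ lines of $\L$, each carrying $s$ further points, $P$ is joined by a line of $\L$ to exactly $s^2+s$ points, hence to all of $\P$ except $P$ and $P^\perp$; equivalently, $P$ and $Q$ are non-collinear in $(\P,\L)$ precisely when $Q\in\{P,P^\perp\}$. Second, an embedding observation: for $R\in\P$ and $n\in\L$, the point $R$ lies on the $\PG(2,q)$-line carrying $n$ only if $R$ is incident with $n$ inside $(\P,\L)$ -- otherwise two images would share the line $\psi(n)$ while their preimages are non-collinear in $\L$, contradicting the preservation of non-incidence (and injectivity of $\psi$, which forces the joining line of $\L$ to equal $n$). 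This second fact lets me test ``not on the sides'' entirely within the antipodal plane, and also guarantees that any $A_3\notin\ell$ yields a genuinely non-degenerate triangle of $\PG(2,q)$.

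I call $A_3\in\P$ a \emph{candidate} if $A_3\notin\ell$ and both $A_1A_3$ and $A_2A_3$ are lines of $\L$; each candidate completes $A_1,A_2$ to a triangle all of whose sides lie in $\L$. By the degree count the points collinear with both $A_1$ and $A_2$ are all of $\P$ except the four points $A_1,A_1^\perp,A_2,A_2^\perp$, and after discarding the $s-1$ points of $\ell$ other than $A_1,A_2$ there remain exactly $s^2-1$ candidates.

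Next I reduce the antipodality requirement to three conditions. For any candidate triangle, each antipode $A_i^\perp$ automatically avoids the two sides incident with $A_i$, since those sides are lines of $\L$ through $A_i$ while $A_i^\perp$ is non-collinear with $A_i$; thus only the three ``opposite-side'' incidences $A_3^\perp\in A_1A_2$, $A_1^\perp\in A_2A_3$, $A_2^\perp\in A_1A_3$ can occur. Each is cheap to bound. The condition $A_3^\perp\in\ell$ is equivalent, via Lemma~\ref{perpcoll} and the involution property, to $A_3\in\ell^\perp$, and of the $s+1$ points of $\ell^\perp$ only the $s-1$ that are antipodes of points of $\ell\setminus\{A_1,A_2\}$ are candidates. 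The condition $A_1^\perp\in A_2A_3$ forces $A_3$ onto the \emph{single} line $m\in\L$ through $A_2$ and $A_1^\perp$ (here $m\neq\ell$ and $A_1\notin m$, else $A_1,A_1^\perp$ would be collinear), whose candidate points number at most $s-1$; the condition $A_2^\perp\in A_1A_3$ is handled symmetrically. Removing these three bad sets leaves at least $s^2-1-3(s-1)=(s-1)(s-2)$ good candidates, which is positive for all $s\ge 3$, producing the desired triangle.

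The step I expect to be the main obstacle is the treatment of the two ``cross'' conditions $A_1^\perp\in A_2A_3$ and $A_2^\perp\in A_1A_3$: unlike $A_3^\perp\in A_1A_2$, these involve a side ($A_2A_3$ or $A_1A_3$) that itself depends on the still-unchosen point $A_3$, so they look like moving constraints. The key manoeuvre is to notice that each is equivalent to $A_3$ lying on one fixed line of $\L$ (namely $A_2A_1^\perp$, respectively $A_1A_2^\perp$), which converts the apparently dynamic requirement into a static count bounded by $s-1$ and makes the union bound close with room to spare.
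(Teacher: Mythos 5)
Your proof is correct and is essentially the paper's argument in a different packaging: both reduce the lemma to avoiding the same three obstructions --- $A_3\in\ell^\perp$ (equivalent via Lemma~\ref{perpcoll} to $A_3^\perp\in\ell$), $A_3$ on the unique $\L$-line through $A_2$ and $A_1^\perp$, and $A_2^\perp$ on the side $A_1A_3$ --- and then conclude that a good $A_3$ exists by counting, your version needing $(s-1)(s-2)>0$ where the paper needs $s+1\geq 4$. The only organizational difference is that you run one global union bound over the $s^2-1$ candidates completing the edge $A_1A_2$, whereas the paper first picks the side $m=A_1A_3$ avoiding $A_2^\perp$ and then picks $A_3$ on $m$ avoiding two intersection points; your write-up also makes explicit the embedding observation (a point of $\P$ on the $\PG(2,q)$-line carrying a line of $\L$ must already be incident with it in the antipodal plane) that the paper leaves implicit in its definition of embedding.
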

\begin{proof}
	Let $\ell\in \L$ be a line and let $A_1,A_2$ be points of $\P$ on $\ell$. The points $A_1^\perp$ and $A_2^\perp$ do not lie on $\ell$. Let $m$ be a line of $\L$ through $A_1$, different from $\ell$ and not through $A_2^\perp$ (which exists since $s+1> 2$). Let $A_3$ be a point of $m$, different from $A_1$, different from the intersection point of $\ell^\perp$ with the line $m$ and different from the intersection point of the line through $A_1^\perp$ and $A_2$ with $m$. The point $A_3$ exists since $s+1\geq 4$. We have chosen $A_3$ such that $A_2A_3$ does not contain $A_1^\perp$. Since $A_3$ does not lie on $\ell^\perp$, Lemma \ref{perpcoll} implies that $A_3^\perp$ does not lie on $\ell=A_1A_2$. Furthermore, the line $m=A_1A_3$ does not contain $A_2^\perp$ by construction. We conclude that the sides of the triangle $A_1,A_2,A_3$ are lines of $\L$ and that the points $A_1^\perp,A_2^\perp, A_3^\perp$ are not on the sides of this triangle.
\end{proof}

\begin{proposition}\label{case3}
	An antipodal plane $(\P,\L)$ of order $3$ is embeddable in a projective plane $\PG(2,q)$ if and only if $q=2^h$ with $h$ even (as in Example \ref{nieuw}).
\end{proposition}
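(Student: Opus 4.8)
The plan is to follow the strategy of Proposition~\ref{case2}, now for the order-$3$ antipodal plane. The key simplification is that this plane is unique up to isomorphism (Remark~\ref{opm}), so its entire incidence structure is known: I may work with the explicit cyclic model on fourteen points whose lines are the cyclic shifts of $\{1,2,5,7\}$, in which the antipode of point $i$ is $i+7$. One direction is then immediate. Example~\ref{nieuw} realises $(\P,\L)$ inside $\PG(2,4)$, and since $\PG(2,4)=\PG(2,2^2)$ sits inside $\PG(2,2^h)$ exactly when $h$ is even, this produces embeddings in $\PG(2,q)$ for every $q=2^h$ with $h$ even.

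For the converse, suppose $(\P,\L)$ is embedded in $\PG(2,q)$. First I would invoke Lemma~\ref{driehoek} to fix a triangle $A_1,A_2,A_3\in\P$ whose sides lie in $\L$ and whose antipodes avoid those sides; in the cyclic model one checks that the points labelled $1,2,4$ form such a triangle. Setting $A_1=(1,0,0)$, $A_2=(0,1,0)$, $A_3=(0,0,1)$ and using the diagonal torus stabilising the triangle to normalise two further points of $\P$ to $(1,1,0)$ and $(0,1,1)$, I would then coordinatise the remaining eleven points one at a time. Each is forced to lie on prescribed lines of $\L$ whose other points are already coordinatised, so its coordinates are determined up to a single field parameter $t$ recording the position of one point on a side; propagating the side-incidences expresses all coordinates as explicit rational functions of $t$.

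The decisive step is to read off the conditions imposed by the lines of $\L$ that have not yet been used. Most of them merely confirm consistency, but two are genuinely restrictive: one line forces $2t=0$, whence $\characteristic\F_q=2$ since $t\neq 0$, and a second line then forces $t^2+t+1=0$, i.e.\ $t$ is a primitive cube root of unity. Over a field of characteristic $2$ such a $t$ exists precisely when $\F_4\subseteq\F_q$, that is when $q=2^h$ with $h$ even. To finish, I would substitute $t^2+t+1=0$ back into the coordinates, check that the remaining line-conditions hold automatically and that the fourteen points are pairwise distinct, so that the non-incidences (the antipodality relation) are respected as well; this confirms that the embedding genuinely exists for exactly these $q$.

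I expect the main obstacle to be organisational rather than conceptual: managing fourteen points and fourteen lines, choosing an order of coordinatisation in which every new point is pinned down by earlier data, and correctly isolating the two lines that yield $\characteristic\F_q=2$ and $t^2+t+1=0$ from the many that only re-confirm consistency. A minor point requiring care is verifying at the outset that the chosen triangle meets the hypotheses of Lemma~\ref{driehoek}, and at the end that no two of the fourteen coordinatised points collapse for the admissible values of $t$.
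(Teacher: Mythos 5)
Your proposal is correct in substance, and its computational core coincides with the paper's: fix a triangle of $\L$-lines via Lemma~\ref{driehoek}, normalise coordinates, reduce everything to a single parameter $t$, and show the surviving collinearity conditions force $\characteristic\F_q=2$ and $t^2+t+1=0$, hence $\F_4\subseteq\F_q$ and $h$ even; the sufficiency direction via Example~\ref{nieuw} is identical to the paper's. The genuine difference is your logical input: you import the classification of antipodal planes of order $3$ (Remark~\ref{opm}, due to Schleiermacher) so as to fix the entire incidence structure as the cyclic model on $14$ points before coordinatising, whereas the paper deliberately avoids uniqueness -- indeed, the remark following Proposition~\ref{case3} explicitly flags your route as an alternative. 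What your route buys: every incidence is prescribed in advance (and your claims check out: in the cyclic model the antipode of $i$ is $i+7$, and $1,2,4$ do form a triangle with sides in $\L$ whose antipodes $8,9,11$ avoid those sides), so the paper's case analysis with the points $Z_1,Z_2$ -- deciding which of the lines $A_3X_3$, $A_3Y_3$ each lies on -- evaporates and the coordinatisation becomes a straight-line computation. What the paper's route buys: independence from the classification and a slightly stronger conclusion, since its argument simultaneously shows there is a unique \emph{embeddable} antipodal plane of order $3$, a fact you assume rather than prove. One detail to anticipate when executing the computation: in the paper's normalisation no single line yields the linear condition $2t=0$; the characteristic-$2$ conclusion comes from combining two quadratic conditions, $t^2-3t+1=0$ (Equation~\eqref{t3s}) and $t^2-t+1=0$ (Equation~\eqref{tb}), whose difference gives $2t=0$ with $t\neq0$. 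The exact shape of your conditions will depend on your normalisation (fixing $(1,1,0)$ and $(0,1,1)$ rather than $A_1^\perp=(1,1,1)$), but the endgame -- characteristic $2$ together with $t$ a primitive cube root of unity -- is the same, so this is bookkeeping rather than a gap; likewise your closing existence check is harmless but redundant, since Example~\ref{nieuw} already supplies the embedding for all $q=2^h$ with $h$ even.
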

\begin{proof}
	Suppose that there is an antipodal plane $(\P,\L)$ of order $3$ embedded in $\PG(2,q)$. By Lemma \ref{driehoek}, we can find $A_1,A_2,A_3\in \P$ forming a triangle whose sides are lines of $\L$ and such that $A_1^\perp,A_2^\perp,A_3^\perp$ are not contained on the sides of the triangle. Since $\PGL(3,q)$ acts transitively on the frames of $\PG(2,q)$, we can pick coordinates such that $A_1=(1,0,0)$, $A_2=(0,1,0)$, $A_3=(0,0,1)$ and $A_1^\perp=(1,1,1)$. Let $\ell_1=A_2A_3$, $\ell_2=A_1A_3$, $\ell_3=A_1A_2$ and let $\ell_3^\perp\cap \ell_2=X_2$, $\ell_3^\perp\cap \ell_1=X_1$, $\ell_2^\perp\cap \ell_3=X_3$, $\ell_2^\perp\cap \ell_1=Y_1$, $\ell_1^\perp\cap \ell_3=Y_3$, $\ell_1^\perp\cap \ell_2=Y_2$. It is easy to check that the points $X_i,Y_i$ are all distinct and form  together with $A_1,A_2,A_3,A_1^\perp,A_2^\perp,A_3^\perp$ the set of all 12 points of $\P$ contained on the lines $\ell_1,\ell_2,\ell_3,\ell_1^\perp,\ell_2^\perp,\ell_3^\perp$.

	The line $\ell_2$ contains precisely $4$ points of $\P$, namely $A_1$, $A_3$, $X_2$, and $Y_2$. Since $A_2\notin \ell_2^\perp$, the point $A_2$ is collinear to all four of them. It is collinear to $A_1$ via $\ell_3$ and to $A_3$ via $\ell_1$. Furthermore, $A_1^\perp$ is collinear to $X_2$ via $\ell_3^\perp$, which meets $\ell_1$ in $X\neq A_2$, so $A_2A_1^\perp$ cannot meet $\ell_2$ in $X_2$. It follows that the line $A_2A_1^\perp$ meets $\ell_2$ in $Y_2$. Hence, we have $Y_2=(1,0,1)$ and since $A_3A_1^\perp$ meets $\ell_3$ in $Y_3$,  we have $Y_3=(1,1,0)$. Now let $X_3=(1,t,0)$ with $t\neq 0,1$. Since $X_3A_1^\perp$ meets $\ell_1$ in $Y_1$,  we have $Y_1=(0,1-t,1)$. Since $A_3^\perp$ lies on $Y_2Y_3$ and $X_3Y_1$, we find that $A_3^\perp=(2-t,1,1-t)$. Since $A_2^\perp$ lies on $A_1Y_1$ and $Y_2Y_3$, $A_2^\perp=(2-t,1-t,1)$. We find $X_{2}=(1,0,t)$ since $X_{2}=\ell_3^\perp\cap \ell_2$ and $\ell_3^\perp$ is the line through $A_1^\perp$ and $A_2^\perp$. Expressing that $A_2,A_3^\perp$ and $X_2$ are collinear leads to 
\begin{align}\label{t3s}
	t^2-3t+1=0.
\end{align}
Now let $Z_1$ be the point of $\P$ on the line of $\L$ through $A_1A_2^\perp$, different from $A_1,A_2^\perp,Y_1$ and let $Z_2$ be the point of $\P$ on the line of $\L$ through $A_1A_3^\perp$, different from $A_1,A_3^\perp,X_1$. Then one of $Z_1,Z_2$ lies on the line $A_3Y_3$ and the other on the line $A_3X_3$; similarly, one of $Z_1,Z_2$ lies on the line $A_2Y_2$ and the other on the line $A_2X_2$.

If $Z_1$ lies on $A_3X_3$, then $Z_1=(1-t,t(1-t),t)$ and $Z_2=(1,1,1-t)$. The line $A_2Y_2$ has equation $X=Z$ and needs to contain one of $Z_1,Z_2$. 
If $Z_2$ lies on $A_2Y_2$, then $t=0$, a contradiction. This implies that $Z_1$ lies on $A_2X_2$. It follows that $t=1-t$, an immediate contradiction if $\mathrm{char}( \F_q)=2$, and $t=\frac{1}{2}$ if $\mathrm{char}(\F_q)\neq 2$. But plugging in $t=\frac{1}{2}$ in Equation \eqref{t3s} leads to a contradiction.

We conclude that $Z_1$ lies on $A_3Y_3$. It follows that $Z_1=(1-t,1-t,1)$ and $Z_2=(1,t,(1-t)t)$. As before, the line $A_2Y_2$ has equation $X=Z$ and needs to contain one of $Z_1,Z_2$. 
If $Z_1$ lies on $A_2Y_2$, then $t=0$, a contradiction. This implies that $Z_1$ lies on $A_2X_2$. But $A_2X_2$ has equation $Z=tX$, so it follows that $t(1-t)=1$, and hence
\begin{align}\label{tb}
	t^2-t+1=0\:.
\end{align}
Considering Equation \eqref{t3s}, this is a contradiction if $\mathrm{char}(\F_q)\neq 2$. We conclude that $q=2^h$. Since there needs to exists a root $t$ to Equation \eqref{tb} in $\F_{2^h}$, $\F_{2^h}$ needs to contain the subfield $\F_4$, so $h$ is necessarily even. If $t^2+t+1=0$, all coordinates of the $14$ points defined in this proof belong to $\F_{4}$ and the $7$ points of the subplane $\PG(2,4)$ that do not belong to the antipodal plane have coordinates $(1,0,t+1),(1,1,t+1),(1,t,t),(1,t+1,0),(1,t+1,1),(1,t+1,t+1),(0,1,1)$. It is easy to check that these latter $7$ points form a Fano subplane. Hence, the $14$ points define the antipodal plane of order $3$ described in Example \ref{nieuw}.
\end{proof}

\begin{remark}
	One can also use the fact that the antipodal planes of orders $2$ and $3$ are unique (see \cite{schleier}) to derive Propositions \ref{case2} and \ref{case3}. However, the above proofs don't use this unicity result and  also slightly stronger: Proposition \ref{case2} reproofs the unicity result, and Proposition \ref{case3} shows that there is a unique embeddable antipodal plane of order $3$.
\end{remark}

\begin{theorem}\label{noembeddedantipodal}
An antipodal plane $(\P,\L)$ of order $s\geq 4$ cannot be embedded in a Desarguesian projective plane $\PG(2,q)$.
\end{theorem}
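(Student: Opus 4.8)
The plan is to mimic the coordinate-geometry strategy that worked for $s=3$ in Proposition~\ref{case3}, but to extract a \emph{uniform} algebraic contradiction that holds for every $s\geq 4$. By Lemma~\ref{driehoek}, I would start by fixing a triangle $A_1,A_2,A_3\in\P$ whose sides $\ell_3=A_1A_2$, $\ell_2=A_1A_3$, $\ell_1=A_2A_3$ lie in $\L$, with the antipodal points $A_i^\perp$ off these sides. Normalizing via $\PGL(3,q)$, I take $A_1=(1,0,0)$, $A_2=(0,1,0)$, $A_3=(0,0,1)$ and $A_1^\perp=(1,1,1)$. By Lemma~\ref{perpcoll}, the antipodal line $\ell_i^\perp$ consists exactly of the antipodal points of the $s+1$ points of $\ell_i$; this is the key rigidity I want to exploit, since it converts each ``collinear'' condition in $\P$ into a ``collinear'' condition among antipodal points in $\PG(2,q)$, giving me many polynomial equations in the slope parameters.

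I would then parametrize the points of $\P$ on the sides. On $\ell_2=A_1A_3$ (the line $X_2=0$), write the $s+1$ points of $\P$ as $(1,0,t_k)$ for slopes $t_k\in\F_q$, $k=1,\dots,s+1$, and similarly introduce slope coordinates on $\ell_1$ and $\ell_3$. Because every point $P$ of $\P$ on $\ell_i$ has its antipodal $P^\perp$ lying on $\ell_i^\perp$, and because collinearity in $\P$ is preserved by $\perp$ (Lemma~\ref{perpcoll}), I obtain a web of incidences: for each point $P$ on $\ell_2$, the line $A_2P$ (a line of $\L$, since $A_2\notin\ell_2^\perp$) meets the other sides in further points of $\P$, and applying $\perp$ gives matching incidences among the antipodal points. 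The natural engine here is the Menelaos/Ceva machinery (Lemmas~\ref{menelaos} and~\ref{ceva}): each triangle $A_1A_2A_3$ together with a transversal or a concurrent point yields a product-of-slopes identity equal to $-1$ or $1$. I expect that chaining these identities across the $s+1$ points and their antipodes forces the slopes to satisfy a \emph{fixed} low-degree polynomial (the analogue of $t^2-t+1=0$ in Proposition~\ref{case3}), independent of $s$.

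The crux is a counting-versus-degree contradiction. Each side $\ell_i$ carries $s+1$ points of $\P$, hence $s+1$ distinct slope values; but the algebraic relations forced by antipodality should constrain every admissible slope to be a root of one fixed polynomial of bounded degree (I anticipate degree at most $2$ or $3$, as in the earlier cases). Once $s+1$ exceeds that degree, there are too many distinct slopes to all satisfy the same equation, which is the contradiction. Concretely, I would aim to show that the map $P\mapsto P^\perp$, restricted to the points on a side, together with the incidence structure of $\L$, induces a projectivity (a fractional-linear map) of the slope line to itself whose fixed-point or cross-ratio condition pins the slopes down; an antipodal plane of order $s\geq 4$ then demands more than the maximum number of points such a projective constraint allows.

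\textbf{Main obstacle.} The hard part will be organizing the combinatorics of \emph{which} lines of $\L$ pass through \emph{which} points, uniformly in $s$: unlike the $s=3$ case, where all $14$ points could be listed and coordinatized by hand, for general $s$ I cannot enumerate the configuration, so I must argue abstractly that the antipodal incidence pattern on the three sides and three antipodal lines forces a single projective (linear-fractional) relation on the slopes. Establishing that this relation is the \emph{same} polynomial for all $s+1$ slopes — rather than a family of relations that could be simultaneously satisfied by many values — is the delicate step, and it is where the rigidity from Lemma~\ref{perpcoll} and the multiplicativity of the Menelaos/Ceva slope products must be combined carefully. If instead the natural contradiction emerges from a dimension or cross-ratio argument (three points and their antipodes over-determining a conic or a collineation), I would pursue that route; either way, the essential point is that Desarguesian coordinates make antipodality \emph{algebraic}, and algebraicity caps the number of compatible points well below $s^2+s+2$ once $s\geq 4$.
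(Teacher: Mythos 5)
There is a genuine gap. Your setup coincides with the paper's (Lemma~\ref{driehoek}, normalisation of the triangle, Menelaos/Ceva, the rigidity of Lemma~\ref{perpcoll}), but the engine of your plan --- that antipodality forces every slope on a side to be a root of one \emph{fixed} polynomial of bounded degree, so that the $s+1$ distinct slopes on a side give a counting-versus-degree contradiction --- is never derived, and you yourself flag it as the delicate step you do not know how to carry out uniformly in $s$. Nothing in the proposal produces such a polynomial, and there is no reason to expect one: the Menelaos/Ceva identities are multiplicative relations among \emph{products} of many slopes, not constraints on individual slopes, and in the genuinely embeddable cases (orders $2$ and $3$) it is a single parameter, not the whole slope set, that is pinned down by an equation such as $t^2-t+1=0$. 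The fallback you mention (a projectivity of the slope line, or a cross-ratio/conic over-determination) is likewise only gestured at. As written, the proposal is a research plan whose crucial step is conjectural.

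For comparison, the paper's proof extracts a single \emph{global multiplicative invariant} rather than pointwise polynomial constraints: for each vertex $A_i$, let $T_i$ be the product of the slopes of all $s-1$ lines of $\L$ through $A_i$ other than the two sides. A short count ($s^2+s+2>6s-6$) produces a line $\ell\in\L$ avoiding all six points $A_i,A_i^\perp$; every point of $\P$ on $\ell$ is then joined to each vertex, so the non-side lines of $\L$ through $A_i$ are exactly those spanned by the points of $\ell\cap\P$, and Menelaos (Lemma~\ref{menelaos}) applied to $\ell$ combined with Ceva (Lemma~\ref{ceva}) applied to each point of $\ell\cap\P$ off the sides yields $T_1T_2T_3=-1$, i.e.\ Equation~\eqref{eq:ttt}. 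The computation is then repeated for a line $m\in\L$ through $A_1^\perp$ avoiding $A_2,A_3,A_2^\perp,A_3^\perp$: here Lemma~\ref{perpcoll} forces $A_1\in m^\perp$, so in the product $T_1$ the line $m^\perp$ (with slope $\xi$) replaces the non-line $A_1A_1^\perp$, while Ceva is still applied at the point $A_1^\perp$ itself; this gives $T_1T_2T_3=-\xi/(A_1X_1')$ with $X_1'=A_1^\perp$. Comparing the two computations forces $m^\perp=A_1A_1^\perp$, which is absurd because $m^\perp\in\L$ whereas the embedding preserves the non-collinearity of $A_1$ and $A_1^\perp$. This double evaluation of $T_1T_2T_3$ is precisely the uniform algebraic contradiction your sketch was hoping for; the missing idea in your proposal is to aggregate the slopes into the invariants $T_i$ and to perturb the computation through the antipodal point $A_1^\perp$, instead of trying to constrain each slope individually.
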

\begin{proof} Suppose that $(\P,\L)$ is an antipodal plane of order $s$ embedded in $\PG(2,q)$. 
By Lemma \ref{driehoek}, we can find $A_1,A_2,A_3\in \P$ forming a triangle whose sides are lines of $\L$ and such that $A_1^\perp,A_2^\perp,A_3^\perp$ are not contained on the sides of the triangle. Since $\PGL(3,q)$ acts transitively on the triangles of $\PG(2,q)$, we can pick coordinates such that $A_1=(1,0,0),A_2=(0,1,0),A_3=(0,0,1)$.

Consider the point set $K=\{A_1,A_2,A_3,A_1^\perp,A_2^\perp, A_3^\perp\}$. Since $A_i$, $i=1,2,3$ is joined to all other points in $K$, except for $A_i^\perp$, we find that there are at most $6(s+1)-(15-3)=6s-6$ lines of $\mathcal{L}$ containing at least one point of $K$. Since $s^2+s+2>6s-6$, there is a line $\ell$ of $\L$ not through a point of $K$.

Let $\ell\cap A_2A_3=B_1,\ell\cap A_1A_3=B_2,\ell\cap A_1A_2=B_3$. The points $B_i$ are points of $\P$ since the line $\ell$ is not of the form $A_i^\perp A_j^\perp$.
The line $\ell$ contains $s-2$ points of $\P$, not lying on the sides of the triangle $A_1A_2A_3$. Denote these points by $X_j$, $j=1,\ldots, s-2$. The line $A_iX_j$ is a line of $\L$ for all $i=1,2,3$, $j=1,\ldots, s-2$ since $\ell$ does not contain any of the points $A_1^\perp,A_2^\perp, A_3^\perp$.
For each of the points $A_i$, let $T_i$ denote the product of the slopes of the $s-1$ lines of $\P$ through $A_i$, different from the sides of the triangle. 

We find that $$T_1=(A_1B_1)\prod_{j=1}^{s-2}(A_1X_j), \ T_2=(A_2B_2)\prod_{j=1}^{s-2}(A_2X_j), \text{ and }T_3=(A_3B_3)\prod_{j=1}^{s-2}(A_3X_j).$$
By Lemmas \ref{menelaos} and \ref{ceva}, we find that
\begin{align}\label{eq:ttt}
	T_1T_2T_3=-1.
\end{align}

Now consider a line $m$ of $\L$ through $A_1^\perp$ but not through $A_2,A_3,A_2^\perp$ nor $A_3^\perp$. This lines exists since $s+1>4$. Let $m\cap A_2A_3=B'_1,m\cap A_1A_3=B'_2,m\cap A_1A_2=B'_3$. Note that $B'_i$ is in $\P$ since $m$ is not of the form $A_i^\perp A_j^\perp$. Furthermore let $A_{1}^\perp=X'_1,X'_2,\ldots, X'_{s-2}$ be the $s-2$ points of $\P$ on $m$ not on the sides of the triangle. The line $m^\perp$ contains $A_{1}$ and is different from $A_{1}A_{2}$ and $A_{1}A_{3}$; let $\xi$ denote the slope of $m^\perp$. All lines of the form $A_2X'_j$ and $A_3X'_j$ are contained in $\L$ since $m$ does not contain $A_2^\perp$ nor $A_3^\perp$. 
It follows that
\[
T_1=(A_1B'_1)\xi\frac{1}{(A_1X_{1}')}\prod_{j=1}^{s-2}(A_1X'_j), \ T_2=(A_2B'_2)\prod_{j=1}^{s-2}(A_2X'_j), \text{ and }T_3=(A_3B'_3)\prod_{j=1}^{s-2}(A_3X'_j).
\]

By Lemmas \ref{menelaos} and \ref{ceva}, we find that $T_1T_2T_3=-\frac{\xi}{(A_1X_1')}$.
Using Equation \eqref{eq:ttt} we find that $\xi=(A_1X_1')$. The line with slope $\xi$ (that is, $m^\perp$) is in $\L$, but since $X'_1=A_1^\perp$, the line with slope $(A_1X_1')$ is not a line of $\L$, a contradiction. \end{proof}

\section{Code words in \texorpdfstring{$C(2,p^2)^\bot$}{C(2,p2) orth}}

Recall from the introduction that for $\c\in \C(\Pi)^\bot$,  with $\Pi$ a projective plane of order $p^2$, we have that $w(\c)\geq 2p^2-2p+2$ and that the code words meeting the lower bound are characterised. Moreover, we will see that a code word of weight $2p^2-p$ can easily be constructed if $\Pi$ admits an embedded Baer subplane. Note that all translation planes admit embedded Baer subplanes.

\begin{example}\label{excode}  Let $\Pi$ be a projective plane of order $p^{2}$ admitting a Baer subplane $\mathcal{B}$ and  let $\ell$ be a secant to $\mathcal{B}$ (a line meeting $\mathcal{B}$ in $p+1$ points, i.e.~an extended line of $\mathcal{B}$). Then the difference of the incidence vector of $\mathcal{B}$ and $\ell$ defines a code word of $\C(\Pi)^\bot$. To see this, note that $(\mathcal{B},m)=1$ for all lines $m$ of $\PG(2,p^2)$ since any line meets a Baer subplane in $1$ or $p+1$ points. Similarly, $(\ell,m)=1$ since two lines meet in $1$ or $p^{2}+1$ points (if they coincide). We conclude that $(\mathcal{B}-\ell,m)=0$ for all $m$, and hence, that $\mathcal{B}-\ell$ is a code word of $\C(\Pi)^\bot$. Since $w(\mathcal{B})=p^2+p+1$, $w(\ell)=p^2+1$, and $\B$ and $\ell$ have exactly $p+1$ $1$'s in common, we have that $\mathcal{B}-\ell$ has weight $2p^2-p$.
\end{example}

We will study code words in $\C(\Pi)^{\bot}$ with weight between $2p^2-2p+3$ and $2p^2-p$. At least for translation planes, this is sufficient to study the minimum weight code words.

\begin{notation}
	Throughout the rest of the paper we will assume that $\c$ is a code word in $\C(\Pi)^\bot$, with $\Pi$ a projective plane of order $p^{2}$, $p\geq 3$, with $w(\c)=2p^2-2p+2+\epsilon$ where $1\leq\epsilon\leq p-2$, and let $\S=\supp(\c)$. We will denote the elements of $\F_p$ by their unique representative in $\{0,...,p-1\}$, that is, the symbols used in a code word $\c$ belong to the set $\{0,...,p-1\}$.
	\par Let $\K\subseteq\{1,2,\ldots,p-1\}$ be the set of non-zero symbols used in the code word $\c$; we call these symbols the {\em colours} of $\c$. Let $K_\lambda\subseteq \S$ be the set of points that have colour $\lambda$ in $\c$. Then $\S=\cup_{\lambda \in \K}K _\lambda$.
\end{notation}

\subsection{Preliminaries}

Let $\mu(\v)$ denote the sum of all symbols in the vector $\v$ considered as integers. Since the vector $-\v$ has symbol $p-a$ precisely when a vector $\v$ has symbol $a$, we see that
\begin{align}
	\mu(\v)+\mu(-\v)=p\cdot w(\v).\label{summu}
\end{align}
Expressing that $\c$ is a code word in $\C(\Pi)^\bot$ yields that $(\c,\ell)=0$ for every line $\ell$ in $\Pi$. Let $\c|_\ell$ be the vector obtained by taking the restriction of the code word $\c$ to the line $\ell$, then this shows that
\begin{align}
	\mu(\c|_\ell)\equiv 0\pmod{p}.\label{clmod}
\end{align}

Considering Equation \eqref{clmod} for all $p^2+1$ lines through a point not in $supp(\c)$, we see that \begin{align}\mu(\c)\equiv 0\pmod{p}\label{cmod}\end{align} (and similarly $\mu(-\c)\equiv 0\pmod{p}$).

\begin{lemma}\label{2secants}
	Let $P\in \S=supp(\c)$, then the number of $2$-secants to $\S$ through $P$, say $x_P$, satisfies:
	\begin{align}
		x_P\geq 2p+1-\epsilon. \label{lowerbound}
	\end{align}
\end{lemma}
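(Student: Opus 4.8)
The plan is to combine the divisibility condition \eqref{clmod} with a double count of the points of $\mathcal{S}$ lying on the $p^2+1$ lines through $P$. The key preliminary observation is that $\mathcal{S}$ has \emph{no} tangent line through $P$: if some line $\ell$ through $P$ met $\mathcal{S}$ only in $P$, then $\mathbf{c}|_\ell$ would carry a single nonzero symbol, namely the colour of $P$, so $\mu(\mathbf{c}|_\ell)$ would lie in $\{1,\ldots,p-1\}$ and hence be nonzero modulo $p$, contradicting \eqref{clmod}. This is precisely the step where the hypothesis $\mathbf{c}\in\mathcal{C}(\Pi)^\perp$ enters, and it is the only point requiring genuine care; everything after it is bookkeeping. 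Thus every line through $P$ is at least a $2$-secant to $\mathcal{S}$.

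Next I would set up the count. Each point of $\mathcal{S}\setminus\{P\}$ lies on exactly one line through $P$, so if $k_\ell$ denotes the number of points of $\mathcal{S}$ on a line $\ell$ through $P$, then $\sum_\ell (k_\ell-1)=|\mathcal{S}|-1$. The $x_P$ two-secants each contribute $k_\ell-1=1$, while the remaining $p^2+1-x_P$ lines are at least three-secants and contribute $k_\ell-1\geq 2$. This yields
\begin{equation*}
|\mathcal{S}|-1 \;=\; \sum_\ell (k_\ell-1) \;\geq\; x_P + 2\bigl(p^2+1-x_P\bigr) \;=\; 2p^2+2-x_P .
\end{equation*}
Rearranging gives $x_P \geq 2p^2+2-\bigl(|\mathcal{S}|-1\bigr)$, and substituting $|\mathcal{S}|=2p^2-2p+2+\epsilon$ produces $x_P \geq 2p+1-\epsilon$, which is exactly \eqref{lowerbound}.

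Since the whole argument reduces to a single counting inequality, I do not expect a serious obstacle; the crux is just the tangent-free observation above. The remaining things to verify are minor: that $P\in\mathcal{S}$ really guarantees a nonzero colour at $P$ (so that the tangent argument applies), and that the estimate uses only the value of the weight, so the precise range $1\leq\epsilon\leq p-2$ plays no role in this particular bound. I would present the inequality as one clean chain rather than carrying the full secant distribution $\{k_\ell\}$ around.
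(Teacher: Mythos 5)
Your proposal is correct and matches the paper's proof essentially verbatim: the same tangent-free observation (a tangent $\ell$ at $P$ would force $(\c,\ell)$ to equal the nonzero symbol at $P$, contradicting $\c\in\C(\Pi)^\perp$), followed by the same count of points of $\S$ on the $p^2+1$ lines through $P$, with $2$-secants contributing $1$ and all other lines at least $2$. The paper writes the inequality as $2(p^2+1-x_P)+x_P+1\leq |\S|$, which is your chain $\sum_\ell(k_\ell-1)=|\S|-1\geq x_P+2(p^2+1-x_P)$ rearranged, so there is nothing to add.
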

\begin{proof} There are no tangent lines to $\S$: if $m$ would be the incidence vector of a tangent line to $\S$ in the point $P$ then $(m,\c)$ is the symbol at position $P$, a contradiction since $(m,\c)=0$ for all $m$. So if we let $x_P$ denote the number of $2$-secants to $\S$ through $P\in \S$, then there are $p^2+1-x_P$ lines through $P$ with at least $2$ points of $\S$, different from $P$. Counting the points of $\S$ on lines through $P$, we find that $$2(p^2+1-x_P)+x_P+1\leq |\S|,$$ so $x_P\geq 2p-\epsilon+1$.
\end{proof}

\begin{corollary}\label{cor:even}
	The number of colours in $\K$ is even.
\end{corollary}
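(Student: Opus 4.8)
The plan is to prove that the assignment $\lambda\mapsto p-\lambda$ is a fixed-point-free involution on $\K$, which forces $|\K|$ to be even. The whole argument rests on exploiting the $2$-secants whose existence is guaranteed by Lemma \ref{2secants}. First I would fix an arbitrary colour $\lambda\in\K$ and choose a point $P\in K_\lambda$ (such a point exists precisely because $\lambda$ is a used colour). By Lemma \ref{2secants}, the number of $2$-secants through $P$ is at least $2p+1-\epsilon$, and since $\epsilon\leq p-2$ in the standing assumptions this is at least $p+3>0$; hence at least one $2$-secant $\ell$ through $P$ exists (in fact many do, but one suffices).

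Next I would inspect the restriction $\c|_\ell$. By definition of a $2$-secant, $\ell$ meets $\S$ in exactly two points, namely $P$ and some $Q$, carrying colours $\lambda$ and $\mu$ with $\lambda,\mu\in\{1,\dots,p-1\}$, while every other entry of $\c|_\ell$ is zero. Equation \eqref{clmod} then gives $\mu(\c|_\ell)=\lambda+\mu\equiv 0\pmod p$. Since $\lambda+\mu$ lies strictly between $1$ and $2p-1$, the only possibility is $\lambda+\mu=p$, so $Q$ has colour $p-\lambda$. In particular $p-\lambda\in\K$.

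As $\lambda$ was arbitrary, this shows that $\lambda\in\K$ if and only if $p-\lambda\in\K$. Because $p$ is odd, the equation $\lambda=p-\lambda$ (i.e.\ $2\lambda\equiv 0\pmod p$) has no solution in $\{1,\dots,p-1\}$, so the involution $\lambda\mapsto p-\lambda$ has no fixed point on $\K$. Consequently $\K$ decomposes into disjoint pairs $\{\lambda,p-\lambda\}$, and $|\K|$ is even. I do not expect any genuine obstacle here: the only two points needing a moment's care are checking that the bound of Lemma \ref{2secants} is strictly positive in the admissible range of $\epsilon$ (so that a $2$-secant really exists through every point of $\S$), and observing that the involution is fixed-point-free exactly because $p$ is an odd prime.
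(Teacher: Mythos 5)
Your proof is correct and takes essentially the same approach as the paper: both use Lemma \ref{2secants} to produce a $2$-secant through a point of colour $\lambda$, deduce via $(\c,\ell)=0$ that the second support point has colour $p-\lambda$, and conclude that $|\K|$ is even because $\lambda\mapsto p-\lambda$ is a fixed-point-free involution on $\K$ (as $p$ is odd). Your write-up merely makes explicit the computation $\lambda+\mu=p$ that the paper leaves implicit; the only cosmetic quibble is your double use of $\mu$ as both a colour and the symbol-sum function $\mu(\cdot)$.
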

\begin{proof} By Lemma \ref{2secants}, there are at least $2p-\epsilon+1>0$ lines $\ell$ through a point $P\in K_\lambda$ with $|\ell\cap \S|=2$. If $\ell\cap \S=\{P,Q\}$, then $Q$ necessarily has colour $p-\lambda$. Since $p$ is odd, $p-\mu\neq \mu$ for all $\mu\in\{1,2,\ldots,p-1\}$, so the colours come in pairs of the form $\{\mu,p-\mu\}$.\end{proof}

\begin{lemma}\label{boundmu}$$|\mu(\c)-\mu(-\c)|\leq\epsilon p\leq p^2-2p.$$
\end{lemma}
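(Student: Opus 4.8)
The plan is to reformulate the inequality as a two‑sided bound on $\mu(\c)$ and then establish it with a single pencil count. Combining \eqref{summu} and \eqref{cmod}, write $\mu(\c)-\mu(-\c)=2\mu(\c)-pw$; since $\mu(\c)$ and $\mu(-\c)$ are both multiples of $p$, this difference is a multiple of $p$, and a short computation shows that $|\mu(\c)-\mu(-\c)|\le \epsilon p$ is equivalent to
\[
p(p^2-p+1)\ \le\ \mu(\c)\ \le\ p(p^2-p+1)+\epsilon p .
\]
Because $\mu(-\c)=pw-\mu(\c)$, the upper bound for $\mu(\c)$ is exactly the lower bound for $\mu(-\c)$, so it suffices to prove the single inequality $\mu(\c)\ge p^3-p^2+p$ for an arbitrary code word and apply it to both $\c$ and $-\c$. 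The closing inequality $\epsilon p\le p^2-2p$ is immediate from $\epsilon\le p-2$.

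For the lower bound I would fix a point $O\in\S$ and sum \eqref{clmod} over the pencil of the $p^2+1$ lines through $O$. Since $\S$ has no tangents, every line $\ell\ni O$ meets $\S$ in at least two points, so $\mu(\c|_\ell)$ is a strictly positive multiple of $p$ and hence $\mu(\c|_\ell)\ge p$. On the other hand the pencil covers $O$ with multiplicity $p^2+1$ and every other point of $\S$ exactly once, so
\[
\mu(\c)+p^2\lambda_O \;=\; \sum_{\ell\ni O}\mu(\c|_\ell)\;\ge\;(p^2+1)p,
\]
where $\lambda_O$ is the colour of $O$. This yields $\mu(\c)\ge p^3+p-p^2\lambda_O$.

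Choosing $O$ to be a point of smallest colour, the bound reaches the target value $p^3-p^2+p$ precisely when that smallest colour equals $1$; and then, since colours occur in pairs $\{\lambda,p-\lambda\}$ by Corollary \ref{cor:even}, the value $p-1$ is also a colour, so running the identical argument on $-\c$ (which then has a point of colour $1$) gives $\mu(-\c)\ge p^3-p^2+p$, i.e.\ the matching upper bound for $\mu(\c)$, and the proof closes.

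The main obstacle is exactly the hypothesis that $1$ occurs as a colour: for a point of colour $\lambda_O=s$ the pencil bound only gives $\mu(\c)\ge p^3+p-p^2 s$, which falls short of $p^3-p^2+p$ by $p^2(s-1)$ when $s\ge 2$, and no purely linear refinement (averaging the pencil estimate over all points, or weighting the lines) recovers the loss, since $\mu(\c|_\ell)\ge p$ can be simultaneously tight on every line of a pencil. I would remove this obstacle by normalising: as $\C(\Pi)^\perp$ is linear, $\c$ may be replaced by a scalar multiple $a^{-1}\c$ so that $1$ becomes a colour, which is harmless for the structural conclusions drawn afterwards. Making this reduction rigorous for the quantity $\mu$ itself — or, alternatively, supplementing the pencil bound with the congruences forced on the class sizes $n_\lambda,n_{p-\lambda}$ by \eqref{cmod} together with the lower bounds $n_\lambda\ge 2p+1-\epsilon$ coming from Lemma \ref{2secants} — is the step I expect to require the most care.
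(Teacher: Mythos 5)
Your argument is precisely the paper's proof: the paper likewise reduces the two-sided bound to showing $\mu(\c)\geq p^3-p^2+p$, obtains this by summing $\mu(\c|_\ell)\geq p$ over the $p^2+1$ lines through a point of $K_1$, applies the identical bound to $-\c$ (legitimate because $p-1$ occurs whenever $1$ does, via the $2$-secants of Lemma \ref{2secants}), and combines with $\mu(\c)+\mu(-\c)=p\cdot w(\c)$ from \eqref{summu}. The normalisation you flag as the delicate step is exactly what the paper does as a bare one-line WLOG (``by possibly taking a suitable scalar multiple of $\c$, we may assume without loss of generality that $1$ occurs in $\c$''), and although you are right that the quantity $|\mu(\c)-\mu(-\c)|$ is not scaling-invariant, this causes no harm in context: the lemma is invoked only in Lemma \ref{0ofp}, inside the two-colour subsection whose standing assumption is that $\c$ has already been scaled so that its colours are $1$ and $p-1$, so the inequality is only ever applied to code words for which your pencil argument goes through directly.
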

\begin{proof} By possibly taking a suitable scalar multiple of $\c$, we may assume without loss of generality that $1$ occurs in $\c$. Note that then $p-1$ also occurs in $\c$. Let $P\in K_1$ and let $\ell$ be a line through $P$. Since $\mu(\c|_\ell)>0$, Equation \eqref{clmod} implies that $\mu(\c|_\ell)\geq p$. Looking at $\c|_\ell$ for all lines $\ell$ through $P$, we see that $\mu(\c)\geq(p^2+1)(p-1)+1=p^3-p^2+p$. The same argument holds for $-\c$ since 1 occurs in it, so $\mu(\c)+\mu(-\c)\geq 2(p^3-p^2+p)$. Since the sum of $\mu(\c)$ and $\mu(-\c)$ is $p. w(\c)$, and $w(\c)=2p^2-2p+2+\epsilon$, we find that $\mu(\c)+\mu(-\c)=2p^3-2p^2+2p+\epsilon p$. Hence, the difference between $\mu(\c)$ and $\mu(-\c)$ is at most $2p^3-2p^2+2p+\epsilon p-2(p^3-p^2+p)=\epsilon p$. The second inequality follows since $\epsilon\leq p-2$.
\end{proof} 

\subsection{Code words with two colours}

In this subsection we will investigate code words that have only two different colours. By possibly taking a suitable scalar multiple of $\c$, we may assume without loss of generality that $1$ occurs in $\c$, so the two colours are $1$ and $p-1$. This assumption is made throughout this subsection.

\begin{lemma}\label{0ofp} If $K_\lambda=\emptyset$ for all $\lambda\notin\{ 1,p-1\}$, then  $||K_1|-|K_{p-1}||\in \{0,p\}$. Moreover, if $||K_1|-|K_{p-1}||=p$, then $\epsilon=p-2$.

\end{lemma}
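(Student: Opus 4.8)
The plan is to read off the two invariants $\mu(\c)$ and $\mu(-\c)$ directly from the colour counts, and then combine the single divisibility constraint of Equation \eqref{cmod} with the size bound of Lemma \ref{boundmu}. Write $a=|K_1|$ and $b=|K_{p-1}|$, so that $a+b=w(\c)=2p^2-2p+2+\epsilon$. Since the only symbols occurring are $1$ and $p-1$, the symbol-sums are immediate: $\mu(\c)=a+(p-1)b$ and, dually, $\mu(-\c)=(p-1)a+b$.

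The first step is to extract the divisibility. Reducing modulo $p$ gives $\mu(\c)=a+(p-1)b\equiv a-b\pmod p$, while Equation \eqref{cmod} forces $\mu(\c)\equiv 0\pmod p$; hence $p\mid\big(|K_1|-|K_{p-1}|\big)$. This already confines $|K_1|-|K_{p-1}|$ to a multiple of $p$, so the remaining content of the lemma is to bound the size of that multiple.

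The second step supplies the bound. A short computation gives $\mu(\c)-\mu(-\c)=(a-b)(2-p)$, so that $|\mu(\c)-\mu(-\c)|=(p-2)\,\big||K_1|-|K_{p-1}|\big|$. Substituting into Lemma \ref{boundmu}, which asserts $|\mu(\c)-\mu(-\c)|\leq\epsilon p\leq p^2-2p=p(p-2)$, and dividing through by the positive quantity $p-2$, yields $\big||K_1|-|K_{p-1}|\big|\leq p$. Together with the divisibility from the first step, the only options are $\big||K_1|-|K_{p-1}|\big|\in\{0,p\}$, which is the first claim. For the ``moreover'' part, assume $\big||K_1|-|K_{p-1}|\big|=p$; then the same chain reads $p(p-2)=(p-2)\,\big||K_1|-|K_{p-1}|\big|=|\mu(\c)-\mu(-\c)|\leq\epsilon p$, giving $\epsilon\geq p-2$, which with the standing hypothesis $\epsilon\leq p-2$ forces $\epsilon=p-2$.

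I do not anticipate a genuine obstacle: the argument is self-contained given Equation \eqref{cmod} and Lemma \ref{boundmu}. The only points requiring care are that $p-2>0$ (guaranteed by $p\geq 3$), so that dividing the inequality by $p-2$ preserves its direction, and the sign appearing in $\mu(\c)-\mu(-\c)=(a-b)(2-p)$, since an error there would corrupt the final factor $(p-2)$; passing to absolute values removes any ambiguity coming from the direction of the difference.
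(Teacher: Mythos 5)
Your proof is correct and follows essentially the same route as the paper: both extract $p\mid |K_1|-|K_{p-1}|$ from Equation \eqref{cmod} via $\mu(\c)=|K_1|+(p-1)|K_{p-1}|$, and both combine $\mu(\c)-\mu(-\c)=(2-p)\bigl(|K_1|-|K_{p-1}|\bigr)$ with Lemma \ref{boundmu} to bound $\bigl||K_1|-|K_{p-1}|\bigr|\leq p$, with equality forcing $\epsilon=p-2$. The only cosmetic difference is that the paper writes the bound as $\bigl||K_1|-|K_{p-1}|\bigr|\leq \frac{\epsilon p}{p-2}$ before invoking $\epsilon\leq p-2$, whereas you substitute that inequality first; the argument is identical.
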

\begin{proof} Recall from Equation \eqref{cmod} that $\mu(\c)\equiv 0\pmod{p}$. We see that  $|K_1|+(p-1)|K_{p-1}|=\mu(\c)$ so $|K_1|-|K_{p-1}|\equiv \mu(\c)\equiv 0\pmod{p}$.
Since $|K_1|+(p-1)|K_{p-1}|=\mu(\c)$ and $(p-1)|K_1|+|K_{p-1}|=\mu(-\c)$, we have $\mu(\c)-\mu(-\c)=(2-p)(|K_1|-|K_{p-1}|)$. As $|\mu(\c)-\mu(-\c)|\leq \epsilon p$ by Lemma \ref{boundmu}, we have that $||K_1|-|K_{p-1}||\leq \frac{\epsilon p}{p-2}$. Since $\epsilon\leq p-2$ and $|K_1|-|K_{p-1}|\equiv 0\pmod{p}$, it follows that $||K_1|-|K_{p-1}||\in \{0,p\}$, and that $||K_1|-|K_{p-1}||=p$ is only possible if $\epsilon=p-2$.
\end{proof}

\begin{theorem}\label{isbaer}
	If $||K_1|-|K_{p-1}||=p$ then $\Pi$ admits a Baer subplane and $\c$ is given by Example \ref{excode}. \end{theorem}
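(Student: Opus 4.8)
The plan is to reconstruct the configuration of Example \ref{excode} by first exhibiting the secant line $\ell$ explicitly and then recognising $\c+\v_\ell$ as the incidence vector of a Baer subplane. I would begin by normalising: after scaling the two colours are $1$ and $p-1$, and after possibly replacing $\c$ by $-\c$ we may assume $|K_1|\geq|K_{p-1}|$. Then the hypothesis $||K_1|-|K_{p-1}||=p$ together with Lemma \ref{0ofp} forces $\epsilon=p-2$, hence $w(\c)=2p^2-p$, $|K_1|=p^2$ and $|K_{p-1}|=p^2-p$. The whole argument reduces to producing a line $\ell$ with $K_{p-1}\subseteq\ell$ and $K_1\cap\ell=\emptyset$.

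For a line $m$ set $a_m=|m\cap K_1|$ and $b_m=|m\cap K_{p-1}|$. Since $(\c,m)=0$ and the colours are $1$ and $-1$, we have $a_m\equiv b_m\pmod p$; write $a_m-b_m=p\,d_m$. The standard incidence counts $\sum_m a_m=(p^2+1)p^2$, $\sum_m b_m=(p^2+1)(p^2-p)$, $\sum_m\binom{a_m}{2}=\binom{p^2}{2}$, $\sum_m\binom{b_m}{2}=\binom{p^2-p}{2}$ and $\sum_m a_mb_m=p^2(p^2-p)$ yield, after a routine calculation,
\begin{align*}
\sum_m d_m=p^2+1,\qquad \sum_m d_m(d_m-1)=p^2-p.
\end{align*}
Moreover $a_m+b_m\leq p^2+1$ and $a_m,b_m\geq0$ force $-(p-1)\leq d_m\leq p$, and one checks that $d_m=-(p-1)$ holds exactly when $a_m=0$ and $b_m=p^2-p$, i.e.\ when $m$ contains all of $K_{p-1}$ and no point of $K_1$. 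The key point is that a single such line already realises the entire ``budget'' $p^2-p=(p-1)p$ in the second identity, so all remaining lines would be forced to have $d_m\in\{0,1\}$.

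\textbf{The main obstacle} is to prove that such a line exists, i.e.\ that $K_{p-1}$ is collinear. The second identity is consistent with the negativity being spread over several lines with $-(p-2)\leq d_m\leq-1$, or partly compensated by lines with $d_m\geq 2$, so the moment identities alone do not suffice and a genuinely geometric concentration argument is needed. My plan is to combine the budget with Lemma \ref{2secants}: the $2$-secants to $\S$ are exactly the lines with $a_m=b_m=1$ (hence $d_m=0$), each joining a point of $K_1$ to a point of $K_{p-1}$, and every point of $\S$ lies on at least $p+3$ of them. I would exploit this strong, uniform supply of $d_m=0$ lines to show that the negative $d_m$ cannot be diffused, forcing a unique line $\ell$ with $d_\ell=-(p-1)$; I expect this to be the technical heart of the proof. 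Once $K_{p-1}\subseteq\ell$ is known, the companion condition $a_\ell=0$ is immediate: a point $P\in K_1\cap\ell$ could be joined to a colour-$(p-1)$ point only through $\ell$ itself (as $K_{p-1}\subseteq\ell$), so $P$ would lie on no $2$-secant, contradicting Lemma \ref{2secants}.

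With $\ell$ in hand, $\ell\cap\S=K_{p-1}$, so $\ell\setminus\S$ has $p+1$ points and $\mathcal{B}:=\supp(\c+\v_\ell)=K_1\cup(\ell\setminus K_{p-1})$ is a genuine $0/1$-vector of size $p^2+p+1$. For every line $m$ we have $(\c+\v_\ell,m)=(\c,m)+|\ell\cap m|\equiv1\pmod p$, so $|\mathcal{B}\cap m|\equiv1\pmod p$; writing $|\mathcal{B}\cap m|-1=p\,k_m$ with $k_m\geq0$, the incidence counts for $\mathcal{B}$ give $\sum_m k_m=\sum_m k_m^2=p^2+p+1$, whence $\sum_m k_m(k_m-1)=0$ and therefore $k_m\in\{0,1\}$ for every line. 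Thus $\mathcal{B}$ meets every line in $1$ or $p+1$ points; through each of its points there are then exactly $p+1$ secants, each carrying $p$ further points, so $\mathcal{B}$ is a Baer subplane. Finally $\ell$ meets $\mathcal{B}$ in the $p+1$ points of $\ell\setminus K_{p-1}$, so $\ell$ is a secant, and $\c=\v_{\mathcal{B}}-\v_\ell$, which is precisely the code word of Example \ref{excode}.
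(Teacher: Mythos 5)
Your normalisation, the moment identities, and the entire second half of the argument are correct: both identities check out ($\sum_m d_m=p^2+1$ and $\sum_m d_m(d_m-1)=p^2-p$ follow from the standard counts you list), and once a line $\ell$ with $K_{p-1}\subseteq\ell$ and $K_1\cap\ell=\emptyset$ is available, your passage to $\c+\v_\ell$, the computation $\sum_m k_m=\sum_m k_m^2=p^2+p+1$ forcing $k_m\in\{0,1\}$, and the recognition of a Baer subplane are all sound --- indeed this gives a slightly slicker finish than the paper, which instead constructs the subplane as $K_1\cup\A$ (with $\A$ the $p+1$ points of $\ell$ outside $\S$) and verifies the plane axioms directly. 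However, there is a genuine gap, and you have located it yourself: the collinearity of $K_{p-1}$ is never proven. The budget identity alone permits, for instance, several lines with $d_m\in\{-3,-2,-1\}$ whose total contribution to $\sum_m d_m(d_m-1)$ stays within $p^2-p$, compensated in the first identity by lines with $d_m\geq 2$; your plan to exclude such diffusion using the supply of $2$-secants from Lemma \ref{2secants} is stated only as an expectation, with no argument. Since this step is the entire geometric content of the theorem, the proposal is incomplete as it stands.

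For comparison, the paper closes exactly this step with a short direct count that needs none of the moment machinery. Fix $P\in K_1$: at most $|K_{p-1}|=p^2-p$ of the $p^2+1$ lines through $P$ meet $K_{p-1}$, so at least $p+1$ avoid it, and on each avoiding line the number of points of $K_1$ is divisible by $p$, hence at least $p-1$ points of $K_1\setminus\{P\}$ lie on it. Since $(p+1)(p-1)=p^2-1=|K_1|-1$, equality is forced everywhere: exactly $p+1$ lines through $P$ carry $p$ points of $K_1$ and avoid $K_{p-1}$, while the remaining $p^2-p$ lines through $P$ are $2$-secants, each containing exactly one point of $K_{p-1}$. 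In particular, every line meeting both $K_1$ and $K_{p-1}$ is a $2$-secant; hence through any $Q\in K_{p-1}$ the $p^2$ points of $K_1$ consume $p^2$ distinct lines, leaving a unique further line through $Q$, which must therefore contain all of $K_{p-1}$ and no point of $K_1$. If you splice this count in place of your unproven concentration step (which also renders your separate derivation of $K_1\cap\ell=\emptyset$ redundant), the remainder of your write-up completes the proof.
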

\begin{proof}

	From Lemma \ref{0ofp} it follows that $\epsilon=p-2$, so $|K_1|+|K_{p-1}|=2p^2-p$. We may suppose without loss of generality that $|K_1|\geq |K_{p-1}|$, so we have $|K_1|=p^2$ and $|K_{p-1}|=p^2-p$. Through a point $P$ of $K_{1}$ there are at least $p+1$ lines not containing a point of $K_{p-1}$, and these lines must contain at least $p-1$ points of $K_{1}$ different from $P$. It follows that $P$ lies on exactly $p+1$ lines with $p-1$ points of $K_1\setminus\{P\}$, and $p^2-p$ lines containing exactly one point of $K_{p-1}$.
	\par Now consider a point $Q\in K_{p-1}$. A line through $Q$ contains at most $1$ point of $K_1$, and if it does, it contains no other points of $\S$. Since there are $p^2$ points in $K_1$, it follows that there is a unique line $\ell$ through $Q$ containing all points of $K_{p-1}$. We also see that $\ell$ does not contain points of $K_1$. Let $\A$ be the set of $p+1$ points on $\ell$ that are not in $K_{p-1}$.
	\par For any point in $K_{1}$ the $p+1$ lines through it without points of $K_{p-1}$ meet $\ell$ in a point of $\A$ and contain $p$ points of $K_1$. For any point in $\A$ there are $p$ lines through it containing $p$ points of $K_{1}$. So, $K_1\cup\A$ is a set of $p^2+p+1$ points such that every point lies on $p+1$ lines with $p+1$ points in $K_1\cup\A$, and two points of $K_1\cup\A$ determine a unique line with $p+1$ points in this set. We see that $K_1\cup\A$ forms a projective plane of order $p$ embedded in $\Pi$, hence it is a Baer subplane $\B$.
	\par Note that $\ell$ is a secant line to $\B$, and the points of $K_{p-1}$ are precisely those points on this line that are not contained in $\B$. Hence, $\c$ is the difference of the incidence vectors of $\B$ and $\ell$ as in Example \ref{excode}.
\end{proof}

\begin{lemma}
	If $|K_1|=|K_{p-1}|$, then there is no line through a point of $K_1$ that contains $p$ points of $K_{p-1}$. Likewise, there is no line through a point of $K_{p-1}$ that contains $p$ points of $K_1$. 
\end{lemma}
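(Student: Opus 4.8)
The plan is to argue by contradiction, adapting the point-counting method used in the proof of Theorem~\ref{isbaer}. First, replacing $\c$ by $-\c$ interchanges the roles of $K_1$ and $K_{p-1}$, keeps only the colours $1$ and $p-1$, and preserves both the weight and the hypothesis $|K_1|=|K_{p-1}|$; hence it suffices to prove the first statement, and the ``likewise'' assertion will follow automatically. Write $N:=|K_1|=|K_{p-1}|=\tfrac12 w(\c)=p^2-p+1+\tfrac{\epsilon}{2}$ (so $\epsilon$ is even), and record the crucial inequality $N<p^2$, which holds because $\epsilon\leq p-2$.

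Next I would extract the relevant local congruence. For any line $m$, Equation~\eqref{clmod} gives $\mu(\c|_m)\equiv 0\pmod p$, and since only the symbols $1$ and $p-1$ occur this reads $|m\cap K_1|\equiv |m\cap K_{p-1}|\pmod p$. Suppose, for contradiction, that a line $\ell$ passes through $P\in K_1$ and meets $K_{p-1}$ in exactly $p$ points. Then $|\ell\cap K_{p-1}|=p\equiv 0$, while $|\ell\cap K_1|\geq 1$ because $P\in\ell$, so the congruence forces $|\ell\cap K_1|\geq p$; in particular $\ell$ carries at least $p-1$ points of $K_1\setminus\{P\}$. The same congruence shows that any line through $P$ containing no point of $K_{p-1}$ also has at least $p$ points of $K_1$, hence at least $p-1$ points of $K_1\setminus\{P\}$.

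The heart of the argument is then a count of the lines through $P$. Since $P\notin K_{p-1}$, the $N$ points of $K_{p-1}$ are distributed over the lines through $P$, with exactly $p$ of them lying on the single line $\ell$; the remaining $N-p$ points occupy at most $N-p$ further lines. Thus at most $1+(N-p)$ lines through $P$ meet $K_{p-1}$, so at least $p^2+p-N$ of the $p^2+1$ lines through $P$ avoid $K_{p-1}$ entirely. Each such line, together with $\ell$, contributes at least $p-1$ points of $K_1\setminus\{P\}$, and these point sets are pairwise disjoint because distinct lines through $P$ meet only in $P$. Therefore
\[
N-1=|K_1|-1\geq (p-1)\bigl((p^2+p-N)+1\bigr)=(p^3-1)-(p-1)N,
\]
which rearranges to $pN\geq p^3$, i.e.\ $N\geq p^2$, contradicting $N<p^2$.

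I expect the main obstacle to be identifying the right quantitative input rather than any delicate estimate: the whole argument hinges on the observation that a line meeting $K_{p-1}$ in $p$ points is forced by the mod-$p$ congruence to be ``$K_1$-heavy'' as well, while simultaneously absorbing $p$ of the scarce $K_{p-1}$-points onto one pencil line, which inflates the number of $K_{p-1}$-free lines through $P$ enough to overshoot the supply of $K_1$-points. The only points requiring care are the bookkeeping — keeping the contributions of the $K_{p-1}$-free lines disjoint from that of $\ell$ — and verifying that $N<p^2$ is genuinely guaranteed by $\epsilon\leq p-2$, so that the concluding inequality $N\geq p^2$ is a true contradiction for every $p\geq 3$.
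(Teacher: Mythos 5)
Your proof is correct, and it takes a genuinely different route from the paper's. The paper works with the symbol-sum $\mu$: from $|K_1|=|K_{p-1}|$ and Equation \eqref{summu} it gets the exact value $\mu(\c)=\tfrac{p}{2}w(\c)\leq p^3-p^2+p+\tfrac{p(p-2)}{2}$, and then observes, as in the proof of Lemma \ref{boundmu}, that a line through $P\in K_1$ with $p$ points of $K_{p-1}$ forces $\mu(\c)\geq p^2(p-1)+p(p-1)+1=p^3-p+1$, a contradiction; the second assertion follows by passing to $-\c$, exactly as in your reduction. You instead avoid $\mu$ entirely and count cardinalities: from \eqref{clmod} you extract the congruence $|m\cap K_1|\equiv|m\cap K_{p-1}|\pmod p$ on every line $m$, deduce that the offending line $\ell$ and every $K_{p-1}$-free line through $P$ each carry at least $p-1$ points of $K_1\setminus\{P\}$, and use the pigeonhole fact that $\ell$ absorbs $p$ of the $N=\tfrac12 w(\c)<p^2$ points of $K_{p-1}$ to produce at least $p^2+p-N$ such $K_{p-1}$-free lines, whence $N-1\geq(p-1)(p^2+p-N+1)$ gives $N\geq p^2$, a contradiction. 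All the steps check out: the disjointness across the pencil at $P$ is valid, $(p-1)(p^2+p+1)=p^3-1$ gives the stated rearrangement, and $\epsilon\leq p-2$ does guarantee $N<p^2$ for all $p\geq3$. Your argument uses a strictly weaker input than the paper's (only the per-line congruence and $w(\c)\leq 2p^2-p$, never the exact value of $\mu(\c)$), and as a byproduct it shows the offending line would itself have to meet $K_1$ in at least $p$ points; the paper's proof, by contrast, is shorter because the $\mu$-machinery of Lemma \ref{boundmu} is already in place, and it localises the contradiction in a single weighted count over the pencil through $P$.
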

\begin{proof} If $|K_1|=|K_{p-1}|$, then $\mu(\c)=\mu(-\c)$, and hence, by Equation \eqref{summu}, $\mu(\c)=\mu(-\c)=\frac{p}{2}w(\c)$. It follows that $\mu(\c)=\mu(-\c)=p^3-p^2+p+\frac{\epsilon p}{2}\leq p^3-p^2+p+\frac{p(p-2)}{2}$.

Now suppose that that there is a line through a point of $K_1$ with $p$ points of $K_{p-1}$, then $\mu(\c)$ is at least  $p^2(p-1)+p(p-1)+1$, a contradiction since $p^3-p+1>p^3-p^2+p+\frac{p(p-2)}{2}$. Furthermore, if there were a line through a point of $K_{p-1}$ that contains $p$ points of $K_1$, then the code word $-\c$ has a line through a point with colour $1$ containing $p$ points with colour $p-1$, leading to a contradiction in the same way.
\end{proof}

\begin{corollary}\label{gevolg} 
	If $|K_1|=|K_{p-1}|$, then a line through a point of $K_1$ and a point of $K_{p-1}$ cannot contain $p$ points of $K_1$ nor $p$ points of $K_{p-1}$.
\end{corollary}

\begin{theorem}\label{isantipodal} If $|K_1|=|K_{p-1}|=p^2-p+2$, then $K_1$ forms an antipodal plane of order $p-1$ embedded in $\Pi$.
\end{theorem}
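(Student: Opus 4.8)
The plan is to localise the divisibility constraint \eqref{clmod} at a single point, pin down the intersection types of the lines through each point of $K_1$, and then exhibit the antipodal plane on the lines meeting $K_1$ in exactly $p$ points. Throughout, $\epsilon=2$ and $w(\c)=2p^2-2p+4$, and since $|K_1|=|K_{p-1}|$ we have by \eqref{summu} that $\mu(\c)=\mu(-\c)=\tfrac p2 w(\c)=p^3-p^2+2p$. For a line $\ell$ write $a_\ell=|\ell\cap K_1|$ and $b_\ell=|\ell\cap K_{p-1}|$, so that $\mu(\c|_\ell)=a_\ell+(p-1)b_\ell$ is a nonnegative multiple of $p$.

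First I would fix $P\in K_1$ and sum over the $p^2+1$ lines through it. Each point other than $P$ is counted once and $P$ is counted $p^2+1$ times, so (as $P$ has symbol $1$) $\sum_{\ell\ni P}\mu(\c|_\ell)=\mu(\c)+p^2=p^3+2p$. Every term is at least $a_\ell\geq 1$ and divisible by $p$, hence at least $p$; since the sum exceeds $p(p^2+1)=p^3+p$ by exactly $p$, precisely one line through $P$ — the \emph{special line} of $P$ — has $\mu(\c|_\ell)=2p$, while the remaining $p^2$ lines have $\mu(\c|_\ell)=p$. Solving $a+(p-1)b\in\{p,2p\}$ with $a\geq 1$, the $p^2$ ordinary lines are of type $(a,b)=(1,1)$ or $(p,0)$, and the special line is of type $(2,2)$, $(p+1,1)$, or $(2p,0)$.

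Next I would eliminate the two unwanted special types. A $(p+1,1)$-line satisfies $\mu(-\c|_\ell)=1+(p+1)(p-1)=p^2$; but running the same localisation for $-\c$ at the unique point of $\ell\cap K_{p-1}$ forces every line through that point to have $\mu(-\c|_\cdot)\in\{p,2p\}$, incompatible with $p^2$ for $p\geq 3$. The hard part is excluding $(2p,0)$-lines, and this is where I expect the main obstacle: global incidence counts alone remain consistent with their existence, so a genuinely planar argument is needed. Suppose $\ell$ is a $(2p,0)$-line. Each of its $2p$ points $P\in K_1$ has $\ell$ as its special line, so $P$ lies on exactly $p-2$ lines of type $(p,0)$, and these partition the $(p-1)(p-2)$ points of $K_1$ off $\ell$ into $p-2$ blocks of size $p-1$. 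Double-counting, over the $2p$ points of $\ell$, the pairs of off-$\ell$ points lying in a common block, each such pair is counted at most once: if a pair shared a block for two points $P,P'\in\ell$, the line joining the pair would contain both $P$ and $P'$ and hence equal $\ell$, contradicting that the pair lies off $\ell$. This gives $2p(p-2)\binom{p-1}{2}\leq\binom{(p-1)(p-2)}{2}$, which simplifies to $p^2-p-1\leq 0$, false for $p\geq 2$; hence no $(2p,0)$-line exists.

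It then follows that every special line is of type $(2,2)$, so each $P\in K_1$ lies on exactly $p$ lines of type $(p,0)$. Setting $\P=K_1$ and letting $\L$ be the set of lines of $\Pi$ meeting $K_1$ in exactly $p$ points, every line of $\L$ has $p=(p-1)+1$ points and every point of $\P$ lies on $p$ lines of $\L$; double-counting flags gives $|\L|=|K_1|=p^2-p+2=(p-1)^2+(p-1)+2$, and $(\P,\L)$ is a partial linear space since two points of $\Pi$ lie on at most one common line. Hence $(\P,\L)$ is an antipodal plane of order $p-1$ embedded in $\Pi$, as required.
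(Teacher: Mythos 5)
Your proof is correct, and while its skeleton matches the paper's --- both start from $\mu(\c)=\mu(-\c)=p^3-p^2+2p$, localise the divisibility condition \eqref{clmod} at a point of $K_1$ to get exactly one line with $\mu(\c|_\ell)=2p$ and $p^2$ lines with $\mu(\c|_\ell)=p$, classify the types $(a_\ell,b_\ell)$, exclude the bad special types, and read off the antipodal plane from the $p$-secants --- your two exclusion steps are genuinely different from the paper's, and in one case stronger. For the type $(p+1,1)$, the paper invokes its earlier Lemma 3.6/Corollary \ref{gevolg} (no line through points of both colours carries $p$ points of one colour), proved via a global bound on $\mu(\c)$; you instead rerun the localisation for $-\c$ at the unique point of $\ell\cap K_{p-1}$ and observe that $\mu(-\c|_\ell)=p^2\notin\{p,2p\}$ --- a self-contained local argument that makes the theorem independent of those preliminary results. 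For the type $(2p,0)$, the paper picks a point $Q\in K_1$ off the putative $2p$-secant and compares the $2p$ joining lines with the at most roughly $p+4$ non-tangent lines through $Q$, an argument that explicitly needs $p\geq 5$; your double count of pairs of off-line points of $K_1$ lying in a common $(p,0)$-block --- each pair counted at most once because two points of $\ell$ on the joining line would force that line to equal $\ell$ --- yields $2p(p-2)\binom{p-1}{2}\leq\binom{(p-1)(p-2)}{2}$, i.e. $p^2-p-1\leq 0$, which fails for every $p\geq 3$. So your version is marginally more general (valid already for $p=3$, where the paper's count is inconclusive) and more self-contained, at the cost of a slightly longer computation; the paper's version buys brevity by reusing machinery it has anyway for the other lemmas in that subsection. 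One small remark: your aside that ``global incidence counts alone remain consistent'' with $(2p,0)$-lines undersells the paper's exclusion, which is also just an incidence count, only from a different vantage point.
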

\begin{proof}  
We have $\mu(\c)=|K_{1}|+(p-1)|K_{p-1}|=p^3-p^2+2p=(p-1)|K_{1}|+|K_{p-1}|=\mu(-\c)$.  Recall that every line $\ell$ through a point $P\in K_1$ has $\mu(\ell)$ a multiple of $p$. This means that there are $p^2$ lines $m$ through $P$ with $\mu(\c|_m)=p$ and exactly one line, say $n$ with $\mu(n)=2p$.

A line $m$ with $\mu(m)=p$ is either a $p$-secant to $K_1$ (having $p$ points with symbol 1) and disjoint from $K_{p-1}$, or a $2$-secant to $supp(\c)$, consisting of one point of $K_1$ and one point of $K_{p-1}$. 
Since $|K_{p-1}|=p^2-p+2$, and the line $n$ contains at most $2$ points of $K_{p-1}$, we know that at least $p^2-p-3$ lines through $P$ are tangent to $K_1$.
The line $n$ with $\mu(n)=2p$ either consists of $2p$ points of $K_1$ or has at least one point of $supp(\c)$ with symbol $p-1$.
From Corollary \ref{gevolg}, we have that in the latter case, the line $n$ contains $2$ points of $K_1$ and $2$ points of $K_{p-1}$. 

Now assume that the line $n$ has $2p$ points of $K_1$ and consider a point $Q$ of $K_1$, not on $n$; such a point exists since $p>2$. Then there are at least $2p$ lines through $Q$ containing at least one point of $K_1$, different from $Q$, a contradiction since a point of $K_1$ lies on at least $p^2-p-3$ tangent lines to $K_1$ and $p\geq5$.

So that means that through every point $P$ with symbol $1$ there is a $2$-secant to $K_1$, meeting $supp(\c)$ in $4$ points: $2$ points of $K_1$ and $2$ of $K_{p-1}$. The other $p^2-p$ points of $K_1$ lie on $p$ lines through $P$ each containing $p$ points of $K_1$. So $K_1$ is a
the set of $p^2-p+2$ points forming an antipodal plane embedded in $\Pi$.
\end{proof}

\begin{corollary}\label{2coloursnieuw}
	Let $\Pi$ be a projective plane of order $p^2$, $p$ prime, $p\geq 3$. Let $\c$ be a code word of $\C(\Pi)^\bot$ using only two symbols, with $2p^2-2p+2\leq w(\c)\leq 2p^2-p$. Then either 
		\begin{itemize}
		\item $w(\c)=2p^2-2p+2$ and $\c=\lambda(\v_1-\v_2)$ with $\lambda\in\F_{p}^{*}$ and $\v_1,\v_2$ the incidence vectors of two disjoint projective planes of order $p-1$ embedded in $\Pi$; or
		\item $w(\c)=2p^2-2p+4$ and $\c=\lambda(\v_1-\v_2)$ with $\lambda\in\F_{p}^{*}$ and $\v_1,\v_2$ the incidence vectors of two disjoint antipodal planes of order $p-1$ embedded in $\Pi$; or
		\item $2p^2-2p+6\leq w(\c)\leq 2p^2-p-1$, $w(\c)$ is even, and $\c=\lambda(\v_1-\v_2)$ with $\lambda\in\F_{p}^{*}$ and $\v_1,\v_2$ the incidence vectors of two sets of size $\frac{w(\c)}{2}$; or
		\item $w(\c)=2p^2-p$, $\Pi$ contains an embedded subplane of order $p$ (i.e., a Baer subplane), and $\c$ is given by Example \ref{excode}.
	\end{itemize}
\end{corollary}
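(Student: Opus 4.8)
The plan is to obtain this classification as a collation of the structural results of this subsection, organised as a short case analysis on the value of $||K_1|-|K_{p-1}||$. First I would record the normalisation: a code word using exactly two symbols has, by Corollary \ref{cor:even}, its two colours forming a pair $\{\mu,p-\mu\}$, so after replacing $\c$ by the scalar multiple $\mu^{-1}\c$ (still a word of $\C(\Pi)^\bot$) the colours become $1$ and $p-1$. Then $\c=\lambda(\v_1-\v_2)$, where $\v_1,\v_2$ are the incidence vectors of the two disjoint colour classes $K_1,K_{p-1}$ and $\lambda=\mu$. The boundary weight $w(\c)=2p^2-2p+2$ lies below the standing assumption $\epsilon\geq 1$ of the Notation and must be handled separately: by Result \ref{bag} the support splits into two disjoint $2-(p^2-p+1,p,1)$-designs, i.e.\ two disjoint projective planes of order $p-1$, using only the symbols $\lambda,-\lambda$; this yields the first bullet.

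For the remaining range $2p^2-2p+3\leq w(\c)\leq 2p^2-p$, so $\epsilon\geq 1$, I would invoke Lemma \ref{0ofp}, which gives the dichotomy $||K_1|-|K_{p-1}||\in\{0,p\}$. In the case $||K_1|-|K_{p-1}||=p$, Lemma \ref{0ofp} also forces $\epsilon=p-2$, hence $w(\c)=2p^2-p$, and Theorem \ref{isbaer} then identifies $\Pi$ as containing a Baer subplane with $\c$ the word of Example \ref{excode}; this is the fourth bullet, and I would note that it is the only way to realise the odd weight $2p^2-p$.

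In the complementary case $|K_1|=|K_{p-1}|$ the weight $w(\c)=2|K_1|$ is even, so $\epsilon$ is even and, being at least $1$, satisfies $\epsilon\geq 2$. If $\epsilon=2$, then $|K_1|=|K_{p-1}|=p^2-p+2$ and Theorem \ref{isantipodal} shows that $K_1$ is an antipodal plane of order $p-1$ embedded in $\Pi$; applying the same theorem to the word $-\c$ (whose colour-$1$ class is $K_{p-1}$) shows that $K_{p-1}$ is likewise an antipodal plane, and the two are disjoint, giving the second bullet. If instead $\epsilon\geq 4$, then $w(\c)\geq 2p^2-2p+6$; since $w(\c)$ is even while $2p^2-p$ is odd, the hypothesis $w(\c)\leq 2p^2-p$ sharpens to $w(\c)\leq 2p^2-p-1$, and we are in the third bullet with $|K_1|=|K_{p-1}|=w(\c)/2$ and no further structure claimed.

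Since the structural theorems of this subsection already do all the geometric work, the proof is essentially bookkeeping; the point that requires care is verifying that these four cases partition the full interval $[2p^2-2p+2,\,2p^2-p]$ without gap or overlap. The two arithmetic observations that make this clean are that the equal-size branch forces even weight, so it can never reach the odd value $2p^2-p$ (which must therefore arise from the unequal branch via Theorem \ref{isbaer}), and that the boundary value $2p^2-2p+2$ has to be pulled out and treated by Result \ref{bag} precisely because it falls outside the standing hypothesis $\epsilon\geq 1$ under which Lemmas \ref{0ofp}--\ref{isantipodal} are proved.
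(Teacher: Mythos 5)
Your proof is correct and follows essentially the same route as the paper's: the dichotomy of Lemma \ref{0ofp}, Theorem \ref{isbaer} in the unequal branch, Result \ref{bag} at weight $2p^2-2p+2$, Theorem \ref{isantipodal} applied to both $\c$ and $-\c$ at weight $2p^2-2p+4$, and parity bookkeeping for the rest. Your explicit separation of the boundary case $\epsilon=0$ (which indeed falls outside the standing hypothesis $1\leq\epsilon\leq p-2$ under which Lemma \ref{0ofp} is proved) is, if anything, slightly more careful than the paper's own phrasing.
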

\begin{proof}
	It follows from Lemma \ref{0ofp} that $\c$ has either $|K_1|=|K_{p-1}|$, or $||K_1|-|K_{p-1}||=p$ and $\c$ has weight $2p^2-p$. In the latter case the result follows from Theorem \ref{isbaer}. In the former case $w(\c)$ is even. It follows from Result \ref{bag} and the discussion thereafter that $\c=\v_1-\v_2$ with $\v_1,\v_2$ the incidence vectors of two disjoint projective planes of order $p-1$ embedded in $\Pi$ if $w(\c)=2p^2-2p+2$. If $w(\c)=2p^2-2p+4$, Theorem \ref{isantipodal} shows that $K_1$ defines an antipodal plane of order $p-1$ embedded in $\Pi$. Replacing $\c$ by $-\c$ we find that also $K_{p-1}$ is an antipodal plane of order $p-1$ embedded in $\Pi$. Hence, $\c=\v_1-\v_2$ with $\v_1,\v_2$ the incidence vectors of two disjoint antipodal planes of order $p-1$ embedded in $\Pi$. 
\end{proof}

\subsection{Secants to the support}

In this subsection we prove some results on the secants of $\S$, the support of $\c$. Recall that  $w(\c)=2p^2-2p+2+\epsilon$, with $1\leq \epsilon\leq p-2$.

\begin{definition}
	For $A\in \S$ let $x_A$, $y_A$ and $z_A$, denote the number of 2-secants, 3-secants and 4-secants to $\S$ through $A$, respectively.
\end{definition}

\begin{lemma}\label{lem:nrsecants}
	For any $A\in \S$ we have
	\begin{align}
		2x_A+y_A&\geq p^2+2p+2-\epsilon \label{eq1}\\
		3x_A+2y_A+z_A&\geq 2p^2+2p+3-\epsilon.\label{eq2}
	\end{align}
	Denoting the set of all lines through $A$ containing at least $4$ points of $\S$ by $L_i$, $i=1,\ldots, m$, we find that
	\begin{align}
		x_A=2p+1-\epsilon+\sum_{i=1}^m(|L_i\cap \S|-3).\label{eq3}
	\end{align}
\end{lemma}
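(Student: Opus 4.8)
The plan is to sort the $p^{2}+1$ lines through $A$ by the number of points of $\S$ they carry, and to extract all three statements from two elementary double counts. For $k\geq 2$ let $a_k$ be the number of lines through $A$ meeting $\S$ in exactly $k$ points. As observed in the proof of Lemma~\ref{2secants}, there are no tangents to $\S$, so $a_1=0$ and every line through $A$ is accounted for. Counting the lines through $A$, and then counting the points of $\S\setminus\{A\}$ line by line, gives the two relations
\begin{align*}
\sum_{k\geq 2}a_k=p^{2}+1,\qquad \sum_{k\geq 2}(k-1)\,a_k=|\S|-1=2p^{2}-2p+1+\epsilon.
\end{align*}
By definition $x_A=a_2$, $y_A=a_3$, $z_A=a_4$, while the lines $L_1,\dots,L_m$ are exactly those contributing to $a_k$ with $k\geq 4$, and $|L_i\cap\S|-3=k-3$ for such a line.

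Each claim then follows from one linear combination of the two relations, chosen so that the low-order coefficients land on the quantities we want. I would first take twice the line count minus the point count, obtaining $\sum_{k\geq 2}(3-k)a_k=2p+1-\epsilon$; here $a_2$ has coefficient $1$, $a_3$ has coefficient $0$, and $a_k$ has coefficient $-(k-3)$ for $k\geq 4$, so rearranging isolates $x_A=a_2=2p+1-\epsilon+\sum_{k\geq 4}(k-3)a_k$, which is exactly Equation~\eqref{eq3}. Next, three times the line count minus the point count gives $\sum_{k\geq 2}(4-k)a_k=p^{2}+2p+2-\epsilon$, in which $a_2,a_3,a_4$ appear with coefficients $2,1,0$ and every $a_k$ with $k\geq 5$ has a negative coefficient; discarding the nonpositive tail yields $2x_A+y_A\geq p^{2}+2p+2-\epsilon$, which is Equation~\eqref{eq1}. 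Finally, four times the line count minus the point count gives $\sum_{k\geq 2}(5-k)a_k=2p^{2}+2p+3-\epsilon$, where $a_2,a_3,a_4,a_5$ carry coefficients $3,2,1,0$ and the remaining terms are again nonpositive, so dropping them yields $3x_A+2y_A+z_A\geq 2p^{2}+2p+3-\epsilon$, which is Equation~\eqref{eq2}.

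There is essentially no substantive obstacle here beyond bookkeeping. The only points requiring care are to verify that in each combination the surviving tail coefficients are genuinely nonpositive, so that discarding those terms weakens the identity to an inequality in the correct direction, and to note that Equation~\eqref{eq3} comes out as an exact equality rather than an inequality precisely because the combination $2\cdot(\text{line count})-(\text{point count})$ isolates $a_2$ together with exactly the $L_i$-contributions, with $a_3$ dropping out. The no-tangent fact from Lemma~\ref{2secants} is what guarantees the sums range only over $k\geq 2$, which is what makes these three combinations behave as stated.
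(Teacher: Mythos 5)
Your proof is correct and is essentially the same argument as the paper's: both rest on the no-tangent fact from Lemma \ref{2secants} and the double count of the points of $\S$ along the $p^2+1$ lines through $A$, with your linear combinations of the two counting relations being just a systematic repackaging of the paper's direct identity $x_A+2y_A+\sum_{i=1}^m(|L_i\cap\S|-1)+1=|\S|$ and its term-by-term bounds. All three of your combinations check out numerically, so nothing further is needed.
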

\begin{proof}
Counting the number of points of $\S$ on lines through $A$ we find:
\begin{align*}
x_A+2y_A+\sum_{i=1}^m(|L_i\cap \S|-1)+1&= |\S|.
\end{align*}
Since every line $L_i$ contains at least $4$ points of $\S$, we find
$$x_A+2y_A+3(p^2+1-x_A-y_A)+1\leq |\S|=2p^2-2p+2+\epsilon$$ which leads to Equation \eqref{eq1}. Furthermore, since there are $z_A$ lines $L_i$ with exactly $4$ points of $\S$, the other lines each contain at least $5$ points of $\S$. We find that
$$x_A+2y_A+3z_A+4(p^2+1-x_A-y_A-z_A)+1\leq |\S|=2p^2-2p+2+\epsilon$$ which leads to Equation \eqref{eq2}.
Finally, using $y_A=p^2+1-x_A-m$ we find that 
$$|\S|-1= x_A+2y_A+\sum_{i=1}^m(|L_i\cap \S|-1)=x_A+2(p^2+1-x_A-m)+\sum_{i=1}^m(|L_i\cap \S|-1).$$
Substituting $|\S|=2p^2-2p+2+\epsilon$ leads to Equation \eqref{eq3}. 
\end{proof}

Note that Equation \eqref{eq3} generalises Lemma \ref{2secants}.

\begin{lemma}\label{KvsX}The following hold:
\begin{itemize}
\item[(i)]  For every point $A\in K_\lambda$, $x_A\leq |K_{p-\lambda}|$. 
\item[(ii)] If $|K_{p-\lambda}|=2p+1-\epsilon$, then for all points $P$ in $K_{\lambda}$, $x_P=2p+1-\epsilon$. 
\item[(iii)] If $x_P=2p+1-\epsilon$ for a point $P\in K_\lambda$, then $|K_{p-\lambda}|=2p+1-\epsilon$, $P$ lies only on $2$-secants and $3$-secants to $\S$ and the points of $K_{p-\lambda}$ are precisely those on $2$-secants through $P$ to $\S$. 
\item[(iv)]  If $|K_{p-\lambda}|=2p+2-\epsilon$, then for all points $P$ in $K_{\lambda}$, $x_P= 2p+2-\epsilon$, and $P$ lies on $2p+2-\epsilon$ $2$-secants to $\S$, $p^2-2p-2+\epsilon$ $3$-secants to $\S$ and one $4$-secant to $\S$.
\end{itemize}
\end{lemma}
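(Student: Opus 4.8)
The plan is to prove the four parts in order, using part (i) as the engine and the line-sum congruence \eqref{clmod} as the recurring tool. The key elementary observation, which I would state up front, is that a $2$-secant through $A\in K_\lambda$ meets $\S$ in $A$ together with exactly one further point $B$; since all other positions of $\c$ on this line vanish, \eqref{clmod} forces the two colours to satisfy $\lambda+(\text{colour of }B)\equiv 0\pmod p$, and as both lie in $\{1,\dots,p-1\}$ this gives colour of $B$ equal to $p-\lambda$ exactly. Distinct $2$-secants through $A$ produce distinct companions $B$ (two points span a unique line), so the map sending a $2$-secant to its companion is an injection from the $x_A$ two-secants through $A$ into $K_{p-\lambda}$; this proves (i), namely $x_A\le |K_{p-\lambda}|$.

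For (ii), I would simply combine (i) with the universal lower bound \eqref{lowerbound}: for any $P\in K_\lambda$ we get $2p+1-\epsilon\le x_P\le |K_{p-\lambda}|=2p+1-\epsilon$, forcing $x_P=2p+1-\epsilon$. Part (iii) is the substantive one. Starting from $x_P=2p+1-\epsilon$, I would feed this into \eqref{eq3}: each summand $|L_i\cap\S|-3$ is at least $1$, so their sum, which equals $x_P-(2p+1-\epsilon)=0$, must be empty, i.e.\ $m=0$ and $P$ lies only on $2$- and $3$-secants. The crux is then to upgrade the injection of (i) to a bijection onto $K_{p-\lambda}$. Given any $Q\in K_{p-\lambda}$, the line $PQ$ is a $2$- or $3$-secant (no line through $P$ has four points of $\S$). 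Were it a $3$-secant with third point $R$, then \eqref{clmod} would give $\lambda+(p-\lambda)+(\text{colour of }R)\equiv 0\pmod p$, forcing the colour of $R$ to be $0$, a contradiction. Hence $PQ$ is a $2$-secant, so every point of $K_{p-\lambda}$ is a companion of $P$; together with the injectivity from (i) this yields both $|K_{p-\lambda}|=2p+1-\epsilon$ and the description of $K_{p-\lambda}$ as exactly the companions on $2$-secants through $P$.

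For (iv), I would again bracket $x_P$ using (i) and \eqref{lowerbound}, obtaining $2p+1-\epsilon\le x_P\le |K_{p-\lambda}|=2p+2-\epsilon$ for each $P\in K_\lambda$. The value $x_P=2p+1-\epsilon$ is excluded: by (iii) it would force $|K_{p-\lambda}|=2p+1-\epsilon$, contradicting the hypothesis. Thus $x_P=2p+2-\epsilon$, and substituting into \eqref{eq3} gives $\sum_{i=1}^m(|L_i\cap\S|-3)=1$; since each summand is at least $1$, there is exactly one such line $(m=1)$ and it is a $4$-secant. The stated counts of $3$-secants then drop out of the identity $y_P=p^2+1-x_P-m=p^2-2p-2+\epsilon$.

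I expect the genuine content to be concentrated in the surjectivity step of (iii): everything else is bookkeeping with \eqref{eq3}, \eqref{lowerbound}, and the chain of implications (ii)$\Leftarrow$(i), (iv)$\Leftarrow$(iii). The delicate point is the colour-forcing argument showing that a point of $K_{p-\lambda}$ cannot lie on a $3$-secant through $P$; this is what pins down $|K_{p-\lambda}|$ exactly rather than merely bounding it, and it relies on the combination of the line-sum congruence with the fact that $\lambda$ and $p-\lambda$ already exhaust a full multiple of $p$ on such a line.
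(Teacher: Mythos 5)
Your proof is correct and takes essentially the same approach as the paper's: the injection from $2$-secants through $A$ into $K_{p-\lambda}$ for (i), squeezing $x_P$ between Lemma \ref{2secants} and part (i) for (ii) and (iv), Equation \eqref{eq3} to rule out long secants, and the colour-forcing observation that a $3$-secant through $P\in K_\lambda$ and a point of $K_{p-\lambda}$ would require a third point of colour $0$ for (iii). The only cosmetic difference is that you phrase (iii) as surjectivity of the companion map onto $K_{p-\lambda}$, whereas the paper assumes $|K_{p-\lambda}|\geq 2p+2-\epsilon$ and derives the same contradiction from a single uncovered point.
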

\begin{proof} Every $2$-secant to $\S$ through $A$ in $K_\lambda$ gives rise to a point with colour $p-\lambda$, so there are at least $x_A$ points with colour $p-\lambda$. This proves (i).

From Lemma \ref{2secants}, we have that $x_P\geq 2p+1-\epsilon$. Assuming that $|K_{p-\lambda}|=2p+1-\epsilon$ then implies that $2p+1-\epsilon=|K_{p-\lambda}|\geq x_P\geq 2p+1-\epsilon$, so $x_P=2p+1-\epsilon$, showing (ii).

Vice versa, assume that $x_P=2p+1-\epsilon$ and $|K_{p-\lambda}|\geq 2p+2-\epsilon$. Since $x_P= 2p+1-\epsilon$, it follows from Equation \eqref{eq3} that all lines through $P$ are $2$-secants or $3$-secants to $\S$. Hence, if $|K_{p-\lambda}|\geq 2p+2-\epsilon$ there is at least one point $R$ of $K_{p-\lambda}$ which does not lie on a $2$-secant to $\S$ through $P$. But that means that $PR$ would need to be a $3$-secant to $\S$, a contradiction since $P$ has colour $\lambda$ and $R$ has colour $p-\lambda$. 

Finally, let $|K_{p-\lambda}|=2p+2-\epsilon$. We know that $x_P\geq 2p+1-\epsilon$ and by the previous argument, it is impossible that $x_P=2p+1-\epsilon$ and $|K_{p-\lambda}|\neq 2p+1-\epsilon$. This means that $x_P\geq 2p+2-\epsilon$. Since $2p+2-\epsilon=|K_{p-\lambda}|\geq x_P\geq 2p+2-\epsilon$, we find that $x_P= 2p+2-\epsilon$. Since $x_P=2p+2-\epsilon$,  it follows from Equation \eqref{eq3} that $P$ lies on $p^2-2p-2+\epsilon$ $3$-secants to $\S$ and exactly one $4$-secant to $\S$. 
\end{proof}

\subsection{Code words with all colours, large \texorpdfstring{$p$}{p}}

In order to prove the main theorem we will show that if $\epsilon\in\{1,2\}$, then $|\K|<p-1$. In other words, $\c$ does not use all colours. In this subsection we take care of of the general case. Some small values of $p$ need a separate treatment and are discussed later (see Subsection \ref{sec:smallp}).

\begin{lemma}\label{oppositesnotminimal}
	Let $p>3$. There is no $\lambda \in \K$ such that $|K_\lambda|=|K_{p-\lambda}|=2p+1-\epsilon$. 
\end{lemma}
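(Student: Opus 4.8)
The plan is to assume, for contradiction, that $|K_\lambda|=|K_{p-\lambda}|=2p+1-\epsilon$ for some $\lambda\in\K$, and to extract from this the \emph{exact} size $|K_{-2\lambda}|=2p-\epsilon$ (colours read in $\F_p$), which will then clash with the generic lower bound of Lemma~\ref{2secants}. First I would feed the hypothesis into Lemma~\ref{KvsX}: part (ii) gives $x_P=2p+1-\epsilon$ for every $P\in K_\lambda$, and part (iii) then says that each such $P$ lies only on $2$-secants and $3$-secants to $\S$, with the $2p+1-\epsilon$ $2$-secants through $P$ meeting precisely the points of $K_{p-\lambda}$. The crucial structural input, and the place where the hypothesis $p>3$ enters, is that no line can carry three points of colour $\lambda$: a $3$-secant with colours $\lambda,\lambda,c$ would force $3\lambda\equiv 0\pmod p$ by Equation~\eqref{clmod}, impossible for $p>3$ since $1\le\lambda\le p-1$. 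Hence $K_\lambda$ is an arc, so through $P$ each $3$-secant meets $K_\lambda$ in at most one further point.

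Next I would count the points of colour $-2\lambda$ collinear with a fixed $P\in K_\lambda$ in two ways. On the one hand, each of the $2p-\epsilon$ points $P'\in K_\lambda\setminus\{P\}$ determines a unique line $PP'$, which must be a $3$-secant (it is not one of the $2$-secants through $P$, since those reach $K_{p-\lambda}$), and its third point has colour $-2\lambda$ by Equation~\eqref{clmod}; distinct $P'$ give distinct lines, hence distinct third points, yielding $2p-\epsilon$ distinct points of $K_{-2\lambda}$. On the other hand, for any point $T\in K_{-2\lambda}$ the line $PT$ is again a $3$-secant (a $2$-secant through $P$ would land in $K_{p-\lambda}$, while $T\notin K_{p-\lambda}$), and its third point $W$ satisfies $\lambda+(-2\lambda)+c_W\equiv 0$, forcing $c_W=\lambda$, i.e.\ $W\in K_\lambda$. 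Thus every point of $K_{-2\lambda}$ occurs as one of the $2p-\epsilon$ third points produced above, and I conclude $|K_{-2\lambda}|=2p-\epsilon$. Here one must verify $-2\lambda\notin\{0,\lambda,-\lambda\}$ in $\F_p$, which again uses $p>3$, so that $K_{-2\lambda}$ is a genuine colour class disjoint from $K_\lambda$ and $K_{p-\lambda}$.

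Finally, running the identical argument with the roles of $\lambda$ and $p-\lambda$ interchanged (both classes have size $2p+1-\epsilon$) gives $|K_{2\lambda}|=2p-\epsilon$ as well. Since $2p-\epsilon\ge p+2\ge 1$, the class $K_{-2\lambda}$ is non-empty; choosing $T\in K_{-2\lambda}$ and applying Lemma~\ref{2secants} to $T$ shows $x_T\ge 2p+1-\epsilon$, and every $2$-secant through $T$ reaches a point of colour $2\lambda$, whence $|K_{2\lambda}|\ge 2p+1-\epsilon$. This contradicts $|K_{2\lambda}|=2p-\epsilon$, finishing the proof. I expect the main obstacle to be the two-way count pinning $|K_{-2\lambda}|$ to the exact value $2p-\epsilon$: the inclusion giving $|K_{-2\lambda}|\le 2p-\epsilon$ is the delicate direction, as it relies simultaneously on the arc property and on the $2$-secants through $P$ being \emph{exactly} the lines to $K_{p-\lambda}$, so the bookkeeping must be handled carefully to avoid double-counting and to keep track of the colour constraint $-2\lambda\neq\lambda$.
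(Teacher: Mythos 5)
Your proof is correct and is essentially the paper's own argument: both fix a point of $K_\lambda$, use Lemma \ref{KvsX}(ii),(iii) together with $3\lambda\not\equiv 0\pmod p$ to show that the $2p-\epsilon$ lines joining that point to the rest of $K_\lambda$ are $3$-secants carrying $2p-\epsilon$ distinct points of $K_{p-2\lambda}$, and both hinge on the observation that every point of $K_{p-2\lambda}$ must arise as such a third point (the paper turns this into a contradiction via an extra point $R$ supplied by $|K_{p-2\lambda}|\geq x_A$, whereas you state it as the exact count $|K_{p-2\lambda}|=2p-\epsilon$ and close with the symmetric count at colour $2\lambda$ plus Lemmas \ref{2secants} and \ref{KvsX}(i) --- equivalent bookkeeping, which incidentally bypasses the paper's minimal point $Q$). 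One cosmetic slip: in justifying the arc property you write that a $3$-secant with colours $\lambda,\lambda,c$ forces $3\lambda\equiv 0\pmod p$; what you mean is that three collinear points of $K_\lambda$ would lie on a $3$-secant with colours $\lambda,\lambda,\lambda$, and it is this configuration that yields $3\lambda\equiv 0\pmod p$.
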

\begin{proof}
	Let $Q$ be a point of $\S$ such that $x_Q$, the number of $2$-secants through $Q$ to $\S$, is minimal, i.e. for all $P\in \S$, $x_P\geq x_Q$. Let $\mu\in \K$. Since $p-\mu\in \K$ too, we can consider a point $P$ of $K_{p-\mu}$.  By Lemma \ref{KvsX}(i), $|K_{\mu}|\geq x_P$ and $x_P\geq x_Q$ by our assumption, so
	\begin{align}
		|K_\mu|\geq x_Q \text{\ for all\ }\mu\in \K. \label{everycolourxq1}
	\end{align}	
	It follows from Lemma \ref{2secants} and Equation \eqref{everycolourxq1} that $|K_{\lambda}|\geq 2p+1-\epsilon$ for all $\lambda\in \K$.	
	\par Assume that for some colour $\lambda$, we have that $|K_\lambda|=|K_{p-\lambda}|=2p+1-\epsilon$. Without loss of generality, we can take $\lambda\leq (p-1)/2$. Consider a point $A$ in $K_\lambda$. By Lemma \ref{KvsX}(ii), we see that $x_A=2p+1-\epsilon=|K_{p-\lambda}|$
	and by Lemma \ref{KvsX}(iii), $A$ only lies on $2$-secants and $3$-secants to $\S$. Since it is impossible that $3\lambda\equiv 0\pmod{p}$, every point of $K_\lambda$, different from $A$, lies on a different line through $A$ (which is necessarily a $3$-secant to $\S$). 
	\par Let $\mathcal{L}$ be the set of those $3$-secants through $A$ that contain a point of $K_\lambda$, different from $A$. There are precisely $|K_\lambda|-1=x_A-1$ lines in $\mathcal{L}$. On each of the lines of $\L$, the third point is contained in $K_{p-2\lambda}$, so we find exactly $x_A-1$ points of $K_{p-2\lambda}$ contained in lines of $\L$. 
	\par By Equation \eqref{everycolourxq1}, $|K_{p-2\lambda}|\geq x_Q$. Since $x_Q\geq 2p+1-\epsilon$ from Lemma \ref{2secants}, and $x_A=2p+1-\epsilon\geq x_{Q}$, we have that $|K_{p-2\lambda}|\geq x_A$. Hence, there is at least one point, say $R$, of $K_{p-2\lambda}$ which is not on a line of $\mathcal{L}$. 
	But we have seen that all lines through $A$ are either $2$-secants or $3$-secants, and all points on $2$-secants through $A$ belong to $\K_{p-\lambda}$. So we find that the line $PR$ is a $3$-secant, which forces the third point on $AR$ different from $A$ and $R$ to be contained in $K_\lambda$, a contradiction since all points of $K_\lambda$ are lying on the lines of $\mathcal{L}$. We conclude that it is impossible that $|K_\lambda|=|K_{p-\lambda}|=2p+1-\epsilon$.
\end{proof}

\begin{lemma}\label{lem:notallcolours1}
	If $p\geq 11$ and $\epsilon=1$, then $|\K|<p-1$, that is, $\c$ does not use all colours.
\end{lemma}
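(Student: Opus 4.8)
The plan is to argue by contradiction. Assume $\c$ uses every colour, i.e.\ $|\K|=p-1$, and aim to bound $p$ from above. The first step is to record a uniform lower bound on the colour class sizes. For every $\lambda\in\K$ the opposite colour $p-\lambda$ also lies in $\K$ (the colours pair up as in Corollary~\ref{cor:even}), so $K_{p-\lambda}\neq\emptyset$; picking $P\in K_{p-\lambda}$ and combining Lemma~\ref{2secants} (which gives $x_P\geq 2p+1-\epsilon=2p$ since $\epsilon=1$) with Lemma~\ref{KvsX}(i) (which gives $x_P\leq|K_\lambda|$) yields $|K_\lambda|\geq 2p$ for all $\lambda\in\K$.

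The second step is a counting argument on the total ``excess'' above this bound. Since $|\S|=2p^2-2p+3$ and there are exactly $p-1$ colours, summing over the colour classes gives
\[
\sum_{\lambda\in\K}\bigl(|K_\lambda|-2p\bigr)=\bigl(2p^2-2p+3\bigr)-2p(p-1)=3.
\]
Thus only a total excess of $3$ is available, so at most three colour classes can have size strictly larger than $2p$.

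The third and decisive step is to play this scarcity of excess off against Lemma~\ref{oppositesnotminimal}. The $p-1$ colours split into $(p-1)/2$ opposite pairs $\{\mu,p-\mu\}$, and by Lemma~\ref{oppositesnotminimal} (applicable since $p>3$) it is impossible that $|K_\mu|=|K_{p-\mu}|=2p$; as each class already has size at least $2p$, at least one class in each pair has size at least $2p+1$. Each such class contributes at least $1$ to the excess, so the total excess is at least $(p-1)/2$. Comparing with the exact value $3$ forces $(p-1)/2\leq 3$, i.e.\ $p\leq 7$, contradicting $p\geq 11$; hence $|\K|<p-1$.

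The argument is short and essentially bookkeeping once the pieces are in place; the only genuine content is the observation that the rigid pairing supplied by Lemma~\ref{oppositesnotminimal} forces at least $(p-1)/2$ units of excess, which the tight weight budget ($\epsilon=1$) cannot afford. The main thing to be careful about is the exact arithmetic with $\epsilon=1$, so that the lower bound is precisely $2p$ and the available excess is precisely $3$, and checking that $p\geq 11$ is exactly the threshold needed to defeat $(p-1)/2\leq 3$ (the borderline case $p=7$ giving equality is presumably why it is excluded here and handled separately).
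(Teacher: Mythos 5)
Your proof is correct and follows essentially the same route as the paper: both establish $|K_\lambda|\geq 2p$ via Lemma~\ref{2secants} and Lemma~\ref{KvsX}(i), then use Lemma~\ref{oppositesnotminimal} to force at least one class of size $\geq 2p+1$ in each of the $(p-1)/2$ opposite pairs, and compare against $|\S|=2p^2-2p+3$ to get $(p-1)/2\leq 3$. Your ``excess'' bookkeeping is just a rephrasing of the paper's summed inequality $\frac{p-1}{2}\cdot 2p+\frac{p-1}{2}\cdot(2p+1)\leq|\S|$, and your arithmetic (total excess exactly $3$, threshold $p\leq 7$) matches.
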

\begin{proof}
		 Assume that $|\K|=p-1$. Since $|K_\mu|\geq 2p$ for all $\mu\in \K$ by Lemma \ref{2secants}, and at most half of the $p-1$ colours occur exactly $2p$ times by Lemma \ref{oppositesnotminimal}, it follows that
	\begin{align*}
		\frac{p-1}{2}\cdot 2p+\frac{p-1}{2}\cdot(2p+1)\leq |\S|=2p^2-2p+3\quad \Leftrightarrow \quad 3\geq \frac{p-1}{2}.
	\end{align*}
	For $\epsilon=1$ and $p>7$, this is a contradiction.
\end{proof}

For $\epsilon=1$ and $p=7$, this result will be proven in Lemma \ref{lem:eps1p7}.

\begin{lemma}\label{lem:atmostone2p-1}
	If $\epsilon=2$, then there is at most one colour $\mu$ such that $|K_{\mu}|=2p-1$.
\end{lemma}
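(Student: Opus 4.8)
The plan is to argue by contradiction. Suppose there were two distinct colours $\mu_1,\mu_2\in\K$ with $|K_{\mu_1}|=|K_{\mu_2}|=2p-1=2p+1-\epsilon$; call such a colour \emph{minimal}. First I would dispose of the case $\mu_2=p-\mu_1$: this would give $|K_{\mu_1}|=|K_{p-\mu_1}|=2p-1$, which is forbidden by Lemma~\ref{oppositesnotminimal}. So from now on $\mu_2\neq p-\mu_1$, while $\mu_1\neq\mu_2$ gives $\mu_1-\mu_2\not\equiv 0\pmod p$.

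Next I would extract the local structure around the opposite class $K_{p-\mu_1}$. Since $|K_{\mu_1}|=2p+1-\epsilon$, Lemma~\ref{KvsX}(ii) gives $x_A=2p+1-\epsilon$ for every $A\in K_{p-\mu_1}$, and Lemma~\ref{KvsX}(iii) then shows that $A$ lies only on $2$-secants and $3$-secants to $\S$, with the points on $2$-secants through $A$ being \emph{exactly} $K_{\mu_1}$. Consequently no point of $K_{\mu_1}$ lies on a $3$-secant through $A$, and by \eqref{clmod} (using that $A$ has colour $p-\mu_1$) the two points other than $A$ on any $3$-secant through $A$ carry colours $c$ and $\mu_1-c$.

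The heart of the argument is then a pairing on the $3$-secants through $A$: send each of the two non-$A$ points of such a secant to the other, i.e.\ a point of colour $c$ to a point of colour $\mu_1-c$. Every point of $K_{\mu_2}$ and every point of $K_{\mu_1-\mu_2}$ lies on some $3$-secant through $A$ (neither colour equals $\mu_1$, so neither point sits on a $2$-secant). Provided $\mu_1\neq 2\mu_2$ — so that $\mu_2\neq\mu_1-\mu_2$ and no $3$-secant through $A$ carries two points of either of these two colours — this pairing restricts to a bijection $K_{\mu_2}\leftrightarrow K_{\mu_1-\mu_2}$, whence $|K_{\mu_1-\mu_2}|=|K_{\mu_2}|=2p-1$; that is, $\mu_1-\mu_2$ is again minimal. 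Running the symmetric argument from a point $B\in K_{p-\mu_2}$ (where $3$-secants pair colour $c$ with $\mu_2-c$) shows, provided $\mu_2\neq 2\mu_1$, that $\mu_2-\mu_1=p-(\mu_1-\mu_2)$ is minimal as well.

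Finally I would combine these to contradict Lemma~\ref{oppositesnotminimal}. If both $\mu_1\neq 2\mu_2$ and $\mu_2\neq 2\mu_1$, then $\mu_1-\mu_2$ and its opposite $p-(\mu_1-\mu_2)$ are both minimal, which is exactly what Lemma~\ref{oppositesnotminimal} excludes. The two exceptional cases cannot hold simultaneously, as $\mu_1=2\mu_2$ and $\mu_2=2\mu_1$ together force $3\mu_1\equiv 0\pmod p$; and each is handled directly, since the relevant bijection still applies: if $\mu_2=2\mu_1$ the first bijection gives $\mu_1-\mu_2=p-\mu_1$ minimal, so $\mu_1$ and $p-\mu_1$ are both minimal, while if $\mu_1=2\mu_2$ the second makes $\mu_2$ and $p-\mu_2$ both minimal — in either case Lemma~\ref{oppositesnotminimal} is again violated. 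I expect the main obstacle to be precisely the bookkeeping in these degenerate cases, where the colour-pairing on $3$-secants can develop a fixed point ($c=\mu_1-c$) or place two equally coloured points on a single secant; the proof must verify that such secants are ruled out by $\mu_1\neq 2\mu_2$ (respectively $\mu_2\neq 2\mu_1$) and that the pairing genuinely exhausts all of $K_{\mu_2}$ and all of $K_{\mu_1-\mu_2}$, rather than only the points seen from one vertex.
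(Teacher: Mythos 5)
Your main case is sound and runs parallel in spirit to the paper's proof (which normalises one minimal colour to $1$, writes the other as $p-\lambda$, and derives size equalities from pairings on $3$-secants through points of $K_{p-1}$ and $K_\lambda$), but your handling of the two degenerate cases is genuinely wrong, and the error is exactly of the type you flagged at the end without resolving. Take the case $\mu_2=2\mu_1$, so that $\mu_1-\mu_2\equiv p-\mu_1\pmod p$: the target class of your first pairing is then $K_{p-\mu_1}$, the colour class of the centre $A$ itself. The point $A$ lies on no $3$-secant through $A$, so it is an unpaired element of the target class, and the pairing is a bijection $K_{\mu_2}\leftrightarrow K_{p-\mu_1}\setminus\{A\}$ (every $B'\in K_{p-\mu_1}\setminus\{A\}$ does lie on a $3$-secant $AB'$, whose third point has colour $2\mu_1=\mu_2$). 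The correct conclusion is therefore $|K_{p-\mu_1}|=|K_{\mu_2}|+1=2p$, \emph{not} $2p-1$, so $p-\mu_1$ is not minimal, no violation of Lemma~\ref{oppositesnotminimal} occurs, and your claimed contradiction evaporates. The symmetric failure happens in the case $\mu_1=2\mu_2$, where $\mu_2-\mu_1\equiv p-\mu_2$ is the colour of the centre $B$ of your second pairing, and one only gets $|K_{p-\mu_2}|=2p$. Your conditions $\mu_1\neq 2\mu_2$ and $\mu_2\neq 2\mu_1$ rule out the two-equal-colours degeneracy on a single secant, but not this centre-in-target-class degeneracy -- and the two degeneracies are triggered by the very same congruences, just in the opposite pairing.

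The gap is closable, by a parity argument rather than by the bijection: if $\mu_2=2\mu_1$, look instead at the \emph{second} pairing from $B\in K_{p-\mu_2}$. There $\mu_2-\mu_1\equiv\mu_1$, so the points of $K_{\mu_1}$ pair up among themselves on $3$-secants through $B$ (no fixed points, and $B\notin K_{\mu_1}$ since $\mu_1\neq p-\mu_2$ was excluded at the outset), forcing $|K_{\mu_1}|$ to be even -- contradicting $|K_{\mu_1}|=2p-1$. The case $\mu_1=2\mu_2$ is symmetric, using the first pairing on $K_{\mu_2}$. This is essentially how the paper disposes of its exceptional case $\lambda=p-2$ (after normalisation your case $\mu_2=2\mu_1$ becomes the paper's minimal colours $\{1,2\}$, and a line through $R\in K_{p-2}$ and a point of $K_1$ carries two points of $K_1$, making $|K_1|$ even). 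So rewrite the final paragraph: in the degenerate cases the bijection does \emph{not} ``still apply''; what applies is a fixed-point-free involution on one of the two minimal classes, and the contradiction is parity of $2p-1$, not Lemma~\ref{oppositesnotminimal}.
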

\begin{proof}
	It follows from Lemma \ref{2secants} and Lemma \ref{KvsX}(i) that each colour occurs at least $2p-1$ times. Assume that there are two colours that occur precisely $2p-1$ times. By scaling $\c$ we may assume without loss of generality that one of these colours is 1; let $p-\lambda$ be the other, $\lambda\neq p-1$. By Lemma \ref{oppositesnotminimal} we know that $\lambda\neq1$. From Lemma \ref{KvsX}(ii) it follows that $x_{A}=2p-1$ for any point $A\in K_{\lambda}\cup K_{p-1}$. Furthermore, from Lemma \ref{KvsX}(iii) it follows that any point $A\in K_{\lambda}\cup K_{p-1}$ lies on $2p-1$ $2$-secants and $p^2-2p+2$ $3$-secants (and no other lines).
	\par Let $Q$ be a point in $K_{p-1}$. A $3$-secant through $Q$ and a point of $K_{\mu}$, $\mu\neq1$, has the third point contained in $K_{p-\mu+1}$. We conclude that
	\begin{align}
		|K_\mu|=|K_{p-\mu+1}| \text{ for\ } \mu\neq 1,p-1,2. \label{gelijk2}
	\end{align}
	Let $R$ be a point of $K_{\lambda}$. We distinguish between two cases. 
	\begin{itemize}
		\item First we look at the case $\lambda\neq p-2$. Since all lines through $R$ are $2$-secants or $3$-secants, every line through $R$ containing a point of $K_1$ contains precisely one point of $K_{p-\lambda-1}$ and vice versa. We conclude that $2p-1=|K_{1}|=|K_{p-\lambda-1}|$. We also know that $|K_{p-\lambda}|=|K_{p-(p-\lambda)+1}|=|K_{\lambda+1}|$ using Equation \eqref{gelijk2}. Furthermore, we have assumed that $|K_{p-\lambda}|=2p-1$. So, we find $|K_{\lambda+1}|=2p-1=|K_{p-\lambda-1}|$, contradicting Lemma \ref{oppositesnotminimal}.
		\item In case $\lambda=p-2$ any line through $R$ containing a point of $K_{1}$ is a 3-secant and contains exactly two points of $K_{1}$. Hence $|K_{1}|$ must be even, a contradiction since we have assumed that $|K_{1}|=2p-1$. 
	\end{itemize}
	In both cases we found a contradiction, concluding the proof.
\end{proof}

\begin{lemma}\label{lem:oppositpairs2p}
	Suppose that $\epsilon=2$, $|\mathcal{K}|=p-1$ and $|K_\mu|\geq 2p$ for every colour $\mu$. Let $\mathcal{M}$ be the multiset of colours where the multiplicity of the colour is $k$ if $|K_\mu|=2p+k$ (so only colours occuring more than $2p$ times appear in $\mathcal{M}$). Then $|\mathcal{M}|=4$.	  
	\par If moreover, there is a pair $\{\lambda,p-\lambda\}$ of colours such that  $|K_\lambda|=|K_{p-\lambda}|=2p$, then
	\begin{enumerate}[label=(\roman*)]
		\item through any point of $K_{\lambda}$ there is a unique 4-secant containing a point of $K_{p-2\lambda}$ and two points with either differing colours, both contained in $\mathcal{M}$, or two points of the same colour if that colour has multiplicity at least two in $\mathcal{M}$,
		\item the multiset $\mathcal{M}$ can be partitioned in two sets of size 2, such that the elements in one set sum to $\lambda\pmod p$  and the elements in the other set sum to $p-\lambda \pmod p$, and
		\item $|K_{2\lambda}|\geq2p+1$, i.e.~$2\lambda\in\mathcal{M}$.
	\end{enumerate}
\end{lemma}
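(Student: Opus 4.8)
The first assertion is immediate from counting symbols. Summing $|K_\mu|$ over the $p-1$ colours gives $|\S|=2p^2-2p+4$, while the hypothesis $|K_\mu|\geq 2p$ contributes a baseline $(p-1)\cdot 2p=2p^2-2p$. Hence $\sum_{\mu}(|K_\mu|-2p)=4$, and since every summand is non-negative this sum is exactly the number of elements of $\mathcal{M}$; so $|\mathcal{M}|=4$.

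For the ``moreover'' part, fix the pair $\{\lambda,p-\lambda\}$ with $|K_\lambda|=|K_{p-\lambda}|=2p$. Since $2p=2p+2-\epsilon$, Lemma \ref{KvsX}(iv) applies to every point of $K_\lambda$ (and symmetrically of $K_{p-\lambda}$): such a point $A$ lies on exactly $2p$ $2$-secants, $p^2-2p$ $3$-secants and one $4$-secant. The $2p$ $2$-secants through $A$ meet $K_{p-\lambda}$ in all of its $2p$ points, so no point of colour $p-\lambda$ lies on a longer secant through $A$. Because $3\lambda\not\equiv 0\pmod p$, each of the remaining $2p-1$ points of colour $\lambda$ lies either on a $3$-secant through $A$ carrying exactly one further point of colour $p-2\lambda$, or on the $4$-secant. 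Writing $s$ for the number of colour-$\lambda$ points other than $A$ on the $4$-secant and comparing the induced count of colour-$(p-2\lambda)$ points with $|K_{p-2\lambda}|\geq 2p$ forces $s\leq 1$; and $s=1$ would make the $4$-secant carry the colours $\lambda,\lambda,p-2\lambda,p-2\lambda$, whose sum is $\equiv -2\lambda\not\equiv 0$, contradicting \eqref{clmod}. Hence $s=0$, so the unique $4$-secant through $A$ contains $A$, at least one point of $K_{p-2\lambda}$, and two further points whose colours sum to $\lambda$; this is the first half of (i).

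To finish (i) and obtain (ii) I would exploit the pairing of the $3$-secants through $A$. Each such secant carries two non-$A$ points whose colours $c,c'$ satisfy $c+c'\equiv-\lambda$, so the colours split into partner pairs under the involution $c\mapsto -\lambda-c$; counting colour-$c$ points off the $4$-secant from both ends of a partner pair yields $|K_c|-|K_{c'}|=t_c-t_{c'}$, where $t_c$ denotes the number of colour-$c$ points on the $4$-secant. Consequently any colour appearing on the $4$-secant whose partner does not (the center-colour pairing $\{\lambda,p-2\lambda\}$ being treated separately via $e_{p-2\lambda}=t_{p-2\lambda}-1$) has strictly positive excess, i.e.\ lies in $\mathcal{M}$, with the stated multiplicity in the coincident case; applied to the two further points of (i) this completes (i). Running the same analysis at a point of $K_{p-\lambda}$ produces a pair of colours in $\mathcal{M}$ summing to $p-\lambda$. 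Since the two points of (i) give a pair summing to $\lambda$ and $|\mathcal{M}|=4$, these four colours exhaust $\mathcal{M}$ and furnish the partition of (ii).

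The delicate point is (iii), equivalently that the number $b$ of colour-$(2\lambda)$ points on the $4$-secant through a point of $K_{p-\lambda}$ equals $2$ rather than $1$ (note $|K_{2\lambda}|=2p-1+b$). The key tool is the \emph{uniqueness} of the $4$-secant: if a colour-$(2\lambda)$ point $R$ had a unique $4$-secant, then every $4$-secant of a $K_{p-\lambda}$-point through $R$ would be that unique secant, which meets $K_{p-\lambda}$ in a single point; double counting incidences between the $2p$ $4$-secants of $K_{p-\lambda}$ and the colour-$(2\lambda)$ points would give $2pb\leq|K_{2\lambda}|=2p-1+b$, forcing $b=1$. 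By Lemma \ref{KvsX}(iv), $R$ has a unique $4$-secant precisely when $|K_{p-2\lambda}|=2p$, i.e.\ exactly in the degenerate case $b=1$ where $\{2\lambda,p-2\lambda\}$ is itself an opposite pair each occurring $2p$ times. Thus the incidence bound, together with the symmetric argument at $K_\lambda$, shows $b=b_\lambda\in\{1,2\}$ jointly, and (iii) amounts to excluding the value $1$. I expect this exclusion to be the genuine obstacle: in that case one reapplies the entire scheme to the new pair $\{2\lambda,p-2\lambda\}$, forcing further colours into $\mathcal{M}$ and iterating along the multiplicative orbit of $2$ (the colours $2^{i}\lambda$); a four-element multiset admits only three pairwise partition-sums up to sign, so requiring a pair summing to each $\pm 2^{i}\lambda$ overloads the budget $|\mathcal{M}|=4$ once the orbit of $2$ is long enough. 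The few small (and otherwise special) primes for which this orbit is short, and for which the resulting arithmetic coincidences survive, must be excluded separately, which is consistent with deferring those cases to Subsection \ref{sec:smallp}.
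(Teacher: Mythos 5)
Your derivation of $|\mathcal{M}|=4$ and of part (i) is correct and essentially the paper's argument: the bound $s\le 1$ via the count of $K_{p-2\lambda}$-points, and the exclusion of $s=1$ via the line-sum condition \eqref{clmod}, match the paper's reasoning at the point $R\in K_\lambda$, as does the pairing of colours on the $3$-secants through $R$ (the paper phrases it as: a $3$-secant through $R$ with a point of $K_a$ carries a point of $K_{p-\lambda-a}$, whence $|K_a|\ge 2p+1$, with the self-paired case $a=p-\lambda-a$ handled by parity). However, your route to (ii) has a gap: you claim that the pair obtained at a point of $K_\lambda$ and the pair obtained at a point of $K_{p-\lambda}$ ``exhaust $\mathcal{M}$'', but nothing in your argument rules out overlap --- the same colour, of multiplicity one in $\mathcal{M}$, could appear in both pairs, in which case the four listed colours occupy only three slots of the multiset and no partition of $\mathcal{M}$ follows. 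The paper sidesteps this entirely: from \eqref{cmod} one gets $0\equiv\sum_{\mu}\mu|K_{\mu}|\equiv\sum_{\mu}\mu\left(|K_{\mu}|-2p\right)\equiv\sum^{4}_{i=1}\mu_{i}\pmod{p}$, so once one sub-pair of $\mathcal{M}$ sums to $\lambda$, the complementary pair automatically sums to $p-\lambda$. This is a one-line patch, but as written your step is unjustified.

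The serious gap is (iii). You correctly reduce it to excluding $b=1$, i.e.\ the degenerate configuration $|K_{2\lambda}|=2p$, but your double count only shows that this configuration is internally consistent (and your ``precisely when'' reading of Lemma \ref{KvsX}(iv) asserts a converse that the lemma does not state); it yields no contradiction. Your proposed exclusion --- iterating the whole scheme along the orbit $2^{i}\lambda$ and overloading the budget $|\mathcal{M}|=4$ --- is not carried out, and by your own admission would leave unhandled those primes for which the multiplicative order of $2$ is small; but the lemma is invoked unconditionally for all $p\ge 11$ in Lemma \ref{lem:prenotallcolours}(iii), and the paper's subsection on small $p$ treats a different statement, not this one. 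The missing idea is a short parity argument. Suppose $|K_{2\lambda}|=2p$; then Lemma \ref{KvsX}(iv), applied to the point $R'\in K_{p-2\lambda}$ lying on the $4$-secant $m$ through $R$, shows $m$ is the \emph{unique} $4$-secant through $R'$. Every $2$-secant through $R'$ meets only $K_{2\lambda}$; every $3$-secant through $R'$ meets $K_{\lambda}$ in $0$ or $2$ points, since the third colour on such a secant is $-(p-2\lambda)-\lambda\equiv\lambda$; and $m$ contains exactly one point of $K_{\lambda}$, namely $R$. Hence $|K_{\lambda}|$ would be odd, contradicting $|K_{\lambda}|=2p$. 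This single count is the entire content of (iii) in the paper, and without it (or a completed substitute) your proof of (iii) is incomplete.
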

\begin{proof}
	We find immediately that	\begin{align*}
		|\mathcal{M}|=\sum^{p-1}_{\mu=1}(|K_{\mu}|-2p)= |\S|-2p(p-1)=2p^2-2p+4-2p(p-1)=4.
	\end{align*}
	Hence, there are four (not necessarily distinct) colours $\mu_1,\dots,\mu_4$ such that $|K_{\mu_i}|\geq 2p+1$. $\mathcal{M}$ is the multiset $\{\mu_1,\mu_2,\mu_3,\mu_4\}$ containing each colour $\mu_i$ exactly  $|K_{\mu_i}|-2p$ times.
	\par Now, let $\{\lambda,p-\lambda\}$ be a pair of colours such that $|K_\lambda|=|K_{p-\lambda}|=2p$, and let $R$ be a point of $K_{\lambda}$. It follows from Lemma \ref{KvsX}(iv) that $R$ lies on $2p$ 2-secants, $p^2-2p$ 3-secants and a 4-secant $m$. The third point on a $3$-secant through $R$ and a point of $K_\lambda\setminus\{R\}$ belongs to $K_{p-2\lambda}$. Note that $p-2\lambda\neq\lambda$. Since $|K_{p-2\lambda}|\geq 2p$ and there are $2p-1$ points in $K_\lambda\setminus\{R\}$, there is at least one point, say $R'$, of $K_{p-2\lambda}$ on $m$. It follows that $m$ does not contain a point of $K_{\lambda}\setminus\{R\}$, and that there are precisely $2p-1$ 3-secants through $R$ with a point of $K_\lambda\setminus\{R\}$. Let $a$ and $b$ be the colours of the points on $m$ different from $R$ and $R'$. We already mentioned that $a\neq\lambda\neq b$ and it is also clear that $a+b\equiv\lambda\pmod{p}$.
	\par Any 3-secant through $R$ containing a point of $K_{a}$ also contains a point of $K_{p-\lambda-a}$. Note that $b\neq p-\lambda-a$. So, if $a\neq p-\lambda-a$, then $|K_{a}|\geq 2p+1$ since $|K_{p-\lambda-a}|\geq 2p$. If $a=p-\lambda-a$, then $|K_{a}|\geq 2p$ is odd, since the points of $K_a$, different from $R$ occur in pairs on the lines through $R$, different so clearly $|K_{a}|\geq 2p+1$. We can conclude that $a$ (and similarly $b$) is a colour from $\mathcal{M}$, and hence that there are colours in $\mathcal{M}$ whose sum is $\lambda$. Moreover, we have
	\begin{align*}
		0\equiv\sum^{p-1}_{\mu=1}\mu|K_{\mu}|\equiv\sum^{p-1}_{\mu=1}(|K_{\mu}|-2p)\equiv\sum^{4}_{i=1}\mu_{i}\pmod{p}
	\end{align*}
	since $\c$ is a code word. And so, the sum of the two other colours in $\mathcal{M}$ is $p-\lambda \pmod p$.
	\par So, we find a partition of  $\mathcal{M}$ in two sets of two elements such that the elements in one set sum to $\lambda\pmod p$ and those in the other set sum to $p-\lambda\pmod p$.
	\par Finally, suppose that $|K_{2\lambda}|=2p$. Then there is a unique 4-secant through $R'$ by Lemma \ref{KvsX}(iv), namely $m$. A $3$-secant through $R'$ contains either zero or two points of $K_{\lambda}$, and there is exactly one point of $K_{\lambda}$ on $m$, namely $R$. Hence $|K_{\lambda}|$ needs to be odd, a contradiction since $|K_{\lambda}|=2p$. It follows that $2\lambda\in \mathcal{M}$.
\end{proof}

\begin{lemma}\label{lem:prenotallcolours}
	Let $X=\min\{x_{A}\mid A\in\S\}$. Assume $\epsilon=2$.
	\begin{enumerate}[label=(\roman*)]
		\item If $|\K|=p-1$ and $p\geq7$, then $X\in\{2p-1,2p\}$.
		\item If $X=2p-1$ and $p\geq13$, then $|\K|<p-1$, that is, $\c$ does not use all colours.
		\item If $X=2p$ and $p\geq11$, then $|\K|<p-1$, that is, $\c$ does not use all colours.
	\end{enumerate}
\end{lemma}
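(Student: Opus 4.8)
The plan is to prove the three parts in order from a common toolkit. The guiding idea is that minimality of $X$ forces a uniform lower bound on every colour class, which together with $|\S|=2p^2-2p+4$ leaves only a tiny ``excess'' above $2p$ to distribute; a doubling argument of the type in Lemma \ref{lem:oppositpairs2p} then shows that each pair of opposite colours both occurring exactly $2p$ times forces its double into this small excess set, and a pair-counting argument finishes each case. For part (i), let $Q$ realise $X$. Since $|\K|=p-1$, every colour $\mu$ has its opposite $p-\mu$ present, so choosing $P\in K_{p-\mu}$ and applying Lemma \ref{KvsX}(i) gives $|K_\mu|\ge x_P\ge x_Q=X$. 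Summing over all $p-1$ colours yields $(p-1)X\le|\S|=2p(p-1)+4$, hence $X\le 2p+\frac{4}{p-1}<2p+1$ for $p\ge 7$; combined with $X\ge 2p-1$ from Lemma \ref{2secants} this forces $X\in\{2p-1,2p\}$.

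For part (iii), first observe that $X=2p$ forbids any colour from occurring exactly $2p-1$ times, since by Lemma \ref{KvsX}(ii) such a colour would produce a point with $x_P=2p-1<X$. Hence every colour occurs at least $2p$ times and Lemma \ref{lem:oppositpairs2p} applies, giving total excess $|\mathcal{M}|=4$; write $D$ for the set of colours occurring strictly more than $2p$ times, so $|D|\le 4$. Call an opposite-pair $\{\lambda,p-\lambda\}$ \emph{good} if both classes have size exactly $2p$. Applying Lemma \ref{lem:oppositpairs2p}(iii) to $\lambda$ and to $p-\lambda$ shows a good pair forces the complete pair $\{2\lambda,p-2\lambda\}\subseteq D$, and distinct good pairs give disjoint such pairs because doubling is a bijection on opposite-pairs. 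Writing $g$ for the number of good pairs, $t$ for the number of opposite-pairs lying entirely in $D$, and $u$ for those meeting $D$ in a single colour, we get $t\ge g$, $|D|=2t+u$, and (number of bad pairs) $=t+u=|D|-t$. Thus $\frac{p-1}{2}=g+(|D|-t)\le|D|\le 4$, forcing $p\le 9$, a contradiction for $p\ge 11$.

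For part (ii), the point $Q$ realising $X=2p-1$ lies in some $K_\nu$; Lemma \ref{KvsX}(iii) gives $|K_{\mu_0}|=2p-1$ with $\mu_0:=p-\nu$, and Lemma \ref{lem:atmostone2p-1} shows $\mu_0$ is the \emph{unique} colour of size $2p-1$, all others having size $\ge 2p$. The total excess above $2p$ over the colours $\ne\mu_0$ is then $5$, so the excess set $D$ satisfies $|D|\le 5$. Examining the $3$-secants through $Q$ (whose two points other than $Q$ have colours summing to $\mu_0$) pairs the classes via $\mu\mapsto\mu_0-\mu$; the orbit $\{\nu,2\mu_0\}$ gives $|K_\nu|=|K_{2\mu_0}|+1$, and since $\mu_0$ is the unique class of size $2p-1$ we get $|K_{2\mu_0}|\ge 2p$, hence $\nu\in D$. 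I again call $\{\lambda,p-\lambda\}$ good when both classes have size $2p$ and run the doubling argument; the only new feature is the defect at $\mu_0$: when $p-2\lambda=\mu_0$ (resp. $2\lambda=\mu_0$) the forcing of $2\lambda$ (resp. $p-2\lambda$) into $D$ breaks, but then $2\lambda=\nu$ (resp. $p-2\lambda=\nu$) already lies in $D$, so a good pair still forces $\{2\lambda,p-2\lambda\}\subseteq D$. The decisive counting refinement is that the opposite-pair $\{\mu_0,\nu\}$ consists of two ``special'' colours (those in $\{\mu_0\}\cup D$) and is distinct from every good-pair double, since a double lies in $D$ while $\mu_0\notin D$. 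Hence the number $t$ of opposite-pairs with two special colours satisfies $t\ge g+1$, and with $|\{\mu_0\}\cup D|=1+|D|$ the bookkeeping of part (iii) gives $\frac{p-1}{2}\le|D|\le 5$, so $p\le 11$, a contradiction for $p\ge 13$.

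I expect the main obstacle to be the doubling step under the $\mu_0$-defect in part (ii): one must verify carefully that whenever the Lemma \ref{lem:oppositpairs2p}-style forcing of a double into the excess set fails (exactly when the relevant third colour is the deficient $\mu_0$), the corresponding double automatically equals $\nu$ and is therefore already in $D$, and that the bookkeeping pair $\{\mu_0,\nu\}$ is genuinely extra. Getting these two facts precisely right is what improves the counting from $\frac{p-1}{2}\le 1+|D|$ to $\frac{p-1}{2}\le|D|$ and thereby lowers the threshold to $p\ge 13$; the degenerate coincidences among $\mu_0,\nu,2\mu_0,\mu_0/2$ all need to be checked to be vacuous for odd $p\ge 5$.
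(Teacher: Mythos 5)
Your parts (i) and (iii) are correct. Part (i) is essentially the paper's own argument: $|K_\mu|\geq X$ for every colour via Lemma \ref{KvsX}(i), summed over $p-1$ colours against $|\S|=2p(p-1)+4$. Part (iii) is a genuinely different route from the paper's: you use only Lemma \ref{lem:oppositpairs2p}(iii) (legitimately, since its hypotheses do hold once $X=2p$ rules out classes of size $2p-1$) together with injectivity of doubling on opposite pairs, and the bookkeeping $\frac{p-1}{2}=g+(|D|-t)\leq|D|\leq4$ is valid; the paper instead pins down the multiset $\mathcal{M}$ explicitly as $\{2\lambda,3\lambda,p-3\lambda,p-2\lambda\}$ via the partition property \ref{lem:oppositpairs2p}(ii) and derives an arithmetic contradiction from a further good pair. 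Your version is cleaner and avoids that explicit computation.

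Part (ii), however, has a genuine gap at exactly the defect step you flagged. Since $D$ consists by definition of the colours occurring \emph{more} than $2p$ times and $|K_{\mu_0}|=2p-1$, we have $\mu_0\notin D$; so in the defect case $p-2\lambda=\mu_0$ your claim that ``a good pair still forces $\{2\lambda,p-2\lambda\}\subseteq D$'' is false as stated (the member $p-2\lambda=\mu_0$ is not in $D$), and your distinctness assertion that $\{\mu_0,\nu\}$ differs from every good-pair double ``since a double lies in $D$'' rests on that false claim. If a good pair could double onto $\{\mu_0,\nu\}$, you would only get $t\geq g$, and the count then yields $\frac{p-1}{2}\leq 1+|D|\leq 6$, which does not exclude $p=13$ --- the boundary case the lemma must cover. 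There is a second, related issue: you invoke Lemma \ref{lem:oppositpairs2p}-style forcing even though that lemma's hypothesis $|K_\mu|\geq2p$ for \emph{all} colours fails at $\mu_0$, so its internal steps must be re-verified; they do survive whenever $2\lambda,p-2\lambda\neq\mu_0$, but this needs saying. The correct repair is stronger than what you claim: the defect case cannot occur at all. If $\{\lambda,p-\lambda\}$ were good with $2\lambda\in\{\mu_0,\nu\}$, then running the forcing for the appropriate member $R$ of the pair (the third colour on $3$-secants through $R$ joining two points of $R$'s class is then $\nu$, and $|K_\nu|\geq2p+1>2p-1$) puts a point of $K_\nu$ on the unique $4$-secant through $R$ given by Lemma \ref{KvsX}(iv); but $|K_{p-\nu}|=|K_{\mu_0}|=2p-1=2p+1-\epsilon$, so by Lemma \ref{KvsX}(ii),(iii) every point of $K_\nu$ lies only on $2$- and $3$-secants, a contradiction. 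With that, every good double lies entirely in $D$ and is distinct from $\{\mu_0,\nu\}$, whence $t\geq g+1$ and $\frac{p-1}{2}\leq|D|\leq5$ as you wanted. For comparison, the paper's own (ii) sidesteps all of this: from the excess bound $\sum_{\mu=2}^{p-2}(|K_\mu|-2p)\leq4$ it extracts two \emph{consecutive} colours $\nu,\nu+1$ of size exactly $2p$ (this is where $p\geq13$ enters), and then the $\geq 2p+1$ points of $K_{p-1}$, each joined to a fixed point of $K_{p-\nu}$ by a distinct $3$-secant carrying a point of $K_{\nu+1}$, force $|K_{\nu+1}|\geq2p+1$, a contradiction.
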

\begin{proof}
	It follows from Lemma \ref{2secants} and Lemma \ref{KvsX}(i) that $X\geq 2p-1$. By Lemma \ref{KvsX}(i), $|K_{\mu}|\geq X$ for all $\mu\in \K$. If $|\K|=p-1$ and $X\geq 2p+1$, then
	\begin{align*}
		(p-1)(2p+1)\leq|\K|\cdot X\leq\sum^{p-1}_{\mu=1}|K_{\mu}|\leq |\S|=2p^2-2p+4,
	\end{align*} 
	a contradiction since $p\geq7$. This proves the first part of the statement.
	\par We now discuss the cases $X=2p-1$ and $X=2p$ separately. In both cases we assume that $|\K|=p-1$ and we derive a contradiction.
	\par Suppose first that $X=2p-1$, and let $Q$ be a point of $\S$ such that $x_Q=X$, i.e. for all $P\in \S$, $x_P\geq x_Q$. Without loss of generality, we may assume that $Q$ has colour $p-1$. Then $|K_1|=2p-1$ by Lemma \ref{KvsX}(iii). By Lemma \ref{KvsX}(ii) all lines through a point of $K_{p-1}$ are $2$-secants or $3$-secants to $\S$. From Lemma \ref{lem:atmostone2p-1} we know that $|K_{\mu}|\geq2p$ for all $\mu\neq1$. Every point of $K_2$ gives rise to a point of $K_{p-1}$ on a $3$-secant through $Q$, so $|K_{p-1}|=|K_2|+1$. Since $|K_2|\geq 2p$, $|K_{p-1}|\geq 2p+1$. We find that
	\begin{align*}
		|K_{1}|+|K_{p-1}|+\sum^{p-2}_{\mu=2}(|K_{\mu}|-2p)= |\S|-2p(p-3)=4p+4\quad\Rightarrow\quad \sum^{p-2}_{\mu=2}(|K_{\mu}|-2p)\leq 4.
	\end{align*}
	Hence, there is a colour $\nu$ with $2\leq\nu\leq p-3$ such that $|K_{\nu}|=|K_{\nu+1}|=2p$, since $p\geq 13$. Let $T$ be a point in $K_{p-\nu}$. Recall that the points of $K_{p-1}$ only lie on 2-secants and 3-secants to $\S$. So, each of the $2p+1$ points of $K_{p-1}$ occurs on a $3$-secant through $T$, and all these 3-secants are different since $p-\nu\neq 2$. However, each of them contains a point of $K_{\mu+1}$, contradicting that $|K_\nu|=2p$. This proves the second part of the statement.	
	\par Suppose now that $X=2p$. Recall that $|K_{\mu}|\geq X$ for all $\mu\in \K$,
	so we can apply Lemma \ref{lem:oppositpairs2p}. Let $\mathcal{M}$ be the multiset introduced in the statement of this lemma. Since $|\mathcal{M}|=4$ and $p\geq11$, we can find a pair $\{\lambda,p-\lambda\}$ of opposite colours such that  $|K_\lambda|=|K_{p-\lambda}|=2p$. By Lemma \ref{lem:oppositpairs2p}(iii), we find that $2\lambda\in\mathcal{M}$ and (switching the roles of $\lambda$ and $p-\lambda$) also $p-2\lambda\in\mathcal{M}$. Now applying Lemma \ref{lem:oppositpairs2p}(ii) we find that $3\lambda,p-3\lambda\in\mathcal{M}$. So, $\mathcal{M}=\{\lambda,3\lambda,p-3\lambda,p-\lambda\}$. Since $p\geq 11$, we have that $5\lambda\notin\mathcal{M}$ and $p-5\lambda\notin\mathcal{M}$, i.e.~$|K_{5\lambda}|=|K_{p-5\lambda}|=2p$. But since no two elements of $\mathcal{M}$ sum to $5\lambda\pmod{p}$, we find a contradiction by Lemma \ref{lem:oppositpairs2p}(ii). This contradiction finishes the proof. 
\end{proof}

\begin{lemma}\label{lem:notallcolours2}
	If $p\geq 13$ and $\epsilon=2$, then $|\K|<p-1$, that is, $\c$ does not use all colours.
\end{lemma}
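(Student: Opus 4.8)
The plan is to combine the three cases established in Lemma \ref{lem:prenotallcolours}. That lemma, under the hypothesis $\epsilon=2$, shows (i) that if $|\K|=p-1$ and $p\geq 7$ then the minimum number $X=\min\{x_A\mid A\in\S\}$ of $2$-secants through a point equals either $2p-1$ or $2p$, and then (ii) and (iii) each conclude $|\K|<p-1$ under the respective hypotheses $X=2p-1$ (with $p\geq 13$) and $X=2p$ (with $p\geq 11$). So the strategy is simply to argue by contradiction: assume $|\K|=p-1$. Since $p\geq 13$ forces $p\geq 7$, part (i) applies and gives $X\in\{2p-1,2p\}$. In either case the corresponding part of Lemma \ref{lem:prenotallcolours} applies (both require at most $p\geq 13$), yielding $|\K|<p-1$, contradicting our assumption.

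Concretely, I would first note that if $|\K|<p-1$ there is nothing to prove, so we may assume $|\K|=p-1$ and seek a contradiction. Then I would invoke Lemma \ref{lem:prenotallcolours}(i) to split into the two possibilities $X=2p-1$ and $X=2p$. In the case $X=2p-1$, since we are assuming $p\geq 13$, Lemma \ref{lem:prenotallcolours}(ii) directly gives $|\K|<p-1$, a contradiction. In the case $X=2p$, since $p\geq 13\geq 11$, Lemma \ref{lem:prenotallcolours}(iii) gives $|\K|<p-1$, again a contradiction. As these exhaust all cases, the assumption $|\K|=p-1$ is untenable, so $\c$ does not use all colours.

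I do not expect any genuine obstacle here: the real work has already been done in Lemma \ref{lem:prenotallcolours}, and this statement is essentially a clean packaging of its three parts into a single conclusion for $\epsilon=2$. The only thing worth checking is that the numerical thresholds line up correctly, namely that the hypothesis $p\geq 13$ is strong enough to invoke every part we need: part (i) needs $p\geq 7$, part (ii) needs $p\geq 13$, and part (iii) needs $p\geq 11$, all of which are implied by $p\geq 13$. Thus $p\geq 13$ is exactly the threshold at which both sub-cases can be dispatched simultaneously, which explains the hypothesis in the statement.
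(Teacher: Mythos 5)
Your proposal is correct and coincides with the paper's own proof, which likewise assumes $|\K|=p-1$, invokes Lemma \ref{lem:prenotallcolours}(i) to split into $X=2p-1$ and $X=2p$, and derives the contradiction from parts (ii) and (iii) respectively. Your extra check that $p\geq 13$ subsumes the thresholds $p\geq 7$ and $p\geq 11$ is exactly the point that makes the packaging work.
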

\begin{proof}
	Assume that $|\K|=p-1$, then it follows from Lemma \ref{lem:prenotallcolours}(i) that $\min\{x_{A}\mid A\in\S\}$ equals $2p-1$ or $2p$. For the former case we find a contradiction through Lemma \ref{lem:prenotallcolours}(ii), for the latter through Lemma \ref{lem:prenotallcolours}(iii).
\end{proof}

For $\epsilon=2$ and $p=7,11$, this result will be proven in Lemmas \ref{lem:eps2p7} and \ref{lem:eps2p11}.

\subsection{Code words with all colours, small \texorpdfstring{$p$}{p}}\label{sec:smallp}

In Lemma \ref{lem:notallcolours1}, we considered the case that all colours were used, for $\epsilon=1$. We found a contradiction if $p>7$. We now show this result for $p=7$.

\begin{lemma}\label{lem:eps1p7}
	If $p=7$ and $\epsilon=1$, then $|\K|<p-1=6$, that is, $\c$ does not use all colours.
\end{lemma}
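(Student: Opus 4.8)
The plan is to assume $|\K| = p-1 = 6$ and derive a contradiction from a divisibility clash in a count of $4$-secants. Throughout, recall that $w(\c) = 2p^2-2p+3 = 87$ and that $2p+1-\epsilon = 14$, so by Lemma \ref{2secants} every point of $\S$ lies on at least $14$ two-secants, and by Lemma \ref{KvsX}(i) each colour occurs at least $14$ times. The colours split into the three opposite pairs $\{1,6\},\{2,5\},\{3,4\}$, and Lemma \ref{oppositesnotminimal} (valid since $p>3$) forbids both colours of a pair from occurring exactly $14$ times. Since $\sum_\mu |K_\mu| = 87$ and each pair contributes at least $14+15 = 29$, the total $3\cdot 29 = 87$ is attained only when every pair consists of one colour occurring $14$ times and one colour occurring $15$ times. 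Establishing this rigid split is the first key step.

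Call a colour a \emph{$15$-colour} or a \emph{$14$-colour} according to $|K_\mu|$; there are three of each, and the opposite of a $15$-colour is a $14$-colour and vice versa. Next I would read off the local line structure. If $\lambda$ is a $15$-colour then its opposite $K_{p-\lambda}$ has size $14$, so Lemma \ref{KvsX}(ii),(iii) give that every point $P\in K_\lambda$ has $x_P = 14$, lies only on $2$- and $3$-secants, and that its $14$ two-secants meet exactly the $14$ points of $K_{p-\lambda}$, one each. In particular a $15$-colour point lies on no $4$-secant, and every point of $K_{p-\lambda}$ is joined to $P$ by a $2$-secant.

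Turning to a $14$-colour $\lambda$, whose opposite $\mu = p-\lambda$ is a $15$-colour of size $15$: by the previous paragraph every one of the $15$ points of $K_\mu$ is joined by a $2$-secant to every point of $K_\lambda$, so each $Q\in K_\lambda$ lies on at least $15$ two-secants. As $x_Q \le |K_{p-\lambda}| = |K_\mu| = 15$ by Lemma \ref{KvsX}(i), we get $x_Q = 15$. A standard count of the $p^2+1 = 50$ lines through $Q$ now pins down the line types: the sum of $|\ell\cap\S|$ over these lines is $86+50 = 136$, there are no tangents and exactly $15$ two-secants, so writing $t_i$ for the number of $i$-secants through $Q$ one finds $\sum_{i\ge 3}t_i = 35$ and $\sum_{i\ge 3}(i-3)t_i = 136 - 2\cdot 15 - 3\cdot 35 = 1$, forcing exactly one $4$-secant and thirty-four $3$-secants through $Q$. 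Hence every $14$-colour point lies on precisely one $4$-secant.

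The contradiction then comes from double counting incident pairs $(\text{point},\,4\text{-secant})$. Since $15$-colour points lie on no $4$-secant, every point of a $4$-secant is a $14$-colour point, and each $14$-colour point lies on exactly one $4$-secant. Counting from the point side gives $3\cdot 14 = 42$ such incidences; counting from the line side gives $4N$, where $N$ is the number of $4$-secants. Thus $4N = 42$, which is impossible since $4\nmid 42$, and this contradiction shows $|\K| < 6$. I expect the main obstacle to be establishing the rigid $\{14,15\}$ partition of the opposite pairs and then upgrading Lemma \ref{KvsX} to the precise statement that each $14$-colour point carries exactly one $4$-secant; once these local counts are secured, the global parity clash closes the argument and, pleasantly, never requires knowing which specific colours are the $14$- and $15$-colours.
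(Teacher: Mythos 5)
Your proof is correct and takes essentially the same route as the paper's: the rigid $\{14,15\}$ split across the three opposite colour pairs (via Lemmas \ref{2secants}, \ref{KvsX}(i) and \ref{oppositesnotminimal} together with $|\S|=87=3\cdot29$), the classification of line types through each point, and the final contradiction $4N=42$ from double counting point--$4$-secant incidences. The only differences are cosmetic: the paper additionally pins down the exact colour distribution (WLOG $|K_1|=14$, whence $|K_2|=14$, $|K_5|=15$, $|K_4|=14$, $|K_3|=15$), which, as you correctly observe, the final count never needs, and where you re-derive by direct counting that each $14$-colour point lies on exactly $15$ $2$-secants, $34$ $3$-secants and one $4$-secant, the paper simply cites Lemma \ref{KvsX}(iv).
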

\begin{proof}
	In this case $|\S|=2p^2-2p+3=87$. Assume that $|\K|=p-1=6$. It follows from Lemma \ref{2secants} that each point lies on at least 14 2-secants and thus from Lemma \ref{KvsX}(i) that each colour occurs at least 14 times. Then from Lemma \ref{oppositesnotminimal}, it follows that the $6$ colours occur in pairs $\{\lambda,7-\lambda\}$, of which one colour occurs $14$ times and the other $15$ times. Since we could replace $\c$ by $-\c$, we may assume without loss of generality that $|K_1|=14$ and $|K_6|=15$. Fix a point $Q\in K_6$; by Lemma \ref{KvsX}(ii) and (iii) there are only 2-secants and 3-secants to $\S$ through $Q$. Since a line through $Q$ contains a point of $K_6\setminus\{Q\}$ if and only if it contains a point of $K_2$, we find $|K_2|=14$ and thus also $|K_5|=15$. Since a line through a point of $K_4$ and $Q$ contains two points of $K_4$, $|K_4|$ is even, and so $|K_4|=14$, and hence, $|K_3|=15$.
	\par Using Lemma \ref{KvsX}(ii) and (iii), we find that $\S$ is a set of $87$ points of which $3\cdot 15=45$ points (those of $K_3,K_5,K_6$) each lie on $14$ $2$-secants and $36$ $3$-secants. The remaining $3\cdot 14=42$ points  belong to some $K_{\mu}$ such that $|K_{7-\mu}|=15$, so it follows from Lemma \ref{KvsX}(iv) that they each lie on $15$ $2$-secants to $\S$, $34$ $3$-secants to $\S$ and precisely one $4$-secant to $\S$. Counting incident couples $(P,L)$ where $P\in \S$ and $L$ is a $4$-secant to $\S$ then yields $45\cdot 0+42\cdot 1=4Y$, where $Y$ is the number of 4-secants to $\S$, a contradiction since $Y$ needs to be an integer.
\end{proof}

In Lemma \ref{lem:notallcolours2}, we considered the case that all colours were used, for $\epsilon=2$. We found a contradiction if $p\geq13$. We now show this result for $p\in\{7,11\}$. We will often look at the \emph{colour distribution} of the code word $\c$, which is the tuple $\left(|K_{i}|\right)^{p-1}_{i=1}$.

\begin{lemma}\label{lem:eps2p7}
	If $p=7$ and $\epsilon=2$, then $|\K|<p-1=6$, that is, $\c$ does not use all colours.
\end{lemma}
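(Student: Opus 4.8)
The plan is to assume $|\K|=p-1=6$ and derive a contradiction, working throughout with $|\S|=2p^2-2p+4=88$. First I would collect the numerical constraints on the colour distribution $(|K_i|)_{i=1}^{6}$. By Lemma \ref{2secants} and Lemma \ref{KvsX}(i) every colour occurs at least $2p-1=13$ times, by Lemma \ref{lem:atmostone2p-1} at most one colour occurs exactly $13$ times, and by Lemma \ref{oppositesnotminimal} no opposite pair can both occur $13$ times; moreover, the code-word condition (Equation \eqref{clmod} summed over the plane) forces $\sum_{\mu=1}^{6}\mu\,|K_\mu|\equiv 0\pmod 7$. Since $\sum_\mu|K_\mu|=88$ and each count is at least $13$ with at most one equal to $13$, the counts cluster tightly around $14$, so only finitely many distributions survive. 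I would then split according to $X=\min\{x_A\mid A\in\S\}$, which by Lemma \ref{lem:prenotallcolours}(i) equals $13$ or $14$.

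In the branch $X=13$ I would fix a point $Q$ with $x_Q=13$ and, after scaling so that $Q\in K_6$, use Lemma \ref{KvsX}(ii)--(iii) to get $|K_1|=13$ and that $Q$ lies only on $2$-secants and $3$-secants. Reading off the colour forced on the third point of each $3$-secant through $Q$ (the two non-$Q$ colours sum to $1\pmod 7$) yields $|K_6|=|K_2|+1$, $|K_3|=|K_5|$, and $|K_4|$ even, exactly as in the proof of Lemma \ref{lem:eps1p7}; together with the total $88$ this leaves a short explicit list of distributions. In the branch $X=14$ every colour occurs at least $14$ times, so Lemma \ref{lem:oppositpairs2p} applies: the excess multiset $\mathcal{M}$ has size $4$ and $\sum_{\mathcal{M}}\mu\equiv 0\pmod 7$. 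Whenever some opposite pair $\{\lambda,p-\lambda\}$ sits at exactly $14$, I would apply parts (ii) and (iii) of that lemma to eliminate candidates (for instance $\mathcal{M}=\{3,4,5,6\}$ fails the congruence, and $\mathcal{M}=\{2,2,5,5\}$ forces the pair $\{1,6\}$ to admit a partition into subsets summing to $1$ and $6\pmod 7$, which is impossible).

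Finally, for each surviving distribution I would run an incidence count between points of $\S$ and $4$-secants: by Lemma \ref{KvsX}(iv) every point whose opposite colour occurs exactly $14$ times lies on precisely one $4$-secant, while by Lemma \ref{KvsX}(iii) a point whose opposite colour occurs $13$ times lies on none. Comparing the resulting count of (point, $4$-secant) pairs with $4$ times the number of $4$-secants, as in the last step of Lemma \ref{lem:eps1p7}, should yield a non-integer and hence a contradiction; where this is not immediately decisive I would supplement it with the global identities obtained by summing Equation \eqref{eq3} and by counting point-pairs on lines. I expect the genuine obstacle to be twofold. First, a colour class of size $15$ or more is not controlled by Lemma \ref{KvsX}(iv), so the clean mod-$4$ count must be replaced by a finer analysis of the $4$-secants guaranteed by Lemma \ref{lem:oppositpairs2p}(i) (tracking their admissible colour patterns). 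Second, some admissible distributions have \emph{no} opposite pair at the minimum value $14$ -- for example $|K_1|=|K_2|=|K_3|=14$, $|K_4|=|K_5|=15$, $|K_6|=16$, which passes both the congruence and the size-$4$ constraint on $\mathcal{M}$ -- so parts (ii) and (iii) of Lemma \ref{lem:oppositpairs2p} give nothing, and these distributions must be killed by the direct incidence counts above rather than by the structural lemma.
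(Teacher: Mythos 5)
Your setup matches the paper's: assume $|\K|=6$, pin every colour class to size at least $13$ via Lemmas \ref{2secants} and \ref{KvsX}(i), use Lemma \ref{lem:atmostone2p-1} to allow at most one class of size $13$, and split on the minimum ($13$ versus $14$, which via Lemma \ref{KvsX}(ii)--(iii) is equivalent to the paper's split on $Z=\min_\mu|K_\mu|$). Your spot-checks in the $X=14$ branch are also correct (e.g.\ $3+4+5+6\equiv 4\pmod 7$ kills $\mathcal{M}=\{3,4,5,6\}$, and no pair from $\{2,2,5,5\}$ sums to $1$ or $6$). But the proof essentially \emph{is} the case-by-case elimination of the surviving distributions, and that is precisely what you leave open. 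The paper's argument is not a uniform incidence count: it is a long bespoke analysis (split on $|K_2|\in\{14,15,16,\geq 17\}$, then on $|K_3|$) in which the colour pattern on the unique $4$-secant through a point of a size-$14$ class is determined, parity constraints are extracted (e.g.\ $|K_6|$ even in the $Z=13$ branch, a fact your derivation misses, though that only lengthens your list), and contradictions come from forcing a point of a ``no-$4$-secant'' class onto a $4$-secant, or an odd class to be even.

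The concrete failure point is your proposed uniform tool. The mod-$4$ count of (point, $4$-secant) incidences, which closes Lemma \ref{lem:eps1p7}, is provably not decisive here: in the hardest surviving distribution $(14,15,15,15,15,14)$ the paper shows, after an intricate multi-step argument ruling out $5$-secants and double $4$-secants for points of $K_2,K_3,K_4,K_5$, that \emph{every} point of $\S$ lies on exactly one $4$-secant --- and then $88=4\cdot 22$ yields no contradiction at all; the paper must instead count $3$-secants ($88\cdot 35=3f$ with $3\nmid 88\cdot 35$). So the scenario you flag as an ``obstacle'' (classes of size $\geq 15$ uncontrolled by Lemma \ref{KvsX}(iv), and distributions such as $(14,14,14,15,15,16)$ with no opposite pair at the minimum) is not a side issue to be ``supplemented'' by summing Equation \eqref{eq3}: it is the core of the proof, the paper disposes of each such distribution by a distinct ad hoc geometric argument (for instance, $(14,14,14,15,15,16)$ falls to $|K_1|\geq|K_5|$ obtained by pairing points of $K_5$ with points of $K_1$ along $3$-secants), and no argument of this kind appears in your proposal. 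As written, the proposal is a correct plan of attack with the decisive middle missing.
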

\begin{proof}
	In this case $|\S|=2p^2-2p+4=88$. Assume that $|\K|=p-1=6$. Let $Z=\min\{|K_{\mu}|\mid \mu\in\K\}$. Since $15\cdot6>88$, we have that $Z\leq 14$. From Lemmas \ref{KvsX}(i) and \ref{lem:prenotallcolours} it follows that $Z=13$ or $Z=14$. We distinguish between those cases.
	\par Firstly, we look at the case $Z=13$. Without loss of generality we can assume that $|K_1|=13$. From Lemma \ref{lem:atmostone2p-1} it follows that $|K_\mu|\geq 14$ for $\mu\neq 1$. Any line through a point of $K_6$ is a 2-secant or 3-secant by Lemma \ref{KvsX}(ii) and (iii). Considering the lines through a point of $K_{6}$, we find that $|K_6|=|K_2|+1$, $|K_3|=|K_5|$ and $|K_4|$ is even. It also follows that if we consider a point $R$ of $K_2$, then every line through $R$ and a point of $K_6$ is necessarily a $3$-secant containing precisely two points of $K_6$. Hence, $|K_6|$ must be even. 
	\par Since all colours different from $1$ occur at least $14$ times, and $|\S|=2p^2-2p+4=88$ we find that	
	\begin{enumerate}
		\item $(|K_1|,|K_2|,|K_3|,|K_4|,|K_5|,|K_6|)=(13,15,15,14,15,16)$ or
		\item $(|K_1|,|K_2|,|K_3|,|K_4|,|K_5|,|K_6|)=(13,15,14,16,14,16)$.
	\end{enumerate}
	\par In the first case, consider a point $T$ of $K_3$; then $T$ lies on 14 2-secants to $\S$, and precisely one 4-secant, say $m$, by Lemma \ref{KvsX}(iv). It follows that there are at most 13 3-secants containing a point of $K_1$, and a point of $K_3$, different from $T$. Since $|K_3|=15$, we see that there is at least one point $T'$ of $K_3$ on $m$, different from $T$. Since $|K_2|=15$, and a $3$-secant through $T$ and a point of $K_2$ contains two points of $K_2$, we find that $m$ contains a point $T''$ of $K_2$. But this implies that the point of $\S$ on $m$ different from $T,T',T''$ needs to belong to $K_6$, a contradiction since a point of $K_6$ does not lie on $4$-secants.
	\par Similarly, in the second case, consider a point $T$ of $K_4$; then $T$ lies on 14 2-secants to $\S$, and precisely one $4$-secant, say $m$, by Lemma \ref{KvsX}(iv). Since $|K_4|=16=|K_6|$, it follows that $m$ contains at least one point of $K_6$, a contradiction since a point of $K_6$ does not lie on $4$-secants.
	\par Secondly, we look at the case $Z=14$. Without loss of generality we can assume that $|K_1|=14$. Note that by Lemma \ref{KvsX}(iv), every point of $K_6$ lies on 14 2-secants, 35 3-secants and a unique 4-secant. We distinguish several cases, based on $|K_{2}|$.
	\begin{itemize}
		\item {\bf Assume first that $|K_2|=14$.} Looking the lines through a point $R\in K_5$, applying Lemma \ref{KvsX}(iv) and considering that $|K_1|$ is even, results in the fact that the unique $4$-secant through $R$ contains $3$ points with symbols from $3,4,5,6$ adding to $2\pmod{7}$. It follows that these three points are:
		\begin{enumerate} 
			\item three points of $K_3$
			\item two points of $K_5$ and one of $K_6$
			\item one point of $K_4$ and two of $K_6$.
		\end{enumerate}
		Since a $3$-secant through $R$ with a point of $K_3$ contains a point of $K_6$, it follows that in the third case $|K_6|=|K_3|+2$, and hence, $(|K_3|,|K_6|)=(14,16)$ or $(|K_3|,|K_6|)=(15,17)$. Moreover, $|K_4|=|K_5|$ in the third case. Using that $|\S|=88$, the colour distribution is given by
		\begin{enumerate}
			\item $(|K_1|,|K_2|,|K_3|,|K_4|,|K_5|,|K_6|)=(14,14,17,14,15,14)$, or
			\item $(|K_1|,|K_2|,|K_3|,|K_4|,|K_5|,|K_6|)=(14,14,14,14,17,15)$, or
			\item[3a.] $(|K_1|,|K_2|,|K_3|,|K_4|,|K_5|,|K_6|)=(14,14,14,15,15,16)$, or 
			\item[3b.] $(|K_1|,|K_2|,|K_3|,|K_4|,|K_5|,|K_6|)=(14,14,15,14,14,17)$.
		\end{enumerate}
		\par Considering the lines through a point of $K_5$ shows that a line through a point of $K_5$ and $K_1$ necessarily is a $3$-secant containing two points of $K_1$. But then, considering the lines through a point of $K_1$ shows that every point of $K_5$ gives rise to a point in $K_1$, so $|K_1|\geq|K_5|$, a contradiction in cases 1,2,3a. In case 3b the unique $4$-secant, say $m$, through a point of $K_5$ has one point of $K_4$, one of $K_5$ and two points of $K_6$. Let $P$ be a point of $K_6$ on this $4$-secant. Since $|K_1|=14$, it follows that $P$ lies on only $2$-and $3$-secants and $m$. In particular, since a $3$-secant with one point of $K_4$ through $P$ contains a second point of $K_4$, and $m$ contains precisely one point of $K_4$, $|K_4|$ is odd, a contradiction.
		\item {\bf Then assume that $|K_2|=15$.} We distinguish four subcases based on $|K_{3}|$.
		\begin{itemize}
			\item If $|K_3|=14$, then through a point $R$ of $K_4$ there is a unique $4$-secant by Lemma \ref{KvsX}(iv), and looking at the lines through $R$, we see that this 4-secant has three points of $K_4$ and one of $K_2$. So, $(|K_1|,|K_2|,|K_3|,|K_4|,|K_5|,|K_6|)=(14,15,14,17,14,14)$. We can see that the unique $4$-secant through a point of $K_2$ has three points of $K_4$ and in particular, it follows that a line through a point of $K_3$ and a point of $K_2$ is always a $3$-secant. However, looking at the lines through a point of $K_3$ then this implies that the number of points of $K_2$ is even, a contradiction since $|K_2|=15$.
			\item If $|K_3|=15$, then considering the lines through a point $Q\in K_6$ (among which a unique 4-secant $\ell$) shows that there are either 14 or 15 3-secants with a point of $K_3$ and $K_5$ through $Q$; there cannot be 13 since $6+3+3+5\not\equiv0\pmod{7}$.
			\par If there are $14$ such $3$-secants, then $|K_5|=14$. Since $|K_3|=15$, it follows that $\ell$ contains exactly one point of $K_3$. The two remaining points on $\ell$ have colours adding up to $5\pmod{7}$, and hence, it is impossible that $\ell$ contains a point of $K_2$. Since $|K_2|=15$, there are $15$ $3$-secants through $Q$ with a point of $K_2$ and a point of $K_6$. Since $|K_4|\geq 14$ and there is at most one point of $\ell$ in $K_4$, there are $7$ $3$-secants through $Q$ with two points of $K_4$. We see that $Q$ lies on $14+14+15+7=50=p^{2}+1$ lines that are 2-secants or $3$-secants, a contradiction since $Q$ lies also on the 4-secant $\ell$.
			\par If there are 15 3-secants with a point of $K_3$ and $K_5$ through $Q$, then it follows that $|K_{5}|\geq15$, and hence $|K_{4}|\leq 15$. So, there are precisely 7 3-secants through $Q$ with two points of $K_4$, and hence, $13$ $3$-secants with a point of $K_2$ and one of $K_6$. It follows that the three points of $\ell$, different from $Q$ are two points of $K_2$ and one of $K_4$. Hence, the colour distribution is $(14,15,15,15,15,14)$.
			\par A point $S$ of $K_1$ lies on a unique $4$-secant by Lemma \ref{KvsX}(iv). Given the colour distribution and looking at the lines through $S$, we can see that this 4-secant contains two points of $K_5$ and one of $K_3$. Similarly, a point of $K_6$ lies on a unique $4$-secant with two points of $K_2$ and one of $K_4$. Consider a point $R$ of $K_2$, and suppose that $R$ does not lie on a unique $4$-secant. Then $R$ lies on $15$ $2$-secants, and either on 34 3-secants, zero 4-secants and a unique 5-secant, or on 33 3-secants and two $4$-secants. Suppose first that we are in the former case, then by the reasoning above, the unique $5$-secant through $R$ cannot contain points of $K_1$ nor $K_6$, so it contains $2$ points of $K_2$, $2$ of $K_3$ and one of $K_4$. Let $T$ be a point of $K_3$ on this $5$-secant, then $T$ necessarily lies on $15$ $2$-secants, each containing a point of $K_4$. However, there is a point of $K_4$ on the $5$-secant through $T$, leading to $|K_4|\geq 16$, a contradiction.
			\par We conclude that if $R$ does not lie on a unique $4$-secant, it lies on two $4$-secants, say $m$ and $n$, on $15$ $2$-secants, and $33$ $3$-secants. Since a point of $K_1$ and a point of $K_2$ do not occur together on a $4$-secant, and $|K_4|=15,|K_1|=14$, we find that there is exactly one point of $K_4$ contained on $m$ or $n$, say on $m$. We also find that there is at least one point of $K_3$ on $m$ or $n$ since $|K_2|=|K_3|$, and it follows that this point lies on $n$ since all points of $K_5$ lie on a 2-secant through $R$. There cannot be a point of $K_6$ on $n$ since the unique $4$-secant through a point of $K_6$ does not contain a point of $K_3$. It follows that the remaining two points on $n$ can only belong to $K_2$ and $K_3$, a contradiction since these two points cannot sum to $2\pmod{7}$. We conclude that a point of $K_2$ lies on precisely one $4$-secant. The same reasoning (replacing $i$ with $7-i$ throughout) shows that every point of $K_5$ lies on a unique $4$-secant.	
			\par If a point of $K_3$ lies on a $5$-secant, then by the above argument, a point of $K_{2}$ can only be contained in a $3$-secant through this point, containing another point of $K_2$, contradicting that $|K_2|$ is odd.
			\par If a point of $K_3$ lies on two $4$-secants, say $m$ and $n$, it lies on $15$ $2$-secants, and since $|K_2|$ is odd, we find that there is a point of $K_2$ on $m$ or $n$, say on $m$. Since no $4$-secant through a point of $K_1$ contains a point of $K_2$, and no $4$-secant through a point of $K_6$ contains a point of $K_3$, we find that $m$ does not contain a point of $K_1$, nor $K_6$, and also not $K_4$ since all points of $K_4$ lie on $2$-secants through a point of $K_3$. It follows that $m$ needs to contain two points with colours from $2,3$ or $5$ adding up to $9$, a contradiction. The same reasoning (replacing $i$ with $7-i$ throughout) shows that every point of $K_4$ lies on a unique $4$-secant. We conclude that every point of  $\S$ lies on precisely one $4$-secant to $\S$. It follows that each point lies on $35$ $3$-secants and $14$ $2$-secants. But then $|\S|35=3f$ where $f$ is the number of $3$-secants, a contradiction since $|S|=88$ and $88$ nor $35$ are a multiple of $3$.
			\item If $|K_3|=16$, then applying Lemma \ref{KvsX}(iv) and considering the lines though a point of $K_6$ shows that the unique $4$-secant necessarily has one point of $K_2$ and two points of $K_3$. Hence, the colour distribution is $(14,15,16,14,14,15)$. Looking at the lines through a point of $K_2$ shows that the unique $4$-secant to it contains two points of $K_3$ and one point of $K_6$. In particular, a line through a point of $K_1$ and $K_2$ is a $3$-secant. Looking at the lines through a point $R$ of $K_1$, we see that, since $|K_2|=15$ and $|K_4|=14$, not all points of $K_2$ lie on $3$-secants through $R$, a contradiction.
			\item If $|K_3|=17$ then the colour distribution is $(14,15,17,14,14,14)$. Considering the lines through a point $R$ of $K_3$ and applying Lemma \ref{KvsX}(iv) shows that the unique $4$-secant through it needs to contain $3$ points of $K_3$. But then the fourth point belongs to $K_5$, a contradiction since $|K_5|=|K_6|=14$, and every line through $R$ and a point of $K_6$ is a $3$-secant containing a point of $K_5$.
		\end{itemize}
		\item {\bf Now assume that $|K_2|=16$.} Considering the lines through a point of $K_6$, and applying Lemma \ref{KvsX}(iv), shows that the unique $4$-secant through it contains two points of $K_2$ and one of each of $K_4$ and $K_6$, and that the colour distribution is $(14,16,14,15,14,15)$. Considering the lines through a point of $K_2$ shows that a point of $K_1$ and $K_2$ never occur on a $4$-secant (the unique $4$-secant through a point of $K_2$ contains two points of $K_2$ and one of each of $K_4$ and $K_6$). But then, looking at the lines through a point of $K_1$ shows that this point lies on either one $5$-secant or on two $4$-secants. Since $|K_2|=|K_4|+1$, there is a point of $K_2$ contained in these, which by the above, means that it needs to be a $5$-secant. However, since $|K_5|=14$, a point of $K_2$ does not lie on a $5$-secant, a contradiction.
		\item {\bf Finally assume that $|K_2|\geq 17$.} Considering the lines through a point of $K_6$ shows that there are at least $15$ $3$-secants through it with a point of $K_2$, leading to at least $16$ points in $K_6$, a contradiction (since every colour occurs at least $14$ times, $|\S|=88$ and $|K_2|\geq 17$).
	\end{itemize}
	We find a contradiction in each of the four cases, so we find that the assumption $Z=14$ leads to a contradiction. Since $Z\in \{13,14\}$, as we mentioned in the beginning of the proof, and both cases lead to a contradiction, this finishes the proof.
	\end{proof}

\begin{lemma}\label{lem:eps2p11}
	Let $p=11$ and $|\S|=2p^2-2p+4$, then $\c$ does not use all colours.
\end{lemma}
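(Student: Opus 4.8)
The plan is to assume $|\K|=p-1=10$ and derive a contradiction, organised by the value of $X=\min\{x_A\mid A\in\S\}$. Here $\epsilon=2$, so Lemma \ref{lem:prenotallcolours}(i) applies and gives $X\in\{2p-1,2p\}=\{21,22\}$. The case $X=22$ needs no new work: Lemma \ref{lem:prenotallcolours}(iii) is stated for $p\geq 11$ and already yields $|\K|<p-1$. Thus everything reduces to the case $X=2p-1=21$, which Lemma \ref{lem:prenotallcolours}(ii) only handled for $p\geq 13$.

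For $X=21$ I would reuse the opening of the proof of Lemma \ref{lem:prenotallcolours}(ii): take $Q$ with $x_Q=21$ and, after scaling $\c$, assume $Q\in K_{10}$. Then Lemma \ref{KvsX}(iii) gives $|K_1|=21$ and that $Q$ lies on only $2$- and $3$-secants, Lemma \ref{lem:atmostone2p-1} gives $|K_\mu|\geq 22$ for all $\mu\neq 1$, and Lemma \ref{KvsX}(ii) promotes this to: \emph{every} point of $K_{10}$ lies on only $2$- and $3$-secants. Counting the $3$-secants through $Q$ that carry a point of $K_2$ gives $|K_{10}|=|K_2|+1\geq 23$. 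Since $\sum_{i=1}^{10}|K_i|=|\S|=224$ and $|K_1|=21$, the total excess is $\sum_{i=2}^{10}(|K_i|-22)=203-198=5$; this budget of $5$ is what I intend to overload.

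The new step bounds seven colour classes at once. Fix $a\in\{3,4,\dots,9\}$ and a point $T\in K_a$ (which exists because all colours occur). For each $P\in K_{10}$ the line $TP$ passes through $P$, hence is a $2$- or $3$-secant; it is not a $2$-secant because $a+(p-1)\not\equiv 0\pmod p$, so it is a $3$-secant whose third point has colour $1-a\pmod p$. Distinct $P$ give distinct lines through $T$, so distinct third points, whence $|K_{1-a}|\geq |K_{10}|\geq 23$. As $a$ runs over $\{3,\dots,9\}$ the residue $1-a\bmod 11$ runs bijectively over $\{3,\dots,9\}$, so $|K_c|\geq 23$ for all seven colours $c\in\{3,\dots,9\}$. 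Hence $\sum_{i=2}^{10}(|K_i|-22)\geq 7>5$, the desired contradiction.

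The main obstacle is precisely the case $X=21$: the ``two consecutive colours of size $2p$'' trick of Lemma \ref{lem:prenotallcolours}(ii) really does fail at $p=11$, since the excess can sit on the classes so that every consecutive pair in $\{2,\dots,9\}$ meets a class of size $23$ (for instance $(|K_i|)_{i=1}^{10}=(21,22,23,22,23,22,23,22,23,23)$). The resolution is to abandon the search for one good pair and instead let every point of $K_{10}$ force a third point of prescribed colour on a $3$-secant through an external point $T$, running this over all admissible $a$. I expect the only delicate bookkeeping to be the distinctness of the produced points and the exclusion of the values $a\in\{1,2,10\}$, where $TP$ becomes a $2$-secant or the third point falls back into $K_{10}$ (so that no new class is bounded).
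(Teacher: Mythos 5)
Your proof is correct, but it takes a genuinely different route from the paper's. Both proofs start identically: by Lemma \ref{lem:prenotallcolours}(i) and (iii) only $X=2p-1=21$ remains, and after scaling so that the minimizing point $Q$ lies in $K_{10}$, Lemma \ref{KvsX} gives $|K_1|=21$, that every point of $K_{10}$ lies only on $2$- and $3$-secants, and (with Lemma \ref{lem:atmostone2p-1}) $|K_\mu|\geq 22$ for $\mu\neq 1$ and $|K_{10}|=|K_2|+1\geq 23$. From there the paper proceeds by cases on $|K_2|$ ($=22$, $=23$, $\geq 24$), in each case invoking the \emph{unique $4$-secant} through carefully chosen points (Lemma \ref{KvsX}(iv)) to pin down the entire colour distribution and then extract a parity or counting contradiction --- a long analysis with several subcases. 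You avoid $4$-secants altogether: for each $a\in\{3,\dots,9\}$ and $T\in K_a$, every $P\in K_{10}$ forces a $3$-secant $TP$ (not a $2$-secant since $a\neq 1$) whose third point has colour $1-a\bmod 11$; injectivity holds because a line through $T$ with two points of $K_{10}$ would have colour sum $a+9\not\equiv 0\pmod{11}$ (here $a\neq 2$ is used), so $|K_{1-a}|\geq|K_{10}|\geq 23$, and since $1-a$ runs bijectively over $\{3,\dots,9\}$ this puts excess at least $7$ against the available budget $\sum_{i=2}^{10}(|K_i|-22)=5$. I verified the delicate points you flagged (distinctness of the third points, exclusion of $a\in\{1,2,10\}$) and they hold. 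Your argument is not only shorter but strictly more general: the same count, for arbitrary $p$ with $X=2p-1$, yields at least $p-4$ colours of size $\geq 2p+1$ against the same budget of $5$, so it would also replace the paper's proof of Lemma \ref{lem:prenotallcolours}(ii) for all $p\geq 11$; the paper's method, by contrast, produces explicit candidate colour distributions along the way, which is structural information your approach discards.
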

\begin{proof}
	Assume that $|\K|=p-1$ and define $X=\min\{x_{A}\mid A\in\S\}$. By Lemma \ref{lem:prenotallcolours} we only need to deal with the case $X=2p-1=21$. In that case, there is a point $A$ with $x_A=2p-1$. Without loss of generality $A\in K_{10}$, so it follows from Lemma \ref{KvsX}(iii) that $|K_1|=2p-1$ and that $A$ lies only on $2$-secants and $3$-secants. Looking at the $3$-secants through $A$, it follows that $|K_2|+1=|K_{10}|$, $|K_6|$ is even, and
	\begin{align}
	|K_\mu|=|K_{12-\mu}| \ \text{for}\ \mu=3,4,5.\label{mus}
	\end{align}
	It follows from Lemma \ref{lem:atmostone2p-1} that every colour, different from $1$, occurs at least $2p$ times. 
		\par Suppose first that $|K_2|=2p$ and consider a point $Q\in K_9$. Then $Q$ lies on a unique $4$-secant $\ell$ by Lemma \ref{KvsX}(iv), and a $3$-secant through $Q$ and a point of $K_1$ contains a second point of $K_1$. Since $|K_1|$ is odd, it follows that $\ell$ contains exactly one point of $K_1$. The other two points on $\ell$, different from $Q$ and the point of $K_1$ have colours adding up to $12$. In particular, since every point of $K_2$ lies on a $2$-secant through $Q$, there does not lie a point of $K_{10}$ on $\ell$. Since $|K_{10}|=2p+1$, and every point of $K_{10}$ is joined to $Q$ with a $3$-secant containing a point of $K_3$, we have that $|K_3|\geq 2p+1$. By Equation \eqref{mus} this implies that $|K_9|\geq 2p+1$. We now distinguish between two cases.
	\begin{itemize}
		\item Suppose that $|K_4|=2p$ (along with $|K_2|=2p$). Consider a point $R$ of $K_7$, then $R$ lies on a unique $4$-secant $m$ by Lemma \ref{KvsX}(iv). Since every point of $K_3$ on a $3$-secant through $R$ gives rise to a point of $K_1$ on it, and $|K_1|=2p-1$ and $|K_3|\geq 2p+1$, $m$ contains $2$ points of $K_3$, and $|K_3|=2p+1$. It follows that the points on $m$ belong to $K_3$, $K_7$ and $K_9$. By considering the $3$-secants through $R$, it then follows that $|K_5|=|K_{10}|$ and $|K_7|=|K_8|+1$. We conclude, using Equation \eqref{mus} and $|\S|=2p^2-2p+4$, that the only possibility for the colour distribution in this case is $(2p-1,2p,2p+1,2p,2p+1,2p,2p+1,2p,2p+1,2p+1)$, and in particular, that $|K_8|=2p$. Let $T$ be a point of $K_3$ on $m$. Then every 3-secant through $T$ containing a point of $K_1$ contains a point of $K_7$. Since there is only one point of $K_7$ on the unique $4$-secant $m$ through $T$, we have $|K_7|=|K_1|+1=2p$, a contradiction.
		\item Suppose that $|K_4|\geq 2p+1$ (along with $|K_2|=2p$). Again, using Equation \eqref{mus} and $|\S|=2p^2-2p+4$, it follows that the only possible colour distribution is $(2p-1,2p,2p+1,2p+1,2p,2p,2p,2p+1,2p+1,2p+1)$. Consider a point $U$ of $K_5$, then, since $|K_6|=2p$, $U$ lies on a unique $4$-secant $n$ by Lemma \ref{KvsX}(iv). Since $|K_2|=2p$ and $|K_4|=2p+1$, $n$ necessarily contains a point of $K_4$, and since $|K_3|$ is odd, $n$ contains a point of $K_3$. It follows that $n$ contains a point of $K_3$, $K_4$, $K_5$ and $K_{10}$. Consider the point $V$ of $K_4$ on $n$. Since $|K_7|=2p$, $n$ is the unique $4$-secant through $V$ by Lemma \ref{KvsX}(iv). Since $n$ does not contain a point of $K_1$ nor $K_6$, the line through $V$ and a point of $K_6$ is a $3$-secant, which also contains a point of $K_1$. This means that $|K_1|=|K_6|$, a contradiction.
	\end{itemize}
	\par In both cases we obtained a contradiction, so $|K_{2}|\neq 2p$. Now, suppose that $|K_2|=2p+1$.  It follows that $|K_{10}|=2p+2$. We again distinguish between two cases.
	\begin{itemize}
		\item Suppose that $|K_3|=2p$ (along with $|K_2|=2p+1$). It follows that $|K_9|=2p$. Consider a point $R$ of $K_8$. Then, since $|K_3|=2p$, $R$ lies on a unique $4$-secant $m$ by Lemma \ref{KvsX}(iv). Since $|K_1|=2p-1$ and $|K_2|=2p+1$, $m$ contains two points of $K_2$. It follows that $m$ contains 2 points of $K_2$ and a point of each of $K_8$ and $K_{10}$. A $3$-secant through $R$ with a point of $K_4$ contains a point of $K_{10}$, and thus it follows that $|K_4|+1=|K_{10}|$. This in turns implies that the colour distribution is $(2p-1,2p+1,2p,2p+1,2p,2p,2p,2p+1,2p,2p+2)$.
		Consider then a point $S$ of $K_2$ on $m$. Since $|K_9|=2p$, $m$ is the unique $4$-secant through $S$  by Lemma \ref{KvsX}(iv). A $3$-secant through $S$ with a point of $K_4$ contains a point of $K_5$, and since $m$ does not contain points of $K_4$ nor $K_5$, it follows that $|K_4|=|K_5|$, a contradiction.
		\item Suppose that $|K_3|\geq 2p+1$ (along with $|K_2|=2p+1$). Since $|K_9|=|K_3|$ it immediately follows that the colour distribution must be $(2p-1,2p+1,2p+1,2p,2p,2p,2p,2p,2p+1,2p+2).$
		Consider a point $T$ of $K_7$. Since $|K_4|=2p$, $T$ lies on a unique $4$-secant $m$. Since $|K_7|=|K_8|=2p$, $m$ contains a point of $K_8$. Furthermore, since $|K_1|=2p-1$ and $|K_3|=2p+1$, $m$ needs to contain two points of $K_3$. But $3+3+7+8\not\equiv0\pmod{11}$, a contradiction.
	\end{itemize}
	\par In both cases we obtained a contradiction, so $|K_{2}|\neq 2p+1$. Now, suppose that $|K_2|\geq 2p+2$. This implies that $|K_{10}|\geq 2p+3$. It follows that $|K_\mu|=2p$ for $\mu\neq 1,2,10$, $|K_2|=2p+2$ and $|K_{10}|=2p+3$. Consider a point $U$ of $K_8$. Since $|K_3|=2p$, $U$ lies on a unique $4$-secant $n$ by Lemma \ref{KvsX}(iv). A $3$-secant through $U$ and a point of $K_2$ contains a point of $K_1$. But since $|K_1|=2p-1$ and $|K_2|=2p+2$, $n$ needs to contain $3$ points of $K_2$, a contradiction since $2+2+2+8\not\equiv0\pmod{11}$.
\end{proof}

\subsection{The proof of the Main Theorem}

In this subsection we will complete the proof of the Main Theorem. We may assume without loss of generality that $\c$ is scaled such that there is a $Q\in K_{p-1}$ with $x_Q=\min\{x_A\mid A\in \S\}$. We will make this assumption throughout this subsection. Recall from Corollary \ref{cor:even} that $|\K|$ is even. We define $r=\frac{|\K|}{2}$. 

\begin{definition}
	We define the graph $G$ with vertices $\{1,2,\ldots,p-1\}$, where two vertices $\alpha,\beta$ are adjacent if and only if $\alpha+\beta=p$ or $\alpha+\beta=p+1$.
\end{definition}

It's easy to see that $G$ is a path $1,p-1,2,p-2,3,\ldots,(p-1)/2,(p+1)/2$, with a loop added to the end vertex $(p+1)/2$. Note that the degree of the vertex $1$ is one, and all other vertices (including the loop vertex $(p+1)/2$) have degree two.

\begin{definition}
	The colours of the set $\K$ used in $\c$ define an induced subgraph $G'$ of $G$.
\end{definition}

\begin{lemma}\label{disconnected}
	If $\epsilon\in\{1,2\}$, the graph $G'$ has at most $2$ connected components, and if there are two connected components, the vertex $\frac{p+1}{2}$ is contained in $G'$. Furthermore, $r<p/6$.
\end{lemma}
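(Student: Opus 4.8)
The plan is to translate the connectivity of $G'$ into the combinatorics of the path $G$, and then to read the constraints off the secant geometry near a point realising the minimum number of $2$-secants.

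First I would record the shape of $G'$. By Corollary \ref{cor:even} the colours occur in pairs $\{k,p-k\}$, and in the path order $1,p-1,2,p-2,\dots$ the two colours of the pair $\{k,p-k\}$ sit on the consecutive vertices in positions $2k-1$ and $2k$, while the edge joining position $2k$ to position $2k+1$ links $p-k$ to $k+1$ (their sum is $p+1$). Hence a pair, once used, is internally connected, and two pairs with consecutive indices $k,k+1$ are adjacent in $G'$ exactly when both are used. Writing $I\subseteq\{1,\dots,(p-1)/2\}$ for the set of used pair-indices and $r=|I|$, the connected components of $G'$ are precisely the maximal runs of consecutive integers in $I$, and the loop vertex $\frac{p+1}{2}$ lies in the top pair, of index $(p-1)/2$. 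The three assertions then become: $I$ is a union of at most two integer intervals; if it is a union of two intervals then $(p-1)/2\in I$; and $|I|<p/6$.

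Next I would exploit the minimal point. After the scaling fixing $Q\in K_{p-1}$ with $x_Q=X=\min_{A\in\S}x_A$, applying Lemma \ref{KvsX}(i) to a point of $K_{p-\mu}$ (which exists by the pairing) gives $|K_\mu|\ge X\ge 2p+1-\epsilon$ for every $\mu\in\K$. On the lines through $Q$ a $2$-secant carries the colour $1$ (since $(p-1)+1\equiv 0$) and a $3$-secant carries two colours summing to $p+1$ — precisely the two edge-types of $G$. Counting colour classes on these short secants yields propagation relations $|K_\gamma|=|K_{p+1-\gamma}|$, with the single correction $|K_{p-1}|=|K_2|+1$ coming from excluding $Q$. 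I would first show that if $X=2p+1-\epsilon$, so that $Q$ lies only on $2$- and $3$-secants (Lemma \ref{KvsX}(iii)), then these relations force \emph{every} pair to be used: colour $1$ is used via the $2$-secants, and the relations push usage from each pair to its neighbour, closing only at the fixed point $\frac{p+1}{2}$ of $\gamma\mapsto p+1-\gamma$. This makes $\K$ the full colour set, contradicting $|\K|<p-1$; hence $X\ge 2p+2-\epsilon$ and, by \eqref{eq3}, $Q$ lies on at least one longer secant. Each such $\ge 4$-secant is a bounded \emph{defect} at which the propagation may break, i.e. at which a run of used indices is permitted to terminate, whereas pair $1$ terminates a run for free (colour $1$ lies on $2$-secants, off the $3$-secant chain) and the fold $\frac{p+1}{2}$ terminates a run for free (the loop). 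Bounding the total defect through $Q$ via \eqref{eq3} by the small excess $X-(2p+1-\epsilon)$ limits the number of run-ends that are not free; since every run has two ends and only pair $1$ and the fold are free, this caps the number of runs at two and, when there are two, forces $\frac{p+1}{2}$ to be an endpoint, giving the first two assertions.

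Finally, for $r<p/6$ I would run a weight count. Summing the propagation relations across a run telescopes the class sizes into a near-arithmetic pattern bounded below by $X\ge 2p$, so a run of $\ell$ pairs consumes at least roughly $4p\ell$ of the total $|\S|=2p^2-2p+2+\epsilon$; feeding this, for at most two runs, into the weight identity and using the defect bound to control the correction terms bounds $r$. The hard part — and the place where the argument must be done carefully rather than schematically — is exactly this last bookkeeping: once $Q$ meets longer secants the propagation equalities degrade to inequalities, so one must track simultaneously how the $\ge 4$-secants through $Q$ (i) permit a run to end, (ii) perturb the telescoping, and (iii) interact with the congruence $\mu(\c)\equiv 0\pmod p$ from \eqref{cmod}. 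Reconciling these three roles of the defect secants is what both pins the component count at two and sharpens the constant from the crude $p/2$ produced by the uniform lower bound $|K_\mu|\ge 2p$ down to the claimed $p/6$.
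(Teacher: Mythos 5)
You have correctly isolated the paper's key mechanism: after scaling so that $Q\in K_{p-1}$ realises $x_Q=\min_{A\in\S}x_A$, a $3$-secant through $Q$ carries two further colours summing to $p+1$ (an edge of $G$), so if $\beta$ is a degree-one vertex of $G'$ other than $1$ (whose points account for the $2$-secants) then \emph{no} point of $K_\beta$ lies on a $2$- or $3$-secant through $Q$, i.e.\ the entire class $K_\beta$, of size at least $x_Q$ by Lemma \ref{KvsX}(i), is confined to the $\geq 4$-secants through $Q$. But your quantitative completion fails at exactly the decisive points. First, ``bounding the total defect through $Q$ \ldots by the small excess $x_Q-(2p+1-\epsilon)$'' is unjustified as stated: a priori $x_Q$ can be as large as roughly $|\S|/(2r)$, so the excess need not be small, and it is never explained how much defect one non-free run-end consumes. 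What actually closes the argument along your route is the comparison of class size against capacity: by \eqref{eq3} the $\geq4$-secants through $Q$ carry at most $3\bigl(x_Q-(2p+1-\epsilon)\bigr)$ points of $\S\setminus\{Q\}$, so $k$ non-free run-ends (disjoint confined classes) force $kx_Q\leq 3\bigl(x_Q-(2p+1-\epsilon)\bigr)$, which is impossible for $k\geq 3$, caps the components at two with $\frac{p+1}{2}$ present, and for $k=2$ yields $x_Q\geq 3(2p+1-\epsilon)$. You never state the confinement of the full class nor this inequality, and without it the component cap is asserted rather than proved.

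Second, the $r<p/6$ bound is genuinely missing from your plan: the telescoping/weight count you propose gives, by your own admission, only the crude $r\lesssim p/2$, and you defer the sharpening to unspecified ``bookkeeping'' involving \eqref{cmod}, which is in fact not needed at all. Once $x_Q\geq 6p-O(1)$ is in hand (from the $k=2$ inequality above, or from the paper's count), the bound is immediate: every colour class has size at least $x_Q$ by Lemma \ref{KvsX}(i) and minimality, so $2rx_Q\leq|\S|\leq 2p^2-2p+4$ gives $r<p/6$ for $p\geq 7$. For comparison, the paper's proof is a shorter direct count that avoids \eqref{eq3} altogether: since the $2y_Q$ points on $3$-secants through $Q$ all have colours of degree different from one, while each of the $d$ degree-one classes contributes at least $x_Q$ points, one gets $2y_Q\leq |\S|-dx_Q$; combining with \eqref{eq1} yields $(d-4)x_Q\leq 3\epsilon-6p-2<0$, hence $d=3$ (two components, one containing $\frac{p+1}{2}$) and $x_Q\geq 6p+2-3\epsilon$, whence $r<p/6$. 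Your route can be repaired with the class-versus-capacity inequality above (and your preliminary reduction via the not-all-colours lemmas is legitimate but redundant, since disconnectedness itself produces a confined class and hence long secants through $Q$), but as written both conclusions of the lemma rest on steps you explicitly leave undone.
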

\begin{proof}
	Assume that $G'$ is disconnected. Then there are at least $3$ vertices of degree $1$, one of which is $1$. Let $d$ be the number of vertices of degree $1$ in $G'$. Recall that if $\alpha\in \K$, then $p-\alpha\in \K$ (indeed, by Lemma \ref{2secants} every point lies on at least $2p-1$ 2-secants). So every vertex in $G'$ has at least degree $1$, i.e.~there are no isolated vertices in $G'$. Hence, if a colour $\beta$ has degree $1$ in $G'$, then $p+1-\beta\notin\K$. Consider a $3$-secant $n$ to $\S$ through $Q$. If $n$ contains a point of $K_\beta$, different from $Q$, then the third point of $\S$ on $n$ has colour $p+1-\beta$. This shows that no point on a $3$-secant to $\S$  through $Q$ has degree $1$ in $G'$.
	\par Hence, the $2y_Q$ points that are different from $Q$ and lie on a 3-secant through $Q$, all have a colour of degree different from 1. Since each colour of degree $1$ occurs at least $x_Q$ times by Lemma \ref{KvsX}(i) and the definition of $Q$, we find that there are at most $|\S|-dx_Q$ points with a colour of degree different from $1$. Hence,
	\[
		2y_Q\leq |\S|-dx_Q.
	\]
	From Equation \eqref{eq1} in Lemma \ref{lem:nrsecants} we know that $-2y_Q\leq 4x_Q-2p^2-4p-4+2\epsilon$, so we find that
	\[
	dx_Q\leq 2p^2-2p+2+\epsilon-2y_Q\leq 4x_Q+3\epsilon-6p-2,
	\]
	and finally that
	\begin{align}
		(d-4)x_Q\leq 3\epsilon-6p-2.\label{degree}
	\end{align}
	Since $\epsilon\in \{1,2\}$, the right hand side is strictly smaller than zero. We conclude that $d<4$, and hence, since $d\geq 3$, that $d=3$. We find that if $G'$ is disconnected, there are exactly two components, one of which then necessarily contains the vertex $\frac{p+1}{2}$. Furthermore, since $d=3$, Equation \eqref{degree} implies that $x_Q\geq 6p-4$. Recall from Lemma \ref{KvsX} that every colour occurs at least $x_Q$ times, so $2rx_Q\leq 2p^2-2p+4$. It follows that $r\leq \frac{p^2-p+2}{6p-4}<p/6$ since $p\geq 7$.
\end{proof}

\begin{lemma}\label{lem:connected}
	If $\epsilon\in\{1,2\}$, the graph $G'$ is connected.
\end{lemma}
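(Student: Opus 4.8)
The plan is to argue by contradiction: suppose $G'$ is disconnected. By Lemma~\ref{disconnected} there are then exactly two connected components, the vertex $\tfrac{p+1}{2}$ lies in one of them, and $r<p/6$; moreover the proof of that lemma forces $x_Q\geq 6p-4$. First I would translate the disconnectedness into a statement about $\K$ itself. Since $G$ is the path $1,p-1,2,p-2,\dots,\tfrac{p-1}{2},\tfrac{p+1}{2}$ with a loop at the end, each antipodal pair $\{j,p-j\}$ occupies two consecutive vertices; the sum-$p$ edges lie inside these pairs and the sum-$(p+1)$ edges join consecutive pairs. As $Q\in K_{p-1}$ we have $1\in\K$, so the component of $1$ is a block of pairs starting at $\{1,p-1\}$, while the other component contains the loop. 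Hence $\K=[1,a]\cup[b,p-b]\cup[p-a,p-1]$ with $b\geq a+2$, where $C_1=[1,a]\cup[p-a,p-1]$ is the component of $1$ and $C_2=[b,p-b]$ is the component of $\tfrac{p+1}{2}$, and the three degree-$1$ vertices of $G'$ are exactly $1$, $p-a$ and $b$.

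Next I would record the reachability of colours from $Q$ in the chosen scaling. A $2$-secant through $Q\in K_{p-1}$ meets $\S$ again in a point of colour $1$, and a $3$-secant through $Q$ meets it in two points whose colours sum to $p+1$ (this is precisely the source of the sum-$(p+1)$ edges of $G$). The sum-$(p+1)$ partners of the degree-$1$ colours $b$ and $p-a$ are $p+1-b$ and $a+1$; since $b\geq a+2$, both lie in the gaps $(p-b,p-a)$ and $(a,b)$ and are therefore unused. Consequently no $2$- or $3$-secant through $Q$ meets $K_b$ or $K_{p-a}$, so \emph{every} point of $K_b\cup K_{p-a}$ lies on a secant of length at least $4$ through $Q$. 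The same computation shows colour $1$ has no admissible $3$-secant partner through $Q$ (it would be colour $p$), so the points of $K_1$ split into the $x_Q$ second points of the $2$-secants and a possible remainder lying on $\geq 4$-secants.

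Finally I would count the points carried by the long secants through $Q$. By \eqref{eq3} there are at most $x_Q-2p-1+\epsilon$ secants of length $\geq 4$ through $Q$, carrying at most $3(x_Q-2p-1+\epsilon)$ further points of $\S$ in total; yet these must accommodate all of $K_b\cup K_{p-a}$, each class of size at least $x_Q$ by Lemma~\ref{KvsX}(i). By Lemma~\ref{disconnected} every used colour occurs at least $x_Q$ times while $2r<p/3$, so each colour class is in fact of size about $|\S|/2r$, which is large. The contradiction should come from confronting this largeness with the scarcity of admissible long secants through $Q$.

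I expect this last step to be the genuine obstacle. The first-order companion count is exactly balanced at the threshold $r=p/6$ produced by Lemma~\ref{disconnected}, so it does not close on its own, and the extra leverage must come from the precise admissible colour-multisets on the long secants through $Q$: their non-$Q$ colours sum to $1\bmod p$, and with only $2r<p/3$ colours available the possible multisets are heavily constrained. Combining this with the minimality of $x_Q$ — every point of $\S$, in particular the points of the least reachable colours $b$ and $p-a$ at the inner ends of the two blocks, has at least $x_Q$ $2$-secants — and playing a symmetric analysis centred at a point of $K_{(p+1)/2}\subseteq C_2$ against the very scaling that placed the global minimiser in $K_{p-1}$, one should force a strict inequality. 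Turning that tension into a numerical contradiction is the crux of the argument.
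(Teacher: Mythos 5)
Your structural setup is correct and matches the paper's: with $a=t$ and $b=\alpha=\frac{p+1}{2}-r+t$ your decomposition $\K=[1,a]\cup[b,p-b]\cup[p-a,p-1]$ is exactly the paper's parametrization of the two components, and your two observations — that no $2$- or $3$-secant through $Q$ meets $K_b\cup K_{p-a}$, and that by \eqref{eq3} the secants of length at least $4$ through $Q$ carry at most $3(x_Q-2p-1+\epsilon)$ points of $\S\setminus\{Q\}$ in total — are both right. But, as you yourself flag, the argument never closes: your count only yields $2x_Q\leq 3(x_Q-2p-1+\epsilon)$, i.e.\ $x_Q\geq 6p+3-3\epsilon$, which is perfectly compatible with the bound $x_Q\geq 6p-4$ coming from the proof of Lemma \ref{disconnected}, and the ``symmetric analysis centred at a point of $K_{(p+1)/2}$'' you gesture at is never developed into an inequality. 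That final step is a genuine gap, not a routine verification.

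The missing idea is a two-step pigeonhole that isolates a \emph{single $4$-secant}, after which the colour arithmetic you correctly anticipate becomes a finite check. First, counting the points outside $K_\alpha\cup K_{p-t}$ on lines through $Q$ gives $1+x_Q+2y_Q\leq|\S|-2x_Q$ (using $|K_\alpha|,|K_{p-t}|\geq x_Q$ from Lemma \ref{KvsX}(i)), which combined with \eqref{eq2} yields $z_Q\geq 4p+2-2\epsilon>0$. Second — and this is the crux — if \emph{every} $4$-secant through $Q$ carried at least one non-$Q$ point outside $K_\alpha\cup K_{p-t}$, then $x_Q+2y_Q+z_Q\leq|\S|-2x_Q$, so $3x_Q+2y_Q+z_Q\leq 2p^2-2p+4$, contradicting \eqref{eq2}. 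Hence some $4$-secant $n$ through $Q$ has all three of its other points in $K_\alpha\cup K_{p-t}$, and if $i\in\{0,1,2,3\}$ of them lie in $K_{p-t}$, then $p-1+i(p-t)+(3-i)\alpha\equiv0\pmod{p}$. Substituting $\alpha=\frac{p+1}{2}-r+t$, the four cases force $6(r-t)-1=p$, $2r-t\equiv0$, $2(r+t)+1\equiv0$ and $3t+1\equiv0\pmod{p}$ respectively, all impossible since $1\leq t<r<p/6$. Note the contrast with your plan: the leverage does not come from the totality of long secants, nor from a second base point in $K_{(p+1)/2}$, but from restricting attention to $4$-secants, where the colour multiset has only four entries and the list of admissible multisets is rigid enough to contradict $r<p/6$.
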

\begin{proof}
	Assume that $G'$ is not connected. Then Lemma \ref{disconnected} shows that it has two connected components, one of which contains $(p+1)/2$. Let $Q$ be as before. Suppose that the other degree $1$ vertex in the component of $1$ is $\gamma$; since $\beta\in \K$ implies that $p-\beta\in \K$, $\gamma=p-t$ for some $t<\frac{p-1}{2}$, so this component is a path of length $2t$. Since $|\K|=2r$, it follows that the third degree $1$ vertex is $\alpha=\frac{p+1}{2}-r+t$.
	\par We know that no points of $K_{p-t}$ nor $K_\alpha$ occur on $3$-secants through $Q$. Counting the points on lines through $Q$, not in $K_\alpha$ nor $K_{p-t}$ yields that $1+x_Q+2y_Q\leq |\S|-2x_Q$, and hence, $3x_Q+2y_Q\leq 2p^2-2p+\epsilon+1$. From Equation \eqref{eq2} we have that $3x_Q+2y_Q+z_Q\geq 2p^2+2p+3-\epsilon$. It follows that  $2p^2-2p+\epsilon+1+z_Q\geq 3x_Q+2y_Q+z_Q\geq 2p^2+2p+3-\epsilon$, so $z_Q\geq 4p+2-2\epsilon>0$.
	\par Now suppose that each of the $z_Q$ $4$-secants through $Q$ has at least one point, different from $Q$, that does not lie in $K_\alpha\cup K_{p-t}$. Then there are at least $x_Q+2y_Q+z_Q$ points not in $K_\alpha\cup K_{p-t}$, and hence,
	$$x_Q+2y_Q+z_Q\leq |\S|-2x_Q\leq 2p^2-2p+4-2x_Q,$$
	so $3x_Q+2y_Q+z_Q\leq 2p^2-2p+4$ but Equation \eqref{eq2} shows that $3x_Q+2y_Q+z_Q\geq  2p^2+2p+1$, a contradiction.
	\par So, there is a $4$-secant through $Q$, say $n$, containing only points of $K_\alpha\cup K_{p-t}$ apart from $Q$. Suppose it contains $i\in \{0,1,2,3\}$ points of $K_{p-t}$, then the sum of colours on $n$ is
	$$p-1+i(p-t)+(3-i)\alpha$$
	which must be congruent to $0\pmod{p}$. If $i=0$, we find that $3\alpha-1=3(\frac{p+1}{2}-(r-t))-1$ must be a multiple of $p$, which, since $0\leq\alpha\leq \frac{p-1}{2}$ implies that $3(\frac{p+1}{2}-(r-t))-1=p$, leading to $6(r-t)-1=p$, a contradiction since $1\leq t<r<p/6$ by Lemma \ref{disconnected}. Similarly, when $i=1$, we find that $2r-t$ needs to be a multiple of $p$, when $i=2$, that $2(r+t)+1$ needs to be a multiple of $p$ and when $i=3$ that $3t+1$ needs to be a multiple of $p$, all of which lead to a contradiction since $1\leq t<r<p/6$.
\end{proof}

\begin{theorem}\label{final}
	There are no code words of size at most $2p^2-2p+4$, and using more than $2$ colours, in $\C(\Pi)^\bot$, where $\Pi$ is a projective plane of order $p^2$, $p\geq 7$.
\end{theorem}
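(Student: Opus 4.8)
The plan is to argue by contradiction, assuming $\c$ uses more than two colours, and to assemble the structural lemmas proved above. Since $w(\c)\le 2p^2-2p+4$ forces $\epsilon\in\{1,2\}$, and since $|\K|$ is even by Corollary \ref{cor:even}, ``more than two colours'' means $|\K|\ge 4$, i.e.\ $r\ge 2$. The first step is pure bookkeeping: for $\epsilon=1$ the conclusion $|\K|<p-1$ follows from Lemma \ref{lem:notallcolours1} (for $p\ge 11$) together with Lemma \ref{lem:eps1p7} (for $p=7$), and for $\epsilon=2$ from Lemma \ref{lem:notallcolours2} (for $p\ge 13$) together with Lemmas \ref{lem:eps2p7} and \ref{lem:eps2p11} (for $p\in\{7,11\}$). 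Hence in every case $\c$ does not use all colours.

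Next I would feed this into the connectivity result. By Lemma \ref{lem:connected} the induced subgraph $G'$ is connected. Recall that $Q\in K_{p-1}$ was chosen to minimise $x_A$, so $p-1\in\K$ and hence, by closure of $\K$ under $\beta\mapsto p-\beta$ (every point lies on a $2$-secant by Lemma \ref{2secants}), also $1\in\K$. Since $1$ is the unique degree-$1$ end vertex of the path $G$, and $G'$ is a connected induced subgraph closed under the involution $\beta\mapsto p-\beta$ (whose pairs are precisely the ``sum $=p$'' edges of $G$), $G'$ must be the initial segment $1,p-1,2,p-2,\dots,r,p-r$ of $G$. Equivalently $\K=\{1,2,\dots,r\}\cup\{p-r,\dots,p-1\}$, whose two degree-$1$ colours are exactly $1$ and $p-r$.

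The heart of the proof is then a local analysis at $Q$, modelled on the end of Lemma \ref{lem:connected}. As in the proof of Lemma \ref{disconnected}, no point of a degree-$1$ colour lies on a $3$-secant through $Q$: a $3$-secant through $Q$ (of colour $p-1$) carries a pair of colours summing to $p+1$, and for an endpoint colour the partner lies outside $\K$. Thus every point of $K_{p-r}$, and every point of $K_1$ off the $x_Q$ $2$-secants through $Q$, lies on a line through $Q$ meeting $\S$ in at least $4$ points. Counting the points of $\S$ on lines through $Q$ as in Lemma \ref{lem:nrsecants}, using $|K_\mu|\ge x_Q$ for all $\mu\in\K$ (Lemma \ref{KvsX}(i)) together with Equation \eqref{eq1}, already forces $x_Q$ to be of order $3p$, hence $2r\,x_Q\le|\S|$ keeps $r$ small. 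The goal is to exhibit a single $4$-secant $\ell\ni Q$ whose three remaining points all lie in $K_1\cup K_{p-r}$: if it carries $i$ points of colour $p-r$ and $3-i$ of colour $1$, then by Equation \eqref{clmod} one computes $\mu(\c|_\ell)\equiv 2-i(1+r)\pmod p$, so $i(1+r)\equiv 2\pmod p$ with $i\in\{0,1,2,3\}$. For $r\ge 2$ this has no solution once $r$ is small (the only candidates are $r\equiv 1$, $r\equiv 0$, or $r\equiv -3^{-1}$, and the last forces $r\ge p/4$), which is the desired contradiction and yields $r=1$.

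The main obstacle is precisely producing such an all-endpoint $4$-secant. In the disconnected case (Lemma \ref{lem:connected}) the two extreme colours are disjoint from the $2$-secant colour through $Q$, so the $2$-secant points, the $3$-secant points, and at least one point per $4$-secant are all ``outside'', and Equation \eqref{eq2} is violated outright. Here one endpoint colour ($1$) coincides with the $2$-secant colour, so the same count only yields a strengthened lower bound on $x_Q$ (of order $4p$) rather than an immediate contradiction. The refinement I would pursue is to track $K_{p-r}$ on its own: an exact $4$-secant through $Q$ contains at most one point of $K_{p-r}$, since two would force a third colour $1+2r\notin\K$; controlling the longer secants then requires the modular colour-sum condition together with the smallness of $r$, and feeding this back through the excess identity \eqref{eq3} and Equation \eqref{eq2} should force either an all-endpoint $4$-secant or a direct numerical contradiction. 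This interplay between the enumeration of $\ge 4$-secants and the modular condition $\mu(\c|_\ell)\equiv 0\pmod p$ is where the real work lies.
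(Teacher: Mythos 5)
Your assembly of the preliminary lemmas is correct and matches the paper exactly: the reduction to $\epsilon\in\{1,2\}$, the ``not all colours'' step via Lemmas \ref{lem:notallcolours1}, \ref{lem:notallcolours2}, \ref{lem:eps1p7}, \ref{lem:eps2p7}, \ref{lem:eps2p11}, the connectivity of $G'$ via Lemma \ref{lem:connected}, and the identification $\K=\{1,\dots,r\}\cup\{p-r,\dots,p-1\}$ with $2\le r<\frac{p-1}{2}$. The gap is everything after that. Your plan hinges on producing a $4$-secant through $Q$ whose three other points all lie in $K_1\cup K_{p-r}$, but you never establish its existence, and in fact it cannot be forced: since points of $K_{p-r}$ avoid $3$-secants through $Q$, the lines through $Q$ that carry them may be \emph{long} secants, and the paper's count shows exactly this --- every line through $Q$ that is not a $2$-secant and contains at most one point (besides $Q$) outside $K_{p-r}$ has at least $\frac{p-1}{r}+1$ points of $\S$, which exceeds $4$ whenever $r<\frac{p-1}{3}$. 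The configurations one must actually rule out have about $p$ such lines, each filled with $\frac{p-1}{r}$ points of $K_{p-r}$, and no all-endpoint $4$-secant need exist. Moreover, even granting such a $4$-secant, your modular endgame fails precisely at $r=\frac{p-1}{3}$: there $1+2r=p-r\in\K$ (so your claim that two points of $K_{p-r}$ on a $4$-secant force the colour $1+2r\notin\K$ breaks down), and $(p-1)+3(p-r)\equiv -1-3r\equiv 0\pmod p$, so the $i=3$ case is modularly consistent. Your exclusion ``the last forces $r\ge p/4$'' implicitly relies on $r<p/6$ from Lemma \ref{disconnected}, but that bound is proved only under the disconnectedness hypothesis; in the connected case the count you describe yields only $x_Q\gtrsim 3p$ and hence $r\lesssim p/3$, which does not rule out $r=\frac{p-1}{3}$.

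What you defer as ``where the real work lies'' is in fact the bulk of the paper's proof. From the long-secant structure one obtains $x_Q\ge\frac{p(p-1)}{r}-1$ via Equation \eqref{eq3}, and hence the two-sided pinch $p^2-p-r\le rx_Q\le p^2-p+2$ of Equation \eqref{rxq}. The case $r\nmid p-1$ is killed by a direct count, and the surviving cases $x_Q\in\bigl\{\frac{p(p-1)}{r}-1,\ \frac{p(p-1)}{r}\bigr\}$ together with $(r,x_Q)=(2,\frac{p(p-1)}{2}+1)$ each require a \emph{second} local analysis at a point $R\in K_{p-r}$ (respectively $K_{p-2}$), with new line-sets $H'$, $H''$, the bound $x_R-x_Q\le 4$, and a separate argument for $r=2$, $p=7$. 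None of this is present in, or replaceable by, your modular shortcut, so the proposal as it stands does not prove the theorem.
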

\begin{proof}
Let $\c$ be a code word of $\C(\Pi)^\bot$ of size $2p^2-2p+2+\epsilon$ with $\epsilon\in \{1,2\}$ and let $\K$ be its set of colours. Assume that $|\K|>2$. From the assumption and Corollary \ref{cor:even}, it follows that $|\K|\geq 4$. We know that not all colours are used from Lemmas \ref{lem:notallcolours1}, \ref{lem:notallcolours2}, \ref{lem:eps1p7}, \ref{lem:eps2p7} and \ref{lem:eps2p11} and that $G'$ is connected from Lemma \ref{lem:connected}. 
Let $Q$ as before, that is, $x_Q$, the number of $2$-secants to $\S$ through $Q$ is minimal. We take a scalar multiple of $\c$ such that $Q\in K_{p-1}$. Since this implies that $1$ is a vertex of $G'$ too, and $|\K|=2r\neq p-1$, we know that the colours used are $1,p-1,2,p-2,\ldots,r,p-r$ with $2\leq r< \frac{p-1}{2}$. Let $H$ be the set of lines through $Q$ that are not a $2$-secant to $\S$ and which contain at most one point, different from $Q$, not contained in $K_{p-r}$. Let $|H|=x$. Then every line through $Q$, not in $H$ and not a $2$-secant, contains at least $2$ points not in $K_{p-r}$. There are $p^2+1-x-x_Q$ such lines, each containing at least $2$ points not belonging to $K_{p-r}$, different from $Q$. Furthermore, the point $Q$ does not belong to $K_{p-r}$, nor do the $x_Q$ points of $K_1$ that lie on $2$-secants through $Q$, thus giving the following lower bound for the points of $\S$ not in $K_{p-r}$:
\[
	2(p^2+1-x-x_Q)+x_Q+1\leq |\S|-|K_{p-r}|\:.
\]
Note that we used that $p-r\neq 1$ and $r\neq 1$.
Rewriting this expression, using that  $|\S|\leq 2p^2-2p+4$ and $|K_{p-r}|\geq x_Q$, yields that $x\geq p$. Let $\ell$ be a line of $H$. Suppose that the point on $\ell$, different from $Q$ and not necessarily contained in $K_{p-r}$ is contained in $K_s$. If $\ell$ has $y$ points of $K_{p-r}$, then, since $(\c,\ell)=0$, we have that $p-1+y(p-r)+s\equiv 0\pmod{p}$, so $s\equiv 1+yr\pmod{p}$. Either $1+yr>p$, and then $y>\frac{p-1}{r}$, or $0< 1+yr\leq p$ and $s=1+yr$. But since $1+yr>r$ and there are no points with colour in $\{r+1,r+2,\ldots,p-r-1\}$, we have that $1+yr\geq p-r$. The latter implies that $y\geq \frac{p-1}{r}-1$. We conclude that every line of $H$ contains at least $\frac{p-1}{r}-1+2=\frac{p-1}{r}+1$ points of $\S$. In particular, every line of $H$ contains more than $3$ points of $\S$. So from Equation \eqref{eq3}, using that there are at least $p$ lines in $H$, we obtain that $x_Q\geq 2p-1+p(\frac{p-1}{r}-2)=\frac{p(p-1)}{r}-1$. This leads to
\begin{align}
	p^2-p-r\leq rx_Q=\frac{|\K|}{2}x_{Q}\leq\frac{1}{2}\sum_{i\in\K}|K_{i}|=\frac{|\S|}{2}\leq p^2-p+2.\label{rxq}
\end{align}
First suppose that $r\nmid p-1$. Since every line of $H$ contains at least $\frac{p-1}{r}$ points of $\S$, different from $Q$, and $r \nmid p-1$ and $r\nmid p$, we find that every line of $H$ contains at least $\frac{p+1}{r}$ points of $\S$, different from $Q$. Counting the points of $\S$ per line through $Q$ shows that
\begin{align*}
	|\S| &\geq 1+x_Q+x\left(\frac{p+1}{r}\right)+2(p^2+1-x_Q-x)=2p^2+3+\frac{x(p-2r+1)}{r}-x_Q\\
	&\geq 2p^2-2p+3+\frac{p(p+1)}{r}-x_Q,
\end{align*}
since $x\geq p$. Since $|\S|=2p^2-2p+2+\epsilon$ it follows that $\frac{p(p+1)}{r}-x_Q\leq\epsilon-1$, a contradiction since $x_Q\leq \frac{p^2-p+2}{r}$.

So $r|p-1$, which implies that $r\leq \frac{p-1}{3}$; indeed recall that $r<\frac{p-1}{2}$. From Equation \eqref{rxq}, we get that $x_Q=\frac{p(p-1)}{r}-1$, or $x_Q=\frac{p(p-1)}{r}$, or $r=2$ and $x_Q=\frac{p(p-1)}{2}+1$.

If $x_Q=\frac{p(p-1)}{r}-1$, then counting the points in $\S$ on the lines through $Q$ yields that
\begin{align*}
	|\S| &\geq 1+x_Q+x\left(\frac{p-1}{r}\right)+2(p^2+1-x_Q-x)=2p^2+3+\frac{x(p-2r-1)}{r}-\left(\frac{p(p-1)}{r}-1\right)\\&\geq 2p^2-2p+4,
\end{align*}
since $x\geq p$. So if $\epsilon=1$, this immediately is a contradiction, and if $\epsilon=2$, this bound is sharp. So we find that, for $\epsilon=2$, there are precisely $p$ lines through $Q$ all of whose $\frac{p-1}{r}$ points, except for $Q$, belong to $K_{p-r}$, and that all other lines through $Q$ are $2$-or $3$-secants. By considering these 2-secants and 3-secants, we see that $|K_{p-1}|=|K_2|+1$, that $|K_1|+1=\frac{p(p-1)}{r}=|K_{p-r}|$ and that all colours $\mu\in\K\setminus\{1,2,p-1,p-r\}$ have $|K_\mu|=|K_{p-\mu+1}|$. 

 We note that $|K_{1}|=x_{Q}$, so by Lemma \ref{KvsX}(i) and the minimality of $x_{Q}$, we have that $x_{A}=x_{Q}=|K_{1}|$ for any $A\in K_{p-1}$. So, we can repeat the previous argument for any $A\in K_{p-1}$. In particular, we see that any line containing a point of $K_{p-r}$ and a point of $K_{p-1}$ contains precisely $\frac{p-1}{r}$ points of $K_{p-r}$ and no further points. Since $|K_2|\geq x_Q$ and $|K_{p-1}|=|K_2|+1$, we have that $|K_{p-1}|\geq \frac{p(p-1)}{r}$. Now consider a point $R\in K_{p-r}$, then we know that each of the $|K_{p-1}|$ points of $K_{p-1}$ determines a different line through $R$, each containing $\frac{p-1}{r}-1$ points of $K_{p-r}$, different from $R$. It follows that the number of points in $K_{p-r}$ is at least 
\begin{align*}
\frac{p(p-1)}{r}\left(\frac{p-1}{r}-1\right)+1
\end{align*}
But $|K_{p-r}|=|K_1|+1=\frac{p(p-1)}{r}$, a contradiction since $r\leq \frac{p-1}{2}$.

Now assume that $x_Q=\frac{p(p-1)}{r}$. By Lemma \ref{KvsX}(i) and the assumption on $Q$, we know that each of the $2r$ colours occurs at least $x_Q=\frac{p(p-1)}{r}$ times. Now consider a point $R\in K_{p-r}$. Suppose that $x_R\geq x_Q+5$, then $|K_r|\geq x_Q+5$ and we find that the number of points in $\S$ is at least $(2r-1)\frac{p(p-1)}{r}+\frac{p(p-1)}{r}+5=2p^{2}-2p+5$, a contradiction. Hence, $x_R-x_Q\leq 4$.

A point $T\in K_{p-1}$ does not lie on a $3$-secant to $\S$ through $R$. Furthermore, no $2$-secant through $R$ contains a point of $K_{p-1}$. Let $H'$ be the set of lines through $R$ that are not $2$-secants to $\S$ and contain, apart from $R$, at most one point not in $K_{p-1}$, contained in $K_s$.  Let $x'=|H'|$. We again find that
$$|\S|-|K_{p-1}|\geq 1+x_R+2(p^2+1-x_R-x').$$
Since $|K_{p-1}|\geq x_Q$, it follows that $x'\geq \frac{x_Q-x_R+2p-1}{2}\geq\frac{2p-5}{2}>0$.

Let $\ell$ be a line of $H'$. If $\ell\setminus\{R\}$ contains only points of $K_{p-1}$, then there are at least $p-r$ points in $\ell\setminus\{R\}$. If there is a point $N$ in $\ell\setminus\{R\}$ contained in $K_s$, $s\neq p-1$ then $p-r+y'(p-1)+s\equiv 0\pmod{p}$, where $y'$ is the number of points of $K_{p-1}$ on $\ell$. It follows that $s\equiv y'+r\pmod{p}$. Since $y'+r>r$, we have that $y'+r\geq p-r$, so $y'\geq p-2r$. Since $p-r\geq p-2r+1$, we conclude that every line of $H'$ contains at least $p-2r+1$ points of $\S$, different from $R$.

Counting the points of $\S$ per line through $R$ yields:
\begin{align*}
	|\S|&\geq 1+x_R+x'\cdot(p-2r+1)+2(p^2+1-x_R-x')=2p^{2}+3-x_R+x'\cdot(p-2r-1)\\
	&\geq 2p^{2}+3-x_R+\frac{x_Q-x_R+2p-1}{2}(p-2r-1)\\
	&=2p^{2}-2p+4-x_{Q}+\frac{x_Q-x_R+2p-1}{2}(p-2r+1)\:.
\end{align*}
Using $|\S|\leq 2p^2-2p+4$ leads to 
\begin{align}2x_Q\geq (p-2r+1)(x_Q-x_R+2p-1).\label{countend}\end{align}
Recall that $x_R-x_Q\leq 4$, so $x_Q-x_R+2p-1\geq 2p-5$. As $r\leq \frac{p-1}{3}$ also $p-2r+1\geq \frac{p+5}{3}$.

Thus, $2x_Q\geq\frac{p+5}{3}(2p-5)$. But $2x_Q=  \frac{2p(p-1)}{r}\leq \frac{p(p-1)}{2}$ if $r\geq 4$, which leads to a contradiction since $p\geq 5$. Furthermore, if $r=3$, then Equation \eqref{countend} gives that
$\frac{2p(p-1)}{3}\geq (p-5)(2p-5)$, a contradiction if $p\geq11$, and if $r=2$, we find that
$\frac{2p(p-1)}{2}\geq (p-3)(2p-5)$, a contradiction if $p\geq 11$.  Note that for $p=7$, the case $r=3$ does not occur since we assumed that $r<\frac{p-1}{2}$.  
We now deal with the case $r=2$ and $p=7$. We have $x_Q=21$ and $R\in K_{5}$. Note that every $3$-secant through $R$ necessarily contains two points of $K_1$  (since the colours are $1,6,2,5$), so $y_{R}\leq\frac{|K_{1}|}{2}$. From Equation \eqref{eq1}, it follows that
\begin{align*}
	63&\leq y_{R}+2x_{R}\leq \frac{|K_{1}|}{2}+2x_{R}= \frac{|\S|-|K_{2}|-|K_{5}|-|K_{6}|}{2}+2x_{R}\\
	&\leq \frac{88-x_{R}-2x_{Q}}{2}+2x_{R}=44-x_{Q}+\frac{3x_{R}}{2}\\
	&\leq 44+\frac{3}{2}\cdot 4+\frac{x_{Q}}{2}=60+\frac{1}{2},
\end{align*}
a contradiction.

Finally, assume that $r=2$ and $x_Q=\frac{p(p-1)}{r}+1=\frac{p(p-1)}{r}+1$. By Lemma \ref{KvsX}(i) and the assumption on $Q$, we know that each of the $4$ colours $1,p-1,2,p-2$ occurs at least $x_Q=\frac{p(p-1)}{2}+1$ times. Since $|\S|=4x_{Q}$, we find a contradiction if $\epsilon=1$ and that each colour occurs exactly $\frac{p(p-1)}{2}+1$ times if $\epsilon=2$. We immediately also have that $x_{A}=\frac{p(p-1)}{2}+1$ for any point $A\in\S$. Now consider a point $R\in K_{p-2}$.

A point in $K_{p-1}$ does not lie on a 2-secant or 3-secant to $\S$ through $R$, since $r+1$ is not a colour. Let $H''$ be the set of lines through $R$ that are not $2$-secants to $\S$ and contain, apart from $R$, at most one point not in $K_{p-1}$.  Let $x''=|H''|$. We again find that
\[
	|\S|-|K_{p-1}|\geq 1+x_R+2(p^2+1-x_R-x'').
\]
Since $|K_{p-1}|=x_R$, it follows that $x''\geq \frac{2p-1}{2}$, hence $x''\geq p>0$. 
Let $\ell$ be a line of $H'$. If $\ell\setminus\{R\}$ contains only points of $K_{p-1}$, then there are at least $p-2$ points in $\ell\setminus\{R\}$. 
If there is a point $N$ in $\ell\setminus\{R\}$ contained in $K_s$, $s\neq p-1$, then either $N\in K_2$ and there are at least $p$ points of $K_{p-1}$ in $\ell\setminus\{R\}$, or $N\in K_{p-2}$ and there are at least $p-4$ points of $K_{p-1}$ in $\ell\setminus\{R\}$. We conclude that every line of $H''$ contains at least $p-3$ points of $\S$, different from $R$.

Counting the points of $\S$ per line through $R$ yields:
\begin{align*}
	|\S|&\geq 1+x_R+x''\cdot(p-3)+2(p^2+1-x_R-x'')=2p^{2}+3-x_R+x''\cdot(p-5)\\
	&\geq 2p^{2}+3-x_R+p(p-5)
\end{align*}
Using $|\S|=2p^2-2p+4$ leads to 
\begin{align*}
	x_R\geq p(p-5)+2p-1=p^{2}-3p-1,
\end{align*}
a contradiction since $x_{R}=\frac{p(p-1)}{2}+1$ and $p\geq7$.
\end{proof}

The Main Theorem (see page \pageref{mt:maintheorem}) now directly follows from Corollary \ref{2coloursnieuw} and Theorem \ref{final}. Corollary \ref{cor:main} then follows from Corollary \ref{cordesmin}, the Main Theorem and Theorem \ref{noembeddedantipodal}.

\end{document}